\newenvironment{varalgorithm}[1]
  {\algorithm}
  {\endalgorithm}
\newcommand{\subscript}[2]{$#1 _ #2$}
\crefname{subsection}{subsection}{subsections}
\newtheorem{theorem}{Theorem}[section]
\newtheorem{lemma}[theorem]{Lemma}
\newtheorem{proposition}[theorem]{Proposition}
\newtheorem{claim}[theorem]{Claim}
\newcommand\eps{\varepsilon}
\newcommand{\E}{\mathbb E}
\newcommand{\Bin}{\mathrm{Bin}}
\newcommand{\Nn}{{\mathbb N}}
\newcommand{\scr}{\mathcal}
\newcommand{\mb}{\mathbb}
\newcommand{\ex}{\mathrm{ex}}
\theoremstyle{definition}
\newtheorem{remark}{Remark}
\def\ex{{\mathbb E}}
\def\W{{\mathcal W}}
\def\T{{\mathcal T}}
\newcommand{\YY}{{\mathcal Y}}
\newcommand{\card}[1]{\left|{#1}\right|}
\newcommand{\tp}{{t+1}}
\newcommand{\ip}{{i+1}}
\newcommand{\set}[1]{\left\{#1\right\}}
\title[Building Hamiltonian Cycles in the Semi-Random Graph Process]{Building Hamiltonian Cycles in the Semi-Random Graph Process in Less Than $2n$ Rounds}
\author{Alan Frieze}
\address{Department of Mathematical Sciences, Carnegie Mellon University, Pittsburgh PA 15213, USA}
\email{alan@random.math.cmu.edu}
\author{Pu Gao}
\address{Department of Combinatorics and Optimization, University of Waterloo, Waterloo, Canada}
\email{pu.gao@uwaterloo.ca}
\author{Calum MacRury}
\address{Graduate School of Business, Columbia University, New York City, USA}
\email{cm4379@columbia.edu}
\author{Pawe\l{} Pra\l{}at}
\address{Department of Mathematics, Toronto Metropolitan University, Toronto, Canada}
\email{pralat@torontomu.ca}
\author{Gregory B.\ Sorkin}
\address{Department of Mathematics, The London School of Economics and Political Science, Houghton Street, London WC2A 2AE, England}
\email{g.b.sorkin@lse.ac.uk}
\date{}
\begin{document}

\maketitle

\begin{abstract}
The semi-random graph process is an adaptive random graph process in which an online algorithm is initially presented an empty graph on $n$ vertices. In each round, a vertex $u$ is presented to the algorithm independently and uniformly at random. The algorithm then adaptively selects a vertex $v$, and adds the edge $uv$ to the graph. For a given graph property, the objective of the algorithm is to force the graph to satisfy this property asymptotically almost surely in as few rounds as possible.

We focus on the property of Hamiltonicity. We present an adaptive strategy which creates a Hamiltonian cycle in $\alpha n$ rounds, where $\alpha < 1.81696$ is derived from the solution to a system of differential equations. We also show that achieving Hamiltonicity requires at least $\beta n$ rounds, where $\beta > 1.26575$. 
\end{abstract}

\section{Introduction and Main Results}

The \textit{semi-random graph process} was suggested by Peleg Michaeli, introduced formally in 2020~\cite{ben2020semi}, and studied recently~\cite{ben-eliezer_fast_2020,gao2022perfect,behague2022,burova2022semi,macrury2022sharp,koerts2022k,molloy2024perfect,behague2024creating}, especially in the context of Hamiltonian cycles~\cite{gao2020hamilton,gao2022fully,frieze2022hamilton,molloy2024perfect}. It is an example of an \textit{adaptive} random graph process, in that an \textit{algorithm} has partial control over which random edges are added in each step. Specifically, the algorithm begins with the empty graph $G_0$ on vertex set $[n] = \{1, \ldots, n\}$, and in each \textit{step} (or round) $t \in \mb{N}$, a vertex $u_t$ is chosen independently and uniformly at random (u.a.r.) from $[n]$. The algorithm is \textit{online}, in that it is given the graph $G_{t-1}$ and vertex $u_t$ and must select a vertex $v_t \in [n] \setminus \{u_t\}$ and add the edge $(u_t,v_t)$ to $G_{t-1}$ to form $G_{t}$. Thus, it decides on $v_t$ without full knowledge of which vertices will be randomly drawn in the future. In this paper, the goal of the online algorithm is to build a multigraph satisfying a given graph property $\scr{P}$ as quickly as possible.  Clearly, if the online algorithm chooses $v_t$ u.a.r.\ for $m \ge 1$ consecutive rounds, then this is the Erdős–Rényi random graph process with multi-edges. The main focus in the literature is understanding how through intelligent decision-making, the online algorithm can speed up the appearance of certain graph properties $\scr{P}$.

While the semi-random graph process has been studied extensively in recent years, discrete processes in which an algorithm has partial control over its random steps have been studied previously. One of the first such examples is the work of Azar et al.~\cite{azar_balanced} in the context of the bin-packing problem. By allowing an algorithm a small amount of \textit{adaptivity}, Azar et al.~proved that the maximum load on any bin can be reduced by an exponential factor in comparison to a purely random (and non-adaptive) strategy. This phenomena has since been described as the ``power of two choices''. 

The \textit{Achlioptas process} is another example of an adaptive random process; it was proposed by Dimitris Achlioptas and first formally studied in \cite{bohman2001avoiding}. The Achlioptas process, too, begins with the empty graph $G_0$ on vertex set $[n]$. In each round $t \in \mb{N}$, the online algorithm is presented two distinct edges $e^{1}_t, e^{2}_t$ drawn u.a.r. from the edges on vertex set $[n]$ that were \textit{not} previously chosen (i.e., not in $G_{t-1}$).  The online algorithm then chooses precisely one of $e^{1}_t, e^{2}_t$, and then adds it to $G_{t-1}$, yielding the graph $G_{t}$. In contrast to the semi-random process, the objective first considered in~\cite{bohman2001avoiding} is to \textit{delay} the construction of a graph satisfying a property $\scr{P}$ for as many rounds as possible. Achlioptas asked what can be done if $\scr{P}$ corresponds to the existence of a giant component (i.e., a connected component of size $\Omega(n)$). Bohman and Frieze analyzed a greedy strategy which does \textit{not} build a giant component for $0.535 n$ rounds, which is strictly larger than the threshold at which a giant component appears in the Erdős–Rényi random graph process \cite{erdHos1960evolution}. Improvements have since been made on determining the optimal algorithm for delaying the appearance of a giant component. The best known lower bound of $0.829n$ is due to Spencer and Wormald~\cite{spencer_birth}, and the best known upper bound of $0.944n$ is due to Cobârzan~\cite{cob_2014}.  Numerous works have studied other properties and extensions of the Achlioptas process. Most related to our work is~\cite{krivelevich2010hamiltonicity}, whose goal is to force $G_t$ to be Hamiltonian as \textit{quickly} as possible. We refer the reader to the introduction of~\cite{Fraiman2022OnTP} for an in-depth overview of the literature on the Achlioptas process.

\subsection{Definitions} 

We now introduce some definitions and notation for the semi-random graph process. We formalize an online algorithm using a \textit{strategy} $\scr{S}$. A strategy $\scr{S}$ specifies for each $n \ge 1$,  a sequence of functions $(s_{t})_{t=1}^{\infty}$, where for each $t \in \mb{N}$, $s_t(u_1,v_1,\ldots, u_{t-1},v_{t-1},u_t)$ is a distribution on $[n] \setminus \{u_t\}$ which depends on the vertex $u_t$, and the \textit{history} of the process up until step $t-1$ (i.e., $u_1,v_1,\ldots, u_{t-1},v_{t-1}$). Then, $v_t$ is chosen according to this distribution, and $(u_t,v_t)$ is added to $G_{t-1}^{\scr{S}}(n)$, the multigraph constructed by $\scr{S}$ after the first $t-1$ steps. If $s_t$ is an atomic distribution, then $v_t$ is determined by $u_1,v_1, \ldots ,u_{t-1},v_{t-1},u_t$. We denote by $(G_{i}^{\scr{S}}(n))_{i=0}^{t}$ the sequence of random multigraphs obtained by following the strategy $\scr{S}$ for $t$ steps, and we shorten $G_{t}^{\scr{S}}(n)$ to $G_t$ or $G_{t}(n)$ when clear. 

Suppose $\scr{P}$ is an \textit{increasing} (i.e., \textit{monotone}) graph property. Given a strategy $\scr{S}$ and a constant $0<q<1$, let $m_{\scr{P}}(\scr{S},q,n)$ be the minimum $t \ge 0$ for which $\mb{P}[G_{t} \in \scr{P}] \ge q$, where $m_{\scr{P}}(\scr{S},q,n):= \infty$ if no such $t$ exists. Define
\[
m_{\scr{P}}(q,n) = \inf_{ \scr{S}} m_{\scr{P}}( \scr{S},q,n),
\]
where the infimum is over all strategies on $[n]$. As $\scr{P}$ is increasing, for each $n \ge 1$, if $0 \le q_{1} \le q_{2} \le 1$, then $m_{\scr{P}}(q_1,n) \le m_{\scr{P}}(q_2,n)$. Thus, the function $q \mapsto \limsup_{n\to\infty} m_{\scr{P}}(q,n)$ is non-decreasing, so the limit
\begin{align} \label{Cdef}
C_{\scr{P}}:=\lim_{q\to 1^-}\limsup_{n\to\infty} \frac{m_{\scr{P}}(q,n) }{n}
\end{align}
is guaranteed to exist. The goal is typically to compute upper and lower bounds on $C_{\scr{P}}$ for various properties $\scr{P}$.

\begin{remark}
Note that although $C_{{\mathcal P}}$ is well defined for all increasing properties, it only gives useful information if $C_{{\mathcal P}}$ is not equal to $0$ or $\infty$, i.e., if a linear number (in $n$) of steps is necessary and sufficient to construct some graph in ${\mathcal P}$. If $C_{{\mathcal P}}$ is equal to 0 or $\infty$ then the property is not linear and the definition \eqref{Cdef} should be adapted, scaling $m_{\scr{P}}(q,n)$ by an appropriate function of $n$ rather than $n$ itself in the denominator.
\end{remark}

\subsection{Main Results: Upper Bound} 

In this paper, we focus on the property of having a Hamiltonian cycle, which we denote by ${\tt HAM}$. It was shown in~\cite{bohman2009hamilton} that the the well-known $3$-out process generates a random graph that is Hamiltonian a.a.s.\ (asymptotically almost surely, i.e., with probability tending to 1 as $n \to \infty$). The very first paper on the semi-random process, ~\cite{ben2020semi}, showed that it can simulate the $k$-out process, from which it follows that $C_{\tt HAM} \le 3$. A new upper bound was obtained in~\cite{gao2020hamilton} in terms of an optimal solution to an optimization problem whose value is believed to be at most $2.61135$ by numerical support. 

The upper bound $C_{\tt HAM} \leq 3$ obtained by simulating the $3$-out process is \textit{non-adaptive}. That is, the strategy does {not} depend on the history of the semi-random process. The improvement in~\cite{gao2020hamilton} is adaptive but in a weak sense. The strategy consists of four phases, each lasting a linear number of rounds, and the strategy is adjusted {only} at the end of each phase: for example, the algorithm might identify vertices of low degree, and then focus on them during the next phase. 

In the proceedings version~\cite{gao2022fully} of this paper, a fully adaptive strategy was proposed: at every step $t$, it pays attention to $G_{t-1}$ and $u_t$. As expected, such a strategy creates a Hamiltonian cycle substantially faster, and it improves the upper bound from $2.61135$ to $2.01678$. A neat improvement in~\cite{frieze2022hamilton} brings the upper bound down to $1.84887$. In this paper, we combine all the ideas together to reduce it further, to $1.81701$.

\begin{theorem}\label{thm:main_upper_bound}
$C_{\texttt{HAM}} \le \alpha \le 1.81701$, where $\alpha$ is derived from a system of differential equations. 
\end{theorem}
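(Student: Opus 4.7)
The plan is to construct a fully adaptive strategy $\scr{S}$ operating in a sequence of consecutive phases, and to show that by the end of the last phase---at step $\alpha n$---the resulting multigraph $G_{\alpha n}^{\scr{S}}$ is Hamiltonian a.a.s. The value $\alpha$ will emerge as the sum of the durations of the phases, each of which is determined by solving a system of ordinary differential equations governing the evolution of a carefully chosen vector of statistics of $G_t$, and then optimizing over how the phases are glued together.

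The first phase focuses on raising the minimum degree, in the spirit of simulating a portion of a $k$-out-like process. Concretely, while sufficiently many vertices still have ``low'' degree, whenever the random vertex $u_t$ is presented the algorithm picks $v_t$ from the current low-degree set, nearly uniformly. We track the number $Y_k(t)$ of vertices of degree exactly $k$ for each small $k$ using Wormald's differential equation method, producing in a linear number of rounds a graph whose degree sequence supports the next phase. The middle phases then build, and progressively merge, a collection of vertex-disjoint paths so as to reduce the number of path components as fast as possible: the adaptive rule at each step is to attach $u_t$ to the endpoint of a short path, or, when $u_t$ itself is an endpoint, to merge it with another endpoint. The sizes and counts of these path structures again evolve according to a tractable system of ODEs, and the duration of each phase is read off from the time at which the corresponding trajectory hits a prescribed threshold.

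The final phase closes the remaining structure into a Hamiltonian cycle. Building on~\cite{gao2020hamilton} and the refinement in~\cite{frieze2022hamilton}, once the algorithm has reduced the problem to completing a near-Hamiltonian structure on a graph of sufficiently large minimum degree with enough ``fresh'' random choices still to come, one applies a Pósa-style rotation--extension argument---or, equivalently, invokes Hamiltonicity of an auxiliary random graph built from the remaining random vertices---to show that a Hamiltonian cycle is formed a.a.s.\ in a further linear number of rounds. The thresholds used at the transitions between phases are tuned so that the combined duration $\alpha$ is as small as possible.

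The main obstacle is twofold. First, the ODE system arising from combining the low-degree-saturation rule with the path-building and path-merging rules is genuinely multi-dimensional, so extracting the explicit numerical bound $\alpha \le 1.81701$ requires a careful (largely numerical) analysis together with an optimization over the transition points between the phases; this is where the gain over the previous bounds of $2.01678$ in~\cite{gao2022fully} and $1.84887$ in~\cite{frieze2022hamilton} comes from. Second, one must verify the hypotheses of Wormald's method (boundedness, Lipschitz continuity, and the trend hypothesis) for each adaptive rule, and separately handle the endgame, where the adaptive strategy has to be coupled with a probabilistic Hamiltonicity argument to guarantee that the cycle is actually completed a.a.s.\ by step $\alpha n$.
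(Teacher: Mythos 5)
Your proposal outlines a different algorithm from the one the paper actually analyzes, and the differences are not cosmetic. The paper's strategy does \emph{not} begin with a ``raise the minimum degree'' phase tracking degree counts $Y_k(t)$, does not build and merge a forest of vertex-disjoint paths, and does not invoke a P\'osa rotation--extension argument in the endgame. Instead it maintains, from the very first round, a \emph{single} growing path $P_t$, a set $\YY_t$ of vertex-disjoint edges outside $P_t$, and a set of coloured (red/blue/magenta) vertices on $P_t$. When $u_t$ lands outside $P_t$, a greedy $\YY$-extension or path extension is made; when $u_t$ lands on $P_t$, the algorithm cannot extend the path immediately, so it instead ``pre-places'' a coloured edge so that a later square landing next to that coloured vertex triggers a \emph{path augmentation}: a vertex (or edge of $\YY_t$) is spliced into the interior of $P$. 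This mechanism---pre-marking interior vertices of $P$ so that future random arrivals can lengthen $P$ even though $u_t \in V(P_t)$---is the central idea that drives the bound below $2n$, and it is entirely absent from your plan. The ``degree-greedy'' in the paper refers to choosing $v_t$ among unsaturated vertices of minimum \emph{blue} degree (to limit collateral damage when blue edges are later destroyed), not to targeting low-degree vertices to raise the minimum degree.

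Moreover, your endgame is wrong in kind, not just in detail: the paper's clean-up (Lemma~\ref{lem:clean_up}) is a direct, self-contained construction that absorbs the remaining $o(n)$ vertices by iterated augmentations and then closes the Hamiltonian path into a cycle by directing $\Theta(n^{1/2}\log n)$ semi-random edges at each endpoint; no rotation--extension lemma is needed or used. Finally, the claimed bound $\alpha \le 1.81701$ comes from a very specific, high-dimensional ODE system, Equations~\eqref{eqn:greedy_x_and_y}--\eqref{eqn:greedy_de_max_type} followed by \eqref{ode1}--\eqref{ode3}, whose state vector tracks $|V(P_t)|$, $|V(\YY_t)|$, and the counts of coloured vertices by type---not degree counts or path-component counts. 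A plan built around degree saturation plus path merging would require its own concrete ODE system and its own proof that the trajectory terminates before time $1.81701$; as written there is no reason to believe it would reproduce this bound, and it would certainly need the path-augmentation idea to have any hope of beating $2n$. In short, the proposal reads as a hybrid of the earlier four-phase strategy of~\cite{gao2020hamilton} and a generic Hamiltonicity endgame, not as a proof of the theorem via the paper's strategy or any demonstrably equivalent one; the key inventive step is missing.
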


The numerical results presented in this paper were obtained using the Julia programming language~\cite{Julia}. We would like to thank Bogumi\l{} Kami\'nski from SGH Warsaw School of Economics for helping us to implement it. The program is available on-line.\footnote{\texttt{https://math.torontomu.ca/\~{}pralat/}} 

\subsection{Main Results: Lower Bound}

Let us now turn to the lower bound. As observed in~\cite{ben2020semi}, if $G_t$ has a Hamiltonian cycle, then $G_t$ has minimum degree at least 2; thus $C_{\tt HAM} \ge  C_{\scr{P}}  = \ln 2+ \ln(1+\ln2) \ge 1.21973$, where $\scr{P}$ is the property of having minimum degree~$2$. In \cite{gao2020hamilton}, this was shown to not be tight: it was increased by a numerically negligible $10^{-8}$. By investigating some specific structures generated by the semi-random process, containing many edges that cannot simultaneously belong to a Hamiltonian cycle, we improve the lower bound of $\ln 2+ \ln(1+\ln2) \ge 1.21973$ to $1.26575$. (This bound was already reported in the proceedings version~\cite{gao2022fully} of this paper.) This is a much stronger bound than that in~\cite{gao2020hamilton}, the structures exploited are different, and the proof is simpler.

\begin{theorem}\label{thm:main_lower_bound}
Let $f(s)=2+e^{-3s}(s+1)\left(1-\frac{s^2}{2}-\frac{s^3}{3}-\frac{s^4}{8}\right)+e^{-2s}\left(2s+\frac{5s^2}{2}+\frac{s^3}{2}\right)-e^{-s}\left(3+2s\right)$, and let $\beta\approx 1.26575$ be the positive root of $f(s)-1=0$. Then, $C_{\texttt{HAM}} \ge \beta$.
\end{theorem}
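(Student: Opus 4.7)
The plan is to fix $s < \beta$ and an arbitrary strategy $\scr{S}$, run the semi-random process for $m = \lfloor sn \rfloor$ rounds, and show that $\Prob[G_{m}^{\scr{S}} \in {\tt HAM}]$ is bounded away from $1$ as $n \to \infty$. The starting point is a decoupling: the presented vertices $u_1,\ldots,u_m$ are sampled independently of $\scr{S}$, so the in-degrees $d_v^{\mathrm{in}} := |\{t \le m : u_t = v\}|$ form a strategy-independent multinomial vector whose marginals are asymptotically $\mathrm{Poisson}(s)$. The strategy controls only the out-degrees $d_v^{\mathrm{out}} := |\{t \le m : v_t = v\}|$, subject to $\sum_v d_v^{\mathrm{out}} = m$. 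Any extra out-edge placed at a vertex of already comfortable degree is wasted, which is what eventually drives the lower bound.

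The core of the argument is to catalog local configurations whose edges cannot all lie in a common Hamiltonian cycle. The simplest example is a vertex $v$ with three degree-$2$ neighbours $w_1,w_2,w_3$: each edge $vw_i$ is forced into the cycle, yet only two edges at $v$ are available. A second example is a short cycle supported on degree-$2$ vertices, which is an isolated component. I would enumerate the minimal such obstructions, classified by how many of their vertices are \emph{passive} (have in-degree $0$). The $e^{-s}$, $e^{-2s}$, $e^{-3s}$ factors in $f(s)$ correspond respectively to configurations with one, two, and three passive vertices, while the polynomial coefficients encode the $\mathrm{Poisson}(s)$ weights on the remaining in-degrees and the combinatorics of how the configuration can be realised around the passive vertices.

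With the catalog in place, the next step is a strategy-robust count. For each configuration type I would identify which out-edges the strategy is forced to commit in order to destroy it. Because only $m = sn$ out-edges are ever available, and these must already repair min-degree deficiencies at the $\Theta(ne^{-s})$ passive vertices, a budget/linear-programming argument should yield, for \emph{every} strategy $\scr{S}$,
\[
\E\!\left[ N(G_{m}^{\scr{S}}) \right] \;\ge\; \bigl(1 - f(s)\bigr)\, n - o(n),
\]
where $N(G)$ counts the obstructions present in $G$. The obstructions are local, hence have short-range dependence even under an adaptive strategy (each round influences only a bounded neighbourhood), so a second-moment / concentration argument can promote the positive expectation into the statement that $N(G_m^{\scr{S}}) \ge 1$ with probability tending to~$1$ whenever $f(s) < 1$, i.e.\ $s < \beta$.

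The main obstacle is precisely the adaptivity of $\scr{S}$: the strategy can react to the revealed history in order to suppress the count of any particular configuration type, and so no naive per-type bound survives the infimum over strategies. The role of the explicit form of $f(s)$ is exactly to package the trade-offs: reducing one term (say the $e^{-3s}$ contribution, by refusing to place certain triples of out-edges at passive vertices) necessarily inflates another (say the $e^{-s}$ or $e^{-2s}$ terms, because the displaced out-edges must land on vertices whose neighbourhoods then become obstructing). Carrying out this case analysis so that the worst-case strategy still produces at least $(1-f(s))n$ obstructions, and verifying that the resulting algebraic expression is exactly the stated $f$, is where the bulk of the technical work should concentrate.
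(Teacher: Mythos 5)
Your high-level instinct --- lower-bound $C_{\tt HAM}$ by counting local structures in $G_{sn}^{\scr{S}}$ whose edges cannot all be used by a Hamiltonian cycle, and show that for $s<\beta$ such structures are plentiful no matter what $\scr{S}$ does --- is the right one, and is the paper's starting point. But the specific structures you catalog and, more importantly, your plan for making the count strategy-robust, diverge substantially from the paper and contain a real gap.

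The configurations you propose (a vertex with three forced incident cycle-edges, an isolated short cycle of degree-$2$ vertices) are defined by the degree sequence of $G_{sn}^{\scr{S}}$, which the strategy directly controls: it decides exactly where all $\lfloor sn\rfloor$ circles go. Your ``budget/linear-programming argument'' would indeed have to show that every $\scr{S}$ leaves $(1-f(s))n$ obstructions behind, but you leave this as a plan, and it is precisely the hard part. Your interpretation of $f$ also does not match: you read $e^{-s},e^{-2s},e^{-3s}$ as counting passive (in-degree $0$) vertices, but in the paper these factors arise from conditions of the form ``no further random vertex lands on $x$ after round $i$,'' applied to vertices that have in-degree $1$ or $2$, not $0$.

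The paper avoids the optimization-over-strategies entirely by a different construction. Call the random arrival $u_i$ a square and the strategy's choice $v_i$ a circle. Every edge of a Hamiltonian cycle must use a distinct square, and at most two squares per vertex can be used; moreover, if $v_i$ receives two more squares \emph{after} round $i$, then the edge $(u_i,v_i)$ is wasted. The paper defines $Z$ (squares truncated at two per vertex), then sets $\W_1,\W_2$ to be pairs $(x,y)$ where $x$ gets a square at round $i$, $y=v_i$ is the corresponding circle, $x$ gets no (or exactly one) further square, and $y$ gets at least two further squares \emph{after} round $i$, with $\T_1,\T_2$ correcting the double-counting. The crucial point you miss is why these counts are strategy-independent: although $y$ is chosen by $\scr{S}$, the disqualifying condition ``two more squares land on $y$ after round $i$'' depends only on the arrivals $u_{i+1},\dots,u_t$, which are i.i.d.\ and unknown to $\scr{S}$ when it commits to $y$. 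So the probability that a given circle-placement turns out to be wasted is the same for all $\mu$-well-behaved strategies, and $\E|\W_1|$, $\E|\W_2|$, etc.\ can be computed exactly via the iterated integrals that yield $f$. Concentration is then a routine second-moment calculation, and the only strategy-dependence to tame is the passage to $\mu$-well-behaved strategies, handled by reduction to approximate Hamiltonian cycles as in the perfect-matching paper. Without an analogue of this ``wastefulness is determined by the future'' observation, your proposal does not have a path to an infimum over all adaptive strategies, and you should not expect a budget argument on your degree-based obstructions to produce the stated $f$.
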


\subsection{Further Related Works} 

The seminal paper~\cite{ben2020semi} showed that the semi-random graph process is general enough to simulate several well-studied random graph models by using appropriate strategies. In the same paper, the process was studied for various natural properties such as having minimum degree $k \in \Nn$ or having a fixed graph $H$ as a subgraph. In particular, it was shown that \emph{a.a.s.}\ one can construct $H$ in less than $n^{(d-1)/d} \omega$ rounds where $d \ge 2$ is the degeneracy of $G$ and $\omega = \omega(n)$ is any function that tends to infinity as $n \to \infty$. This property was recently revisited in~\cite{behague2022}, where a conjecture from~\cite{ben2020semi} was proven for any graph $H$: \emph{a.a.s.}\ it takes at least $n^{(d-1)/d} / \omega$ rounds to create $H$. The property of having cliques of order tending to infinity as $n\to \infty$ was investigated in~\cite{gamarnik2023cliques}. In \cite{koerts2022k}, $k$-factors and $k$-connectivity were studied. 

Another property studied in the context of semi-random processes is that of having a perfect matching, which we denote by {\tt PM}. Since the $2$-out process has a perfect matching \emph{a.a.s.}~\cite{WALKUP1980}, and the semi-random process can simulate the 2-out process, we immediately get that $C_{\texttt{PM}} \le 2$. By simulating the semi-random process with another random graph process known to have a perfect matching \emph{a.a.s.}~\cite{pittel}, the bound can be improved to $1+2/e < 1.73576$~\cite{ben2020semi}. This bound was recently improved by investigating another fully adaptive algorithm~\cite{gao2022perfect}, giving the current best bound of $C_{\texttt{PM}} < 1.20524$. The same paper improves the lower bound observed in~\cite{ben2020semi} of $C_{\texttt{PM}} \ge \ln(2) > 0.69314$ to  $C_{\texttt{PM}} > 0.93261$. While the optimal value of $C_{\texttt{PM}}$ remains unknown, a general purpose theorem is proven in~\cite{macrury2022sharp} that identifies a sufficient condition for a property $\scr{P}$ to have a sharp threshold. When $\scr{P}$ is  $\texttt{HAM}$ or $\texttt{PM}$, \cite{macrury2022sharp} uses the theorem to establish the existence of sharp thresholds for these properties.

Let us now discuss what is known about the property of containing a given spanning graph $H$ as a subgraph. It was asked by Noga Alon whether for any bounded-degree $H$, one can construct a copy of $H$ \emph{a.a.s.}\ in $O(n)$ rounds.  This question was answered positively in a strong sense in~\cite{ben-eliezer_fast_2020}, in which it was shown that any graph with maximum degree $\Delta$ can be constructed \emph{a.a.s.}\ in $(3\Delta/2+o(\Delta))n$ rounds and, if $\Delta = \omega (\log(n))$, in $(\Delta/2 + o(\Delta))n$ rounds. Note that these upper bounds are asymptotic in $\Delta$; when $\Delta$ is constant in $n$, such as for perfect matchings and Hamiltonian cycles, they give no concrete bound.

Other adaptive random graph processes and variants of the semi-random graph process have been considered in the literature. The \textit{semi-random tree process} is introduced in~\cite{burova2022semi}, where in each round, a random spanning tree of $K_n$ is presented to the algorithm, who chooses one of the edges to keep.  In~\cite{Harjas}, $k$ random vertices rather than just one are offered, and the algorithm chooses one of them before creating an edge. In~\cite{macrury2022sharp}, a general definition of an \textit{adaptive random graph process} is proposed. By parameterizing it appropriately, one recovers the Achlioptas process, the semi-random graph process, as well as the models of~\cite{burova2022semi} and~\cite{Harjas}. In~\cite{gilboa2021semi}, the vertices offered by the process follow a random permutation. Finally, hypergraphs are investigated in~\cite{behague2022,molloy2024perfect,behague2024creating}.

\section{Proof of Theorem \ref{thm:main_upper_bound}}

\subsection{Algorithmic Preliminaries} \label{sec:preliminaries}

In this section, we introduce some notation/terminology as well as the basic ideas used in the design of all of our strategies. We say that vertex $x \in [n]$ is \textit{covered} by $u_t$ arriving at round $t$, or that $u_t$ \textit{lands} on $x$, provided $u_t = x$. The main ingredient for proving Theorem~\ref{thm:main_upper_bound} is to specify a strategy which keeps ``extending'' or ``augmenting'' a path $P$ -- as will be explained momentarily -- as well as building a collection $\YY$ of edges, all vertex-disjoint from one another and~$P$, until all edges in $\YY$ are joined to $P$ and $P$ becomes Hamiltonian. Then, with a few more steps (just $o(n)$), the Hamiltonian path $P$ can be completed into a Hamiltonian cycle. 

Suppose that after $t \ge 0$ steps, we have constructed the graph $G_t$  which contains the path $P_t$ and the collection $\YY_t$ of disjoint edges. Let $V(P_t)$ and $V(\YY_t)$ denote respectively the vertices in $P_t$ and in $\YY_t$, and $U_t$ the vertices in neither. Denote the (induced) distance between vertices $x, y \in V(P_t)$ on the path $P_t$ by $d_{P_t}(x,y)$. We also define $d_{P_t}(x,Q) := \min_{q \in Q} d_{P_t}(x,q)$, for $x\in V(P_t)$ and $Q \subseteq V(P_t)$. In step $\tp$,   $u_{\tp} \in U_t, V(\YY_t)$, or $V(P_t)$:
\begin{itemize}
\item
If $u_\tp \in U_t$, we extend the collection $\YY_t$ by choosing $v_\tp$ to be a different vertex in $U_t$ and adding $u_\tp v_\tp$ to $\YY_t$. (If $\card{U_t}=1$, we simply ``pass'' on the round, choosing $v_\tp$ arbitrarily and not using $u_\tp v_\tp$ for the construction of the Hamiltonian cycle. However, this is unlikely to happen until the very end of the process, when we apply a different strategy; see Section~\ref{sec:clean_up}.) We call such a move a \textit{(greedy) $\YY$-extension}.
\item 
If $u_\tp \in V(\YY_t)$, an extension of $P_t$ can be made by appending the $\YY_t$ edge on $u_\tp$ to an end of $P_t$, and  deleting it from $\YY_t$. Such a move is called a \textit{(greedy) path extension}.  
\item
If $u_\tp \in V(P_t)$ we cannot perform a greedy path extension, but we can still choose $v_\tp$ in a way that will help us extend the path in future rounds. Specifically, choose $v_\tp \in U_{t} \cup V(\YY_t)$ to be an isolated vertex or an edge endpoint; it could be chosen uniformly at random, though we will use a more efficient strategy. Consider a future round $i>t$ where $u_\ip$ happens to be a path neighbour of $u_\tp$ (i.e., $d_{P_i}(u_\tp,u_\ip) =1$). In this case, if $v_\tp \in U_i$ is an isolated vertex, set $v_\ip = v_\tp$, and replace the path edge $\set{u_\tp,u_{i+1}}$ with the length-2 path $(u_\tp,v_\tp= v_\ip , u_\ip)$, thus making $P$ one edge longer. If $v_\tp \in V(\YY_i)$ belongs to an isolated edge, choose $v_\ip$ to be its neighbour, and replace the path edge $\set{u_\tp, u_\ip}$ with the length-3 path $(u_\tp, v_\tp, v_{i+1}, u_{i+1})$, making $P$ two edges longer. Call either of these cases a \textit{path augmentation}.
\end{itemize}

\subsection{Proof Overview} \label{sec:upper_bound_overview}

In order to prove Theorem~\ref{thm:main_upper_bound}, we analyze a strategy which proceeds in three distinct \textit{stages}. In the first stage, we execute \ref{alg:degree_greedy}, an algorithm which makes greedy $\YY$-extensions and path extensions whenever possible, and otherwise sets up path augmentation operations for future rounds in a degree-greedy manner. During the execution of\ \ \ref{alg:degree_greedy} some edges are coloured red or blue to help keep track of when these augmentations can be made. We use two colours, namely red and blue, to distinguish between edges which are added randomly (red) and greedily (blue). In step $\tp$, $v_\tp$ is chosen amongst $U_t\cup V(\YY_t)$ that are incident with the least number of blue edges. This degree-greedy decision is done to minimize the number of coloured vertices which are destroyed when path augmentations and extensions are made in later rounds. This stage lasts for $N$ \textit{phases}, where $N$ is any non-negative integer that may be viewed as the parameter of the algorithm (here a phase is a contiguous set of steps shorter than the full stage). For the claimed (numerical) upper bound of Theorem~\ref{thm:main_upper_bound}, $N$ is set to $100$. Setting smaller values of the parameter $N$---in particular, setting $N=0$---yields an algorithm that is easier to analyse. Setting $N > 100$ can slightly improve the bound in Theorem~\ref{thm:main_upper_bound}, but the gain is rather insignificant. 

The output of the first phase are $P$ and $\mathcal{Y}$ that have been constructed,  together with the set $\mathcal{E}$ of red edges (all blue edges will be discarded). The second stage takes $(P,\mathcal{Y},\mathcal{E})$ as input, and executes a procedure called \ref{alg:fully_randomized}, an algorithm which makes greedy $\YY$-extensions or path extensions whenever possible, and otherwise chooses $v_\tp$ randomly amongst $U_t\cup V(\YY_t)$. We execute \ref{alg:fully_randomized} until we are left with $\eps n$ vertices in $U_t\cup V(\YY_t)$, where $\eps=\eps(n)$ tends to $0$ as $n \rightarrow \infty$ arbitrarily slowly. (In practice, one can set $\eps$ to be an arbitrarily small positive number when running this algorithm.) At this point, we proceed to the final stage where a clean-up algorithm is run, which uses merely path augmentations. Using well-known concentration inequalities we prove that a Hamiltonian cycle can be  constructed in an additional $O(\sqrt{\eps}n)=o(n)$ steps.

In Section~\ref{warm-up-upper-bound}, we first describe \ref{alg:fully_randomized}, as it is easier to state and analyze than \ref{alg:degree_greedy}. Moreover, if we take $N=0$, which corresponds to executing \ref{alg:fully_randomized} from the beginning, then we will be left with a path on all but $\eps n$ vertices after $\alpha^{*}n$ steps where $\alpha^{*} \le 1.84887$. This is exactly the upper bound obtained in~\cite{frieze2022hamilton}. Our third stage clean-up algorithm from Section \ref{sec:clean_up} allows us to complete the Hamiltonian cycle in another $o(n)$ steps. Thus, Sections~\ref{warm-up-upper-bound} and~\ref{sec:clean_up} provide a self-contained proof of an upper bound on $C_{\mathtt{HAM}}$ of $\alpha^{*} \le 1.84887$ (see Theorem~\ref{thm:warm_up}). Afterwards, in Section~\ref{sec:degree_greedy} we formally state and analyze our first stage algorithm. This is the most technical section of the paper, as \ref{alg:degree_greedy} makes decisions in a more intelligent manner than \ref{alg:fully_randomized} which necessitates more random variables in its analysis. By executing these three stages in the aforementioned order, we attain the claimed upper bound of Theorem~\ref{thm:main_upper_bound}.

\subsection{A Fully Randomized Algorithm}\label{warm-up-upper-bound} 

The algorithm takes a tuple $(P,\mathcal{Y},\mathcal{E})$ as input where 
\begin{itemize}
    \item $P$ is a path on a subset of vertices in $[n]$;
    \item $\mathcal{Y}$ is a set of pairs of vertices in $[n]\setminus V(P)$;
    \item $\mathcal{E}$ is a set of \textit{red} edges; each red edge has exactly one endpoint on $P$ and no two edges in $\mathcal{E}$ are adjacent to the same vertex in $P$; moreover, these endpoints are at distance at least 3 on the path from each other.
\end{itemize}
In order to simplify the analysis, we begin the semi-random graph process from round $t = 0$ with the initial graph $G_0$ induced by $(P, \mathcal{Y},\scr{E})$. Note that if $N=0$, then $G_0$ is the empty graph on $[n]$. We encourage the reader to keep this case in mind
on a first read through.

When considering $G_t$, a certain subset of its edges will be coloured red. This helps us define certain vertices used by our algorithm for path augmentations. A vertex $x \in V(P_t)$ is \textit{one-red}  provided it is adjacent to precisely one red edge of $G_t$. Similarly, $x \in V(P_t)$ is \textit{two-red}, provided it is adjacent to precisely two red edges of $G_t$. Throughout the execution of \ref{alg:fully_randomized}, each vertex in $V(P_t)$ is incident with at most two red edges. We denote the sets of one-red vertices and two-red vertices by $\scr{L}^{1}_t$ and $\scr{L}^{2}_t$, respectively, and refer to $\scr{L}_t := \scr{L}^{1}_t \cup \scr{L}^{2}_t$ as the \textit{red} vertices of $G_t$. By definition, $\scr{L}^{1}_t$ and $\scr{L}^{2}_t$ are disjoint. Initially, $P_0=P$, $\mathcal{Y}_0=\mathcal{Y}$, $\scr{L}^{2}_0=\emptyset$, and $\scr{L}^{1}_0$ is set to be the set of vertices in $V(P)$ that are incident with a red edge in ${\mathcal E}$. It will also be convenient to maintain a set of \textit{permissible vertices} $\scr{Q}_t \subseteq V(P_t)$ which specifies which uncoloured vertices on the path can be turned red. In order to simplify our analysis, we specify the size of $\scr{Q}_t$ and ensure that it only contains vertices of path distance at least $3$ from the red vertices on $P_t$. Formally:
\begin{enumerate}
    \item[(i)] $|\scr{Q}_t| = \max\{ |V(P_t)| - 5|\scr{L}_t| ,0\}$. \label{eqn:permissible_size}
    \item[(ii)] If $\scr{L}_t \neq \emptyset$, then each $x \in \scr{Q}_t$ satisfies $d_{P_t}(x,\scr{L}_t) \ge 3$. \label{eqn:permissible_distance}
\end{enumerate}
When $\scr{L}_t = \emptyset$, we simply take $\scr{Q}_t = V(P_t)$. Otherwise, since $|\{ x \in V(P_t) : d_{P_t}(x,\scr{L}_t) \le 2 \}| \le 5 |\scr{L}_t|$, we can maintain these properties by initially taking $\{ x \in V(P_t) : d_{P_t}(x,\scr{L}_t) \ge 3\}$, and then (if needed) arbitrarily removing $|\{ x \in V(P_t) : d_{P_t}(x,\scr{L}_t) \ge 3\}| - \max\{ |V(P_t)| - 5|\scr{L}_t| ,0\}$ vertices from it.

Upon the arrival of $u_\tp$, there are five main cases our algorithm must handle. The first three cases involve extending $P_t$ or $\YY_t$, whereas the latter two describe what to do when it is not possible to extend the path in the current round, and how the one-red and two-red vertices are created.

\begin{enumerate}
    \item If $u_\tp$ lands within $U_t$, then choose $v_\tp$ u.a.r., and greedily extend $\mathcal{Y}_t$ unless $|U_t|=1$. \label{eqn:greedy_extend_deter}
    \item If $u_\tp$ lands in $V(\YY_t)$, then greedily extend $P_t$.
    \item If $u_\tp$ lands at path distance one from some $x \in \scr{L}_t$, then augment $P_t$ via an arbitrary red edge of $x$. \label{eqn:path_augment_deter}
    \item If $u_\tp$ lands in $\scr{Q}_t$, then choose $v_\tp$ u.a.r.\ amongst $U_t \cup V(\YY_t)$, and colour $u_\tp v_\tp$ red. This case creates a one-red vertex. \label{eqn:one_cheery} 
    \item If $u_\tp$ lands in $\scr{L}^{1}_t$, then choose $v_\tp$ u.a.r.\ amongst $U_t$ and colour $u_\tp v_\tp$ red. This case converts a one-red vertex to a two-red vertex. 
\end{enumerate}
In all the remaining cases, we choose $v_\tp$ arbitrarily, and interpret the algorithm as \textit{passing} on the round, meaning the edge $u_\tp v_\tp$  will not be used to construct a Hamiltonian cycle. In particular, the algorithm passes on rounds in which $u_\tp$ lands at path distance two from some $x \in \scr{L}_t$. This guarantees that no two red vertices are at distance two from each other and so when $u_\tp$ lands next to a red vertex, this neighbouring red vertex is uniquely identified. Let us say that a red vertex is \textit{well-spaced}, provided it is at distance at least $3$ on the path from all other red vertices, and it is \textit{not} an endpoint of $P_t$. Observe that each well-spaced red vertex yields precisely two vertices on $P_t$ where a path augmentation involving $u_\tp$ can occur. By construction, all but at most $2$ of the algorithm's red vertices are well-spaced. The step $\tp$ of the algorithm when $u_\tp$ is drawn u.a.r.\ from $[n]$ is formally described by the \ref{alg:fully_randomized} algorithm. Specifically, we describe how the algorithm chooses $v_\tp$, how it constructs $P_\tp$, and how it adjusts the colours of $G_\tp$, thus updating $\scr{L}^{1}_t$ and $\scr{L}^{2}_t$. 
\begin{varalgorithm}{$\mathtt{FullyRandomized}$}
\caption{Step $\tp$} 
\label{alg:fully_randomized}
\begin{algorithmic}[1]
\If{$u_\tp \in U_t$ and $|U_t| \ge 2$} \Comment{greedily extend $\YY_t$}
\State Let $v_\tp$ be a uniformly random vertex in $U_t\setminus \{u_\tp\}$.
\State Set $P_\tp = P_t$ and $\YY_\tp=\YY_t\cup \{u_\tp v_\tp\}$.  

\ElsIf{$u_\tp y\in \YY_t$ for some $y$}\Comment{greedily extend $P_t$}
\State Let $v_\tp$ be an arbitrarily chosen endpoint of $P_t$.

\State Set $V(P_\tp) = V(P_t) \cup \{u_\tp,y\}$, $E(P_\tp) = E(P_t) \cup \{u_\tp v_\tp,u_\tp y \}$. 
\State Set $\YY_\tp=\YY_{t}\setminus \{u_\tp y\}$.
\State Uncolour all of the edges adjacent to $u_\tp$ or $y$.

\ElsIf{$d_{P_t}(u_\tp, \scr{L}_t) =1$}     \Comment{path augment via red vertices}
\State Let $x \in \scr{L}_t$ be the (unique) red vertex adjacent to $u_\tp$ 

\State Denote $x y \in E(G_t)$ an arbitrary red edge of $x$.
\If{$y \in U_t$}
\State Set $v_\tp = y$.
\State Set $V(P_\tp) = V(P_t) \cup \{ v_\tp \}$ and $E(P_\tp) = (E(P_t) \cup \{x v_\tp, u_\tp v_\tp\}) \setminus \{u_\tp x \}$. \State Set $\YY_\tp=\YY_{t}$
\State  Uncolour all of the edges adjacent to $r$.
\ElsIf{$yy' \in \scr{Y}_t$ for some $y'$}
\State Set $v_\tp=y'$
\State Set $V(P_\tp) = V(P_t) \cup \{ y, v_\tp \}$, $E(P_\tp) = (E(P_t) \cup \{xy, y v_\tp, u_\tp v_\tp \}) \setminus \{u_\tp x \}$. 
\State Set $\YY_\tp=\YY_t\setminus\{y v_\tp\}$
\State Uncolour all of the edges adjacent to $y$ or $v_\tp$.

\EndIf

\Else{}
\If{$u_\tp \in \scr{Q}_t \cup \scr{L}^1_t$}  \Comment{construct red vertices or pass}
\State Choose $v_\tp$ u.a.r.\ from $U_t \cup V( \YY_t )$. 
\State Colour $u_\tp v_\tp$ red.    \Comment{construct a one-red or two-red vertex}  

\Else{} \Comment{pass on $u_\tp v_\tp$}
\State Choose $v_\tp$ arbitrarily from $[n]$. 
\EndIf

\State Set $P_\tp = P_t$; $\YY_\tp=\YY_{t}$.

\EndIf

\State Update $U_{t+1}$ and $\scr{L}_{t+1}$.

\State Update $\scr{Q}_\tp$, if needed, such that $|\scr{Q}_\tp| = |V(P_\tp)| - 5|\scr{L}_\tp|$.

\end{algorithmic}
\end{varalgorithm}

We define the random variables $X(t)= |V(P_t)|$, $L_{1}(t)= |\scr{L}^{1}_t|$, $L_{2}(t) = |\scr{L}^{2}_t|$, $L(t)=|\scr{L}_t|=L_{1}(t)+L_{2}(t)$, and $Y(t)=|V(\YY_t)|=2|\YY_t|$. Note that $L(t)$ is an auxiliary random variable which we define only for convenience, and $Y(t)$ denotes the number of vertices incident to edges in $\YY_t$. 

The input $(P,\scr{Y},{\mathcal E})$ of \ref{alg:fully_randomized} is the output of \ref{alg:degree_greedy}, and thus is randomized. Our analysis of the execution of \ref{alg:fully_randomized} relies on the fact that $(P,\YY, {\mathcal E})$ has a certain distribution. To be specific, recall that ${\mathcal L}^{1}_0$ is the set of vertices on $P$ incident with an edge in ${\mathcal E}$. Then, conditional on $P$, $|\YY|$, and ${\mathcal L}^{1}_0$, the following properties are satisfied by $(P,\YY, {\mathcal E})$:
\begin{enumerate} [label=(\subscript{O}{{\arabic*}})]
    \item $\YY$ is uniform over all possible $|\YY|$ pairs of vertices in $[n]\setminus V(P)$; \label{item:pair_distribution}
    \item ${\mathcal E}$ is uniform over all possible set of edges joining ${\mathcal L}^{1}_0$ and $[n]\setminus V(P)$ such that every vertex in ${\mathcal L}^{1}_0$ is incident with exactly one edge in ${\mathcal E}$. \label{item:red_edge_distribution}
\end{enumerate}
We shall prove that these properties hold in \Cref{sec:proof_main}. Using these properties, together with the specification of \ref{alg:fully_randomized}, we first show that our random variables cannot change drastically in one round.  We use $\Delta$ to denote the one step changes in our random variables (i.e., $\Delta X(t): = X(\tp) - X(t)$).

\begin{lemma}[Boundedness Hypothesis -- \ref{alg:fully_randomized}] \label{lem:lipschitz_randomized}
With probability $1 - O(n^{-1})$, 
$$\max\{ |\Delta X(t)|, |\Delta L_{1}(t)|, |\Delta L_{2}(t)|, |\Delta Y(t)|\} = O(\log n) $$
for all $0 \le t \le 3n -|\mathcal{E}|$ with $n -X(t) \ge n/ \log n$. 
\end{lemma}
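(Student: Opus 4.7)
By inspection of \ref{alg:fully_randomized}, a greedy $\YY$-extension changes $(X,Y)$ by $(0,+2)$, a greedy path extension by $(+2,-2)$, a path augmentation by $(+1,0)$ or $(+2,-2)$, and the remaining rounds leave both unchanged; hence $|\Delta X(t)|, |\Delta Y(t)| \le 2$ deterministically. The subtler quantities are $L_1(t)$ and $L_2(t)$. In a single round, at most one new red edge is created (in case 4 or 5, altering $L_1+L_2$ by at most $1$), and at most two non-path vertices among $\{u_\tp, y, v_\tp\}$ are absorbed into $V(P_\tp)$ by an extension or augmentation. Each absorbed vertex $w$ forces every red edge incident to $w$ in $G_t$ to be uncoloured, and each uncolouring demotes or declassifies exactly one path vertex, altering $L_1+L_2$ by at most $2$. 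Writing
\[
D(t) \;:=\; \max_{v \in [n] \setminus V(P_t)} d^{\mathrm{red}}_{G_t}(v),
\]
we obtain $\max\{|\Delta L_1(t)|, |\Delta L_2(t)|\} \le 4\,D(t) + 1$, so the lemma reduces to proving $D(t) = O(\log n)$ for all $t$ in the stated range with probability $1 - O(n^{-1})$.

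\medskip
\noindent
To control $D(t)$, I associate to each $v \in [n]$ the monotone cumulative statistic
\[
\bar D_t(v) \;:=\; d^{\mathrm{red}}_{G_0}(v) \;+\; \sum_{s = 0}^{t-1} \ind{v = v_{s+1}\ \text{and step } s{+}1 \text{ executes case 4 or 5 of \ref{alg:fully_randomized}}},
\]
which pathwise upper-bounds $d^{\mathrm{red}}_{G_t}(v)$, since any red edge incident to $v$ at time $t$ either came from $\mathcal{E}$ or was created at some step where $v$ was chosen as $v_{s+1}$. In particular $D(t) \le \max_{v} \bar D_t(v)$. By property $(O_2)$, the initial red edges $\mathcal{E}$ form a uniformly random matching from $\scr{L}^1_0$ into $[n] \setminus V(P_0)$ with every vertex of $\scr{L}^1_0$ saturated. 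Since $|\scr{L}^1_0| \le n$ and $|[n] \setminus V(P_0)| \ge n/\log n$ (the range of interest), a standard balls-into-bins estimate together with a union bound gives $\max_v d^{\mathrm{red}}_{G_0}(v) = O(\log n)$ with probability $1 - O(n^{-2})$.

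\medskip
\noindent
For the adaptive summands, define the stopping time
\[
\tau \;:=\; \min\Bigl\{\, 3n - |\mathcal{E}|,\ \min\{t \ge 0 : n - X(t) < n/\log n\}\,\Bigr\}.
\]
At every $s < \tau$, conditional on the entire history, $v_{s+1}$ (when selected in case 4 or 5) is drawn uniformly from a set of size $\Omega(n - X(s)) = \Omega(n/\log n)$, so each indicator in the sum defining $\bar D_\tau(v)$ has conditional mean at most $O(\log n / n)$. Stochastic domination then bounds the adaptive part of $\bar D_\tau(v)$ by a $\mathrm{Bin}(3n, C\log n / n)$ random variable of mean $O(\log n)$; Chernoff plus a union bound over $v \in [n]$ gives $\max_v \bar D_\tau(v) = O(\log n)$ with probability $1 - O(n^{-1})$. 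Because $n - X(t)$ is non-increasing, the set of $t$ for which the hypothesis $n - X(t) \ge n/\log n$ holds is exactly $[0,\tau]$, and the monotonicity of $\bar D_\cdot(v)$ therefore delivers $D(t) = O(\log n)$ for every such $t$, completing the argument.

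\medskip
\noindent
The chief obstacle is the history-dependence of $v_{s+1}$, which prevents treating the indicator summands as independent. The resolution via a stopping time and stochastic domination is standard, but does require verifying that $v_{s+1}$ is always drawn (in the relevant cases) from a pool of size $\Omega(n/\log n)$---which is exactly the content of the hypothesis $n - X(t) \ge n/\log n$ combined with the specification of cases 4 and 5 of \ref{alg:fully_randomized}. The deterministic bound on $|\Delta X(t)|, |\Delta Y(t)|$ and the reduction of $|\Delta L_1(t)|, |\Delta L_2(t)|$ to $D(t)$ are then straightforward case-by-case bookkeeping.
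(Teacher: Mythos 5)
Your argument follows the same high-level template as the paper's: bound $|\Delta X|,|\Delta Y|$ deterministically, reduce $|\Delta L_1|,|\Delta L_2|$ to the maximum red degree of an unsaturated vertex, dominate that degree by a $\Bin(3n,\,O(\log n/n))$ variable, and finish with Chernoff plus a union bound. Where you genuinely diverge is in how the binomial domination is justified. The paper's proof is \emph{static}: it conditions on the set $U_t \cup V(\YY_t)$ at time $t$ and invokes property $(O_2)$ together with the deferred-information-exposure structure to conclude that each surviving red edge is equally likely to land on any vertex of $U_t \cup V(\YY_t)$, giving the $\Bin(t+|\mathcal{E}|,\,|U_t \cup V(\YY_t)|^{-1})$ bound directly. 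Your proof is \emph{cumulative}: you introduce the monotone statistic $\bar D_t(v)$, stop at $\tau$, and bound the per-step conditional probability of $v = v_{s+1}$. This avoids any appeal to exchangeability of current red-edge endpoints, which is a nice simplification in spirit, but it introduces a gap.

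The gap is in the claim that, in cases 4 and 5, $v_{s+1}$ is drawn from a pool of size $\Omega(n - X(s))$. This holds for case 4, where the pool is $U_s \cup V(\YY_s) = [n]\setminus V(P_s)$. But in case 5 the algorithm explicitly draws $v_\tp$ uniformly from $U_s$ alone, and $|U_s|$ is not guaranteed to be $\Omega(n - X(s))$: $|U_s|$ is non-increasing (each $\YY$-extension removes two vertices, each absorbed vertex removes one), and it can shrink far below $n/\log n$ while $V(\YY_s)$ stays large, keeping $n - X(s)\ge n/\log n$ in force. In that regime the per-step probability $1/|U_s|$ of a case-5 red edge landing on a fixed $v\in U_s$ is not $O(\log n/n)$, so your stochastic domination by $\Bin(3n, C\log n/n)$ does not follow from the hypothesis $n - X(t) \ge n/\log n$ alone. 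You would either need a separate argument that $|U_t|$ stays $\Omega(n/\log n)$ on the event you stop at, or you would need to invoke the distributional/exchangeability argument for case-5 endpoints that the paper uses, which is exactly what your approach was trying to avoid. The rest of the bookkeeping (the bound $\max\{|\Delta L_1|,|\Delta L_2|\}\le 4D(t)+1$, the treatment of the initial $\mathcal{E}$ via $(O_2)$ as a random matching, Chernoff, and the union bound) is fine.
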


\begin{proof} 
Note that, by design, the path can only increase its length but it cannot absorb more than two vertices in each round. Hence, the desired property clearly holds for the random variable $X(t)$. The same holds for $Y(t)$. To estimate the maximum change for the random variables $L_1(t)$ and $L_2(t)$, we need to upper bound the number of red edges adjacent to any particular vertex $v\in U_t\cup V(\YY_t)$. Observe that due to \ref{item:red_edge_distribution} and how the red edges are randomly selected, if we condition on $U_t \cup V(\YY_t)$, then this is stochastically upper bounded by $\Bin(t +|\mathcal{E}|, |U_t \cup V(\YY_t)|^{-1})$. Since $t +|\mathcal{E}| \le 3n$, and we have assumed that there are at least $n/ \log n$  vertices in $U_t\cup V(\YY_t)$, the number of red edges adjacent to $v$ is stochastically upper bounded by the binomial random variable $\Bin(3n, \log n/n $) with expectation $3 \log n $. It follows immediately from Chernoff's bound that with probability $1-O(n^{-3})$, the number of red edges adjacent to $v$ is $O(\log n)$, and so the desired bound holds by union bounding over all $3n^2$ vertices and steps.
\end{proof}

Let us denote $H_t = ( X(i), L_{1}(i), L_{2}(i), Y(i))_{0 \le i \le t}$. Note that $H_t$ does \textit{not} encompass the entire history of the random process after $t$ rounds (i.e., $G_0, \ldots ,G_t$, the first $\tp$ graphs appearing in the sequence generated by the process). The distribution of $(P,\YY,{\mathcal E})$ together with the technique of deferred information exposure permit a tractable analysis of the random positioning of $v_t$ when $u_t$ is red. In particular, as we only expose $Y(t)$ instead of $\YY_t$, $\YY_t$ has the same distribution (conditional on $H_t$) as first exposing the set of vertices in $[n]\setminus V(P_t)$, then uniformly selecting a subset of vertices in $[n]\setminus V(P_t)$ of cardinality $Y(t)$, and then finally taking a uniformly random perfect matching over the $Y(t)$ vertices (i.e.\ pair the $Y(t)$ vertices into $Y(t)/2$ disjoint edges). Similarly, conditional on $L_1(t)$ and $L_2(t)$, we may first expose ${\mathcal L}_t^1$ and ${\mathcal L}_t^2$, and then choose their neighbours joined by a red edge uniformly from $[n]\setminus V(P_t)$. Moreover, this process (of choosing the ends of red edges lying in $[n]\setminus V(P_t)$) is independent of the process of choosing  and pairing vertices for $\YY_t$. We observe the following expected difference equations. 

\begin{lemma}[Trend Hypothesis -- \ref{alg:fully_randomized}] \label{lem:randomized_expected_differences}
For each $t \ge 0$, if $n -X(t) \ge n/ \log n$, then by setting $\Gamma(t)=1+Y(t)/(n-X(t))$ and $A(t)=2Y(t)/(n-X(t))$,
\begin{eqnarray}
\E[\Delta X(t) \mid H_t]&=& \frac{2 Y(t)}{n} + \frac{2L(t)}{n}\cdot \Gamma(t)+O(\log n/n)\label{diff-X}\\ 
\E[\Delta Y(t) \mid H_t]&=& -\frac{2 Y(t)}{n} + \frac{2(n-X(t)-Y(t))}{n} - \frac{2L(t)}{n}\cdot A(t) \label{diff-Y} +O(\log n/n)\\ 
\E[ \Delta L_{1}(t)\mid H_t]&=&  \frac{X(t) - 5L(t)}{n} - \frac{2 L_{1}(t)}{n} + \frac{2 L_{1}(t)}{n}\left( \frac{2 L_{2}(t)}{n - X(t)} - \frac{L_{1}(t)}{n-X(t)}  \right) \cdot \Gamma(t) \nonumber\\
&& +\frac{2 L_{2}(t)}{n} + \frac{2 L_{2}(t)}{n}\left(  \frac{2 L_{2}(t)}{n - X(t)} - \frac{L_{1}(t)}{n-X(t)} \right) \cdot \Gamma(t) - \frac{L_{1}(t)}{n} \nonumber\\
&& + \frac{2Y(t)}{n} \cdot\left( \frac{2 L_{2}(t)}{n-X(t) } - \frac{L_{1}(t)}{n-X(t) } \right)   + O(\log n/n)\label{diff-L1}\\
\E[ \Delta L_{2}(t) \mid H_t] &=& \frac{L_{1}(t)}{n} -  \frac{2 L_{2}(t)}{n}\cdot A(t)  - \frac{2 L_{1}(t)}{n} \cdot \frac{2 L_{2}(t)}{n-X(t)} \cdot B(t)\nonumber\\
&& - \frac{2 L_{2}(t)}{n} - \frac{2 L_{2}(t)}{n} \cdot  \frac{2 L_{2}(t)}{n - X(t)} \cdot \Gamma(t) + O(\log n/n).\label{diff-L2}
\end{eqnarray}
\end{lemma}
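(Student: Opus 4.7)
The plan is to compute $\E[\Delta Z(t) \mid H_t]$ for each $Z \in \{X, Y, L_1, L_2\}$ by conditioning on which case of \ref{alg:fully_randomized} applies at round $t+1$: $u_{t+1}$ lands in $U_t$, in $V(\YY_t)$, at path distance $1$ from some red vertex $x$ (split by whether $x \in \scr{L}^1_t$ or $\scr{L}^2_t$), in $\scr{Q}_t$, in $\scr{L}^1_t$, in $\scr{L}^2_t$, or in the ``pass'' region (distance-$2$ neighbours of $\scr{L}_t$ and a few path endpoints). Each event has probability equal to the size of its defining set divided by $n$, up to edge-effect corrections of size $O(\log n/n)$ absorbed using Lemma~\ref{lem:lipschitz_randomized}.

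The crucial conditional-distribution input, derived from properties $\mathrm{O}_1$ and $\mathrm{O}_2$ together with a deferred-exposure argument, is that given $H_t$ we may sample $\YY_t$ as a uniformly random perfect matching on a uniformly random size-$Y(t)$ subset of $[n]\setminus V(P_t)$, and independently sample the non-path endpoints of the $L_1(t) + 2 L_2(t)$ red edges as i.i.d.\ uniform vertices of $[n]\setminus V(P_t)$. In particular, when $u_{t+1}$ is adjacent to some red vertex $x$ and we reveal the red edge $xy$ used for the augmentation, $y \in V(\YY_t)$ with probability $Y(t)/(n-X(t))$ and $y \in U_t$ otherwise; hence the augmentation absorbs $\Gamma(t) = 1 + Y(t)/(n-X(t))$ new path vertices and consumes $A(t) = 2 Y(t)/(n-X(t))$ vertices from $V(\YY_t)$ in expectation. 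Combined with the direct contributions to $X$ (namely $+2$ from a greedy path extension, and $+1$ or $+2$ from an augmentation) and to $Y$ (namely $+2$ from a $\YY$-extension and $-2$ from any operation that consumes a $\YY_t$-edge), this yields (\ref{diff-X}) and (\ref{diff-Y}) immediately. For $L_1$ and $L_2$, the direct contributions come from $u_{t+1} \in \scr{Q}_t$ which creates a fresh one-red vertex, $u_{t+1} \in \scr{L}^1_t$ which promotes a one-red to a two-red, the augmentation at $x \in \scr{L}^1_t$ which destroys a one-red, and the augmentation at $x \in \scr{L}^2_t$ which demotes $x$ from two-red to one-red.

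The delicate step is bookkeeping the \emph{side effects} on $L_1, L_2$ that arise whenever a non-path vertex $w$ is absorbed into $P_t$ (through a greedy path extension or through a path augmentation): every red edge at $w$ other than the one converted into a path edge must be uncoloured. Each uncoloured edge at an $\scr{L}^1_t$ vertex removes that vertex from $\scr{L}_t$ ($-1$ to $L_1$), and each uncoloured edge at an $\scr{L}^2_t$ vertex demotes it to one-red ($+1$ to $L_1$ and $-1$ to $L_2$). Because the non-path endpoints of the red edges are uniformly and independently distributed on $[n]\setminus V(P_t)$, a uniformly random absorbed $w$ has in expectation $L_1(t)/(n-X(t))$ red edges from $\scr{L}^1_t$ and $2 L_2(t)/(n-X(t))$ from $\scr{L}^2_t$, giving per-absorbed-vertex contributions $(2L_2(t) - L_1(t))/(n-X(t))$ to $\Delta L_1$ and $-2 L_2(t)/(n-X(t))$ to $\Delta L_2$. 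Multiplying by the expected number of absorbed non-path vertices per event ($2$ for a greedy path extension, $\Gamma(t)$ for a path augmentation) and by the probability of the triggering event ($Y(t)/n$, $2 L_1(t)/n$, $2 L_2(t)/n$), and summing produces the remaining $\Gamma$- and $A$-scaled terms in (\ref{diff-L1}) and (\ref{diff-L2}).

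The main obstacle I anticipate is the careful last step: one must verify that the dependence introduced by already-exposed red endpoints (such as the specific $y$ revealed in an augmentation) perturbs the uniform-red-edge expectations only by $O(1/n)$, control the rare event that a single vertex carries more than $O(\log n)$ red edges (using Lemma~\ref{lem:lipschitz_randomized}), treat path endpoints and distance-$\le 2$ overlaps between red vertices as lower-order corrections, and confirm that all such deviations aggregate to the stated $O(\log n/n)$ error after summing over the cases.
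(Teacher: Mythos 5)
Your proposal is correct and takes essentially the same route as the paper: partition by the algorithm's cases, use properties $(O_1)$, $(O_2)$ and deferred exposure to treat the non-path endpoints of $\YY_t$-edges and red edges as uniform over $[n]\setminus V(P_t)$, and combine each event's direct change with the expected side effects ($(2L_2 - L_1)/(n-X)$ to $\Delta L_1$ and $-2L_2/(n-X)$ to $\Delta L_2$ per absorbed vertex, scaled by the expected number absorbed, $2$ or $\Gamma(t)$). Your identification of $B(t)$ in \eqref{diff-L2} as the expected absorption count $\Gamma(t)$ and your attention to the $O(\log n/n)$ aggregation of endpoint/spacing/multi-edge corrections also match the paper's treatment.
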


\begin{proof}
As discussed earlier, the \ref{alg:fully_randomized} algorithm ensures that at time $t$ there are at most $2$ red vertices which are not well-spaced. Thus, since our expected differences each allow for a $O(\log n/n)$ term, without loss of generality, we can assume that all our red vertices are well-spaced.  Note also that all our explanations below assume that we have conditioned on $H_t$. 

When path augmentation occurs via a red edge incident to a vertex $x$ on $P_t$, we first expose $r$ (the other end of the red edge) which is distributed uniformly over all vertices in $[n]\setminus V(P_t)$, and then we expose whether $r$ is in $V(\YY_t)$. In the case that $r$ is in $V(\YY_t)$ we expose $y$ which is paired to $r$ in $\YY_t$.

The first expected difference is easy to see. Observe that there are three disjoint cases where $\Delta X(t)$ is nonzero. Case 1: $u_\tp$ lands on a vertex in $V(\YY_t)$. In this case $\Delta X(t)$ is 2, and this event occurs with probability $Y(t)/n$. Case 2: $u_\tp$ is next to a red vertex $x$ (i.e.\ a vertex in ${\mathcal L}_t$) on $P_t$, and path augmentation is performed via a red edge $xr$ where $r\in U_t$. In this case, $\Delta X(t)=1$ and the probability of this event is $(2L(t)/n)\cdot (1-Y(t)/(n-X(t)))$, where $2L(t)/n$ is the probability that $d_{P_t}(u_\tp,{\mathcal L}_t)=1$, and $1-Y(t)/(n-X(t))$ is the probability that $r\in U_t$ conditional on $r\in [n]\setminus V(P_t)$. Case 3: same as case 2 but $r\in V(\YY_t)$. In this case, $\Delta X(t)=2$ and the probability of this event is $(2L(t)/n)\cdot (Y(t)/(n-X(t)))$. Combining all three cases together we obtain~(\ref{diff-X}). 

The remaining equations are obtained in a similar manner. In what follows, we explain the event $A$ for which each term in the equations accounts for as $\E[\Delta Z(t) \cdot \bm{1}_{A}]$ where $Z\in\{Y,L_1,L_2\}$.

In the second equation, $-2Y(t)/n$ is the contribution from the case where $u_\tp$ lands on $V(\YY_t)$; $(-2L(t)/n)\cdot A(t)$ corresponds to the event that $u_\tp$ lands on a neighbour of some $x\in{\mathcal L}_t$ on $P_t$, and the path augmentation is performed via a red edge $xr$ where $r\in V(\YY_t)$. Finally, $2(n-X(t)-Y(t))/n$ corresponds to the event that $u_\tp$ lands on a vertex in $U_t$.

In the third equation, the term $(X(t)-5L(t))/n$ is the contribution from the case where $u_\tp$ lands on ${\mathcal Q}_t$. In the case where $u_\tp$ lands on a vertex neighbouring some $x\in {\mathcal L}_t^1$ on $P_t$ (which occurs with probability $2L_1(t)/n$) the contribution to $\Delta L_1(t)$ can come from two sources: (a) $x$ is removed from ${\mathcal L}_t^1$ after the path augmentation and thus it contributes $-1$ to  $\Delta L_1(t)$; (b) one or two vertices will be added to $P_t$, which results in uncolouring of all red edges incident to them, and which consequently contributes to $\Delta L_1(t)$. Note that $2L_2(t)/(n-X(t))$ is the expected number of two-red vertices that become one-red when a vertex $r\in U_t\cup V(\YY_t)$ is moved to $P_t$. Similarly, $L_1(t)/(n-X(t))$ is the expected number of one-red vertices that get removed from ${\mathcal L}_t^1$ due to moving a certain vertex $r\in U_t\cup V(\YY_t)$ to $P_t$. Finally, $B(t)$ is the expected number of vertices in $U_t\cup V(\YY_t)$ that will be added to $P_t$. 

The next two terms (in the second line of third equation) correspond to the symmetric case: $u_\tp$ lands on a vertex neighbouring some $x\in {\mathcal L}_t^2$ on $P_t$ (which occurs with probability $2L_2(t)/n$) with the same two sources that contribute to $\Delta L_1(t)$. The term $-L_1(t)/n$ corresponds to the case where $u_\tp$ lands on a vertex $x\in {\mathcal L}_t^1$ which results in moving $x$ from ${\mathcal L}_t^1$ to ${\mathcal L}_\tp^2$. 

Finally, the last term $(2Y(t)/n)(2L_2(t)-L_t(t))/(n-X(t))$ is the contribution from moving two vertices from $U_t\cup V(\YY_t)$ to $P_t$ in the case where $u_\tp$ lands on a vertex in $V(\YY_t)$.

For the last equation, $L_1(t)/n$ accounts for $u_\tp$ landing on a vertex in ${\mathcal L}^1_{t}$, and a one-red vertex becomes two-red. The term $-2L_2(t) A(t)/n$ accounts for the case where $u_\tp$ lands on a vertex in $V(\YY_{t})$.  In this case (which occurs with probability $Y(t)/2$) 2 vertices are moved from $V(\YY_{t})$ to $P_t$, each of which will be resulting in uncolouring $2L_2(t)/(n-X(t))$ red edges incident to vertices in ${\mathcal L}^2_t$ in expectation. 

The third and the fifth terms together in the equation account for the case where $u_\tp$ lands on a neighbour of ${\mathcal L}_t$, where one or two vertices in $U_t\cup V(\YY_t)$ are moved to $P_t$ after the path augmentation, each resulting in uncolouring $2L_2(t)/(n-X(t))$ red edges incident to ${\mathcal L}_t^2$ in expectation. The fourth term accounts for the case where $u_\tp$ lands on a vertex neighbouring a vertex $x\in {\mathcal L}_t^2$, resulting in the removal of $x$ from ${\mathcal L}_t^2$ after the path augmentation. 
\end{proof}

In order to analyze \ref{alg:fully_randomized}, we shall employ the differential equation method~\cite{de}. This method is commonly used in probabilistic combinatorics to analyze random processes that evolve step by step. The step changes must be small in relation to the entirety of the discrete structure. For instance, in our application, this refers to adding one edge at a time to the graph on $n$ vertices. The method allows us to derive tight bounds on the associated random variables which hold a.a.s.\ at every step of the random process. We refer the reader to~\cite{bennett2022gentle} for a gentle introduction to the methodology. 

Recall that \ref{alg:fully_randomized} takes input $(P,\YY, {\mathcal E})$. Let $X(0), Y(0), L_1(0)$ denote the number of vertices on $P$, the number of vertices incident to edges in $\YY$, and the number of vertices incident with ${\mathcal E}$, respectively.  We prove in Section~\ref{sec:proof_main} that there exist some constants $\hat{x}, \hat{y}, \hat{\ell_1}$ such that $|X(0)/n - \hat{x}|, |Y(0)/n - \hat{y}|, |L_1(0)/n - \hat{\ell_1}| \le \lambda$ for some $\lambda = o(1)$. Initially, there are no two-red vertices, that is, we will always set $L_2(0) = 0$. Let us now fix a sufficiently small constant $\eps>0$, and define the bounded domain 
$$
\scr{D}_{\eps}:= \{ (s,x,y,\ell_1, \ell_2):  -1 < s < 3, -1 < x < 1 - \eps, |\ell_1| < 2, |\ell_2| <2\}.
$$
Consider the system of differential equations in variable $s$ with functions $x =x(s), y=y(s), \ell_1 = \ell_{1}(s)$,  and $\ell_{2}=\ell_{2}(s)$:
\begin{eqnarray} 
    x' &=& 2y + 2 (\ell_{1} + \ell_2)\lambda \label{ode1} \\
    y' &=& -2y + 2(1-x-y) - 2 (\ell_{1} + \ell_2)a \label{odey} \\
    \ell_{1}' &=& x - 5 (\ell_1 + \ell_2) -2\ell_1 + (2\ell_1 \lambda+2\ell_2 \lambda+2y) \cdot \frac{2 \ell_2 - \ell_1 }{1-x}  
     +2\ell_2 -\ell_1   \label{ode2}  \\
    \ell_2' &=& \ell_1 - 2 \ell_2 a - (2\ell_1+2\ell_2) \lambda \cdot \frac{2 \ell_2}{1-x}  -2 \ell_2, \label{ode3} 
\end{eqnarray}
where $\lambda(s)=1+y(s)/(1-x(s))$ and $a(s)=2y(s)/(1-x(s))$. The right-hand side  of each of the above equations is Lipchitz on the domain $\scr{D}_{\eps}$. Define 
\[
T_{\scr{D}_{\eps}}=\min\{t\ge 0: \ (t/n, X(t)/n, Y(t)/n, L_1(t)/n,L_2(t)/n)\notin \scr{D}_{\eps}\}.
\]
Now, the `Initial Condition' of Theorem \ref{thm:differential_equation_method} is satisfied with values $(0,\hat{x}, \hat{y}, \hat{\ell}_1,0)$ and some $\lambda = o(1)$. Moreover, the `Trend Hypothesis' and `Boundedness Hypothesis' are satisfied with some $\delta=O(\log n/n)$, $\beta=O(\log n)$ (with failure probability $\gamma=o(1)$ throughout the process) by Lemmas~\ref{lem:lipschitz_randomized} and~\ref{lem:randomized_expected_differences}. Thus, for every $\delta>0$, $X(t)=nx(t/n)+o(n)$, $Y(t)=ny(t/n)+o(n)$, $L_1(t)=n\ell_1(t/n)+o(n)$ and $L_2(t)=n\ell_2(t/n)+o(n)$ uniformly for all $t_0 \le t \le(\sigma(\eps)-\delta) n$, where $x$, $y$, $\ell_1$ and $\ell_2$ are the unique solution to~(\ref{ode1})--(\ref{ode3}) with initial conditions $x(0)=\hat{x}$, $y(0)=\hat{y}$,  $\ell_1(0)=\hat{\ell}_1$, and $\ell_2(0)=0$, and $\sigma(\eps)$ is the supremum of $s$ to which the solution can be extended before reaching the boundary of $\scr{D}_{\eps}$. 

\begin{lemma}[Concentration of \ref{alg:fully_randomized}'s Random Variables] \label{lem1:concentration_random}
For every $\delta>0$, a.a.s.\ for all $0 \le t\le (\sigma(\eps)-\delta)n$,
\[
    \max \Big\{ |X(t) -x(t/n) n|, |Y(t) -y(t/n) n|,|L_{1}(t) - \ell_{1}(t/n) n|, |L_{2}(t) - \ell_{2}(t/n) n| \Big\} =  o(n). 
\]
\end{lemma}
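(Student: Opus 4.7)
The plan is a direct application of the differential equation method (Theorem~\ref{thm:differential_equation_method}) to the four-dimensional random process $(X(t),Y(t),L_1(t),L_2(t))$ driven by \ref{alg:fully_randomized}, with stopping time $T_{\scr{D}_\eps}$. The paragraph preceding the lemma has in fact already assembled all three standard ingredients of the method, and my proof would simply formalize their combination. The \emph{initial condition} $|X(0)/n - \hat{x}|, |Y(0)/n - \hat{y}|, |L_1(0)/n - \hat{\ell}_1| = o(1)$ (with $L_2(0)=0$ deterministically) is provided by the analysis of \ref{alg:degree_greedy} to be carried out in \Cref{sec:proof_main}; the \emph{trend hypothesis} is Lemma~\ref{lem:randomized_expected_differences}, whose conditional one-step expectations match the right-hand sides of (\ref{ode1})--(\ref{ode3}) with additive error $O(\log n/n)$; and the \emph{boundedness hypothesis} is Lemma~\ref{lem:lipschitz_randomized}, which bounds every one-step change by $O(\log n)$ with failure probability $O(n^{-1})$.

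Next I would verify the Lipschitz hypothesis for the ODE system on $\scr{D}_\eps$. The only potentially singular expressions in (\ref{odey})--(\ref{ode3}) are the rational functions $\lambda(s)$ and $a(s)$ involving $1/(1-x)$, but on $\scr{D}_\eps$ we have $1-x > \eps$, so all right-hand sides are smooth and Lipschitz there. Consequently the ODE system admits a unique solution starting from $(\hat{x},\hat{y},\hat{\ell}_1,0)$ extending up to the boundary exit time $\sigma(\eps)$. Plugging the three hypotheses into Theorem~\ref{thm:differential_equation_method} with, say, $\delta = O(\log n/n)$ and $\beta = O(\log n)$, one obtains a.a.s.\
\[
\max\set{|X(t) - n x(t/n)|, |Y(t) - n y(t/n)|, |L_1(t) - n\ell_1(t/n)|, |L_2(t) - n\ell_2(t/n)|} = o(n)
\]
uniformly for $0 \le t \le (\sigma(\eps)-\delta')n$ and any fixed $\delta'>0$, which is exactly the statement of the lemma.

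The main subtlety I would expect to address is that Lemma~\ref{lem:randomized_expected_differences} conditions only on $H_t = (X(i),L_1(i),L_2(i),Y(i))_{0\le i \le t}$ rather than on the full $\sigma$-algebra generated by $G_0,\ldots,G_t$, whereas the usual statement of the DE method conditions on the entire history. This gap is bridged by the deferred-information-exposure argument sketched between Lemmas~\ref{lem:lipschitz_randomized} and~\ref{lem:randomized_expected_differences}: properties \ref{item:pair_distribution} and \ref{item:red_edge_distribution}, which are invariantly maintained by \ref{alg:fully_randomized}, guarantee that conditional on $H_t$ the unexposed pairing of $\YY_t$ and the unexposed endpoints of red edges in $[n]\setminus V(P_t)$ are uniform over admissible configurations and mutually independent. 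Hence the conditional drift given the full history agrees with the conditional drift given $H_t$, so $H_t$ already carries enough information to apply the DE method. Once this Markov-type reduction is checked, the rest of the proof is essentially book-keeping.
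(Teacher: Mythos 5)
Your proposal is correct and follows the same route as the paper: the three sentences immediately preceding Lemma~\ref{lem1:concentration_random} (verifying the Initial Condition, Trend Hypothesis, and Boundedness Hypothesis and invoking Theorem~\ref{thm:differential_equation_method}) are in effect the paper's proof, and your write-up reconstructs exactly that, together with the routine Lipschitz check on $\scr{D}_\eps$. Your closing observation about conditioning on $H_t$ rather than the full graph history is a fair point of clarification, though note that Theorem~\ref{thm:differential_equation_method} already allows an arbitrary filtration $(\scr{F}_t)$, so one may simply take $\scr{F}_t=\sigma(H_t)$ and feed Lemma~\ref{lem:randomized_expected_differences} in directly, without needing to argue that the drift given the full history coincides with the drift given $H_t$.
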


As $\scr{D}_{\eps}\subseteq \scr{D}_{\eps'}$ for every $\eps>\eps'$, $\sigma(\eps)$ is monotonically nondecreasing as $\eps\to 0$. Thus, 
\begin{equation}
    \alpha^*:=\lim_{\eps \to 0+} \sigma(\eps) \label{alpha_star}
\end{equation} 
exists. It is clear that $|L_1(t)/n|$, $|L_2(t)/n|$, and $|Y(t)/n|$ are all bounded by 1 for all $t$ and thus, when $t/n$ approaches $\alpha^*$, either $X(t)/n$ approaches 1 or $t/n$ approaches 3. Formally, we have the following proposition.

\begin{proposition}\label{p:boundary} 
For every $\eps>0$, there exists $\delta>0$ such that a.a.s.\ one of the following holds.
\begin{itemize}
    \item $X(t)>(1-\eps)n$ for all $ t\ge (\alpha^*-\delta)n$;
    \item $\alpha^*=3$.
\end{itemize}
\end{proposition}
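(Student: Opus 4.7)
The plan is to characterise how the ODE trajectory $(x,y,\ell_1,\ell_2)$ can exit the bounded domain $\scr{D}_\eps$ as $s\to\sigma(\eps)^-$, and then to transfer this to the random process via Lemma~\ref{lem1:concentration_random}.

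First, I would argue that the only boundaries of $\scr{D}_\eps$ that the ODE trajectory can approach are $x = 1-\eps$ and $s = 3$. The algorithmic invariant $|\scr{Q}_t| = \max\{|V(P_t)| - 5|\scr{L}_t|,\,0\}$, combined with the fact that new red vertices arise only when $u_{t+1}$ lands in $\scr{Q}_t\cup\scr{L}^1_t$, forces $L(t) \le X(t)/5 + O(1)$; hence in the ODE limit $\ell_1,\ell_2 \in [0,1/5] \subset (-2,2)$. Moreover, from \eqref{odey}--\eqref{ode3} one checks that $y,\ell_1,\ell_2$ stay nonnegative throughout: for instance, $\ell_2=0$ gives $\ell_2' = \ell_1 \ge 0$, and $y = 0$ forces $a = 0$ and $y' = 2(1-x) \ge 0$. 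Finally, since $s$ starts at $0$ and since $x' = 2y + 2(\ell_1+\ell_2)\lambda \ge 0$ whenever $y,\ell_1,\ell_2 \ge 0$ (so $x$ is nondecreasing), the lower boundaries $s=-1$ and $x=-1$ are never approached.

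Second, if $\alpha^* = 3$, then alternative~(2) of the proposition holds trivially, so suppose $\alpha^* < 3$. Since $\sigma$ is nondecreasing as $\eps \to 0^+$ and $\alpha^* = \lim_{\eps \to 0^+}\sigma(\eps)$, we have $\sigma(\eps) \le \alpha^* < 3$ for every $\eps > 0$. Combined with the first step, this forces the ODE trajectory to exit $\scr{D}_\eps$ at $s = \sigma(\eps)$ via $x(\sigma(\eps)) = 1 - \eps$ and not via $s = 3$.

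Third, given $\eps > 0$, I would pick $\eps' \in (0,\eps)$ and, by continuity of $x$, choose $\delta' > 0$ small enough that $x(\sigma(\eps') - \delta') > 1 - \eps$ with a margin. Setting $t^* := (\sigma(\eps') - \delta')n$, Lemma~\ref{lem1:concentration_random} yields that a.a.s.\ $X(t^*) = x(\sigma(\eps')-\delta')n + o(n) > (1-\eps)n$ for $n$ large. Because $\Delta X(t) \ge 0$ throughout the process (the path only grows), this inequality persists for every $t \ge t^*$. Choosing $\delta := \alpha^* - \sigma(\eps') + \delta' > 0$ converts $t \ge t^*$ into $t \ge (\alpha^* - \delta)n$, establishing alternative~(1). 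The main obstacle I anticipate is the structural bound $L(t) \le X(t)/5 + O(1)$ in the first step, which is precisely what closes off the $\ell_1$ and $\ell_2$ exits of $\scr{D}_\eps$; everything else is routine continuity, monotonicity of $X(t)$, and the already-proved concentration.
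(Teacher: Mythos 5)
Your proof is correct, and its skeleton (identify which boundary of $\scr{D}_\eps$ the ODE trajectory can exit through, then transfer to the random process via Lemma~\ref{lem1:concentration_random} and monotonicity of $X$) is exactly what the paper's terse justification intends. The second and third steps are clean. However, the ``main obstacle'' you flag in the first step is a detour: you do not need the structural bound $L(t)\le X(t)/5+O(1)$ to close off the $\ell_1,\ell_2$ exits. The domain $\scr{D}_\eps$ has $|\ell_1|<2$ and $|\ell_2|<2$, and the trivial observation that $L_1(t),L_2(t)$ are cardinalities of subsets of $[n]$ (so $0\le L_i(t)/n\le 1$ deterministically), combined with Lemma~\ref{lem1:concentration_random} and the fact that the ODE solution is deterministic, already forces $\ell_1(s),\ell_2(s)\in[0,1]$ for all $s<\sigma(\eps)$ --- strictly inside $(-2,2)$. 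This is the paper's stated reason (``$|L_1(t)/n|,|L_2(t)/n|,|Y(t)/n|$ are all bounded by $1$''), and it avoids two loose ends in your write-up: (a) your ODE-based nonnegativity check is incomplete --- you verify $y$ and $\ell_2$ at their zero boundary but not $\ell_1$, whose $\ell_1'$ at $\ell_1=0$ contains the term $x-5\ell_2$ which is not obviously nonnegative without invoking the very $L\le X/5$ bound you are trying to justify, making the argument borderline circular; and (b) the $L\le X/5+O(1)$ invariant itself needs a short induction that you only gesture at. Replacing all of step one with the trivial $[0,1]$ bound (pushed through the concentration lemma) is both shorter and closes both gaps.
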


The ordinary differential equations~(\ref{ode1})--(\ref{ode3}) do not have an analytical solution. In both cases, $N=0$ and $N=100$, numerical solutions show that $\alpha^*< 1.85$. (For $N=0$, $\alpha^* \approx 1.84887$.) Thus, by the end of the execution of \ref{alg:fully_randomized}, there are $\eps n$ unsaturated vertices (i.e.\ vertices not in $P_t$) remaining, for some $\eps=o(1)$.

\subsection{A Clean-up Algorithm} \label{sec:clean_up}

Suppose that we are presented a path $P$ on $(1-\eps)n$ vertices of $[n]$, where $0 < \eps =\eps(n) < 1/1000$. The assumption on $\eps$ is a mild but convenient one. We will apply the argument for $\eps=o(1)$. In this section, we provide an algorithm for the semi-random graph process which absorbs the remaining $\eps n$ vertices into $P$ to form a Hamiltonian path, after which a Hamiltonian cycle can be constructed. The whole procedure takes $O(\sqrt{\eps}n + n^{3/4}\log^2 n) = o(n)$ further steps in the semi-random graph process. Moreover, the algorithm is self-contained in that it only uses the edges of $P$ in its execution.

\begin{lemma}[Clean-up Algorithm] \label{lem:clean_up}
Let $0 < \eps = \eps(n) < 1/1000$, and suppose that $P$ is a path on $(1-\eps)n$ vertices of $[n]$. Then, given $P$ initially, there exists a strategy for the semi-random graph process which builds a Hamiltonian cycle from $P$ in $O(\sqrt{\eps}n + n^{3/4}\log^2 n)$ steps a.a.s.
\end{lemma}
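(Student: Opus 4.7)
The plan is to handle the cleanup in two sub-phases: a bulk absorption phase that reduces the leftover set $U_t := [n] \setminus V(P_t)$ to size at most $n^{3/4}$ in $T_1 = O(\sqrt{\eps}n)$ rounds, and an endgame phase that absorbs the rest and closes the cycle in $T_2 = O(n^{3/4}\log^2 n)$ rounds.

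\emph{Bulk absorption phase.} I would adapt the primitives of \ref{alg:fully_randomized} to the sparse-leftover regime. Maintain the path $P_t$, a set $\YY_t$ of disjoint pairs drawn from $U_t$, and a set $\mathcal{L}_t$ of red vertices on $P_t$, each joined by a red edge either to a vertex of $U_t$ or to a pair in $\YY_t$. In each step, the algorithm prioritizes (i) a greedy path augmentation when $u_{t+1}$ is at path-distance one from some $x \in \mathcal{L}_t$, (ii) a greedy path extension when $u_{t+1} \in V(\YY_t)$, and (iii) a $\YY$-extension when $u_{t+1} \in U_t$; on the remaining rounds (when $u_{t+1}$ lands on an uncolored interior vertex of $P_t$ far from $\mathcal{L}_t$), we aggressively create a new red edge by choosing $v_{t+1}$ from $V(\YY_t) \cup U_t$. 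Under this strategy $|\mathcal{L}_t|$ grows roughly linearly in $t$ while augmentations remove leftovers at cumulative rate $\int_0^t |\mathcal{L}_s|/n \, ds$, so by time $T_1$ the cumulative absorption reaches $\Omega(T_1^2/n) = \Omega(\eps n)$, driving $|U_t|$ below $n^{3/4}$.

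\emph{Endgame phase.} Once $|U_t| \le n^{3/4}$, I would switch to a P\'osa-rotation based absorption, processing remaining leftovers one at a time. For each target leftover $u$, spend $O(\log^2 n)$ semi-random rounds sprinkling random edges between $u$, the two current endpoints of $P_t$, and interior path vertices, enough so that—via repeated path rotations—one of the rotated endpoints of $P_t$ becomes adjacent to $u$; then append $u$ and move on. A union bound over the $\le n^{3/4}$ remaining leftovers gives the $O(n^{3/4}\log^2 n)$ overall budget, with a final $O(\log^2 n)$ rounds closing the Hamiltonian path into a cycle via an analogous rotation argument.

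\emph{Main obstacles.} The principal difficulty is the concentration analysis of the bulk phase: since it runs for only $o(n)$ rounds, the differential-equation framework used elsewhere in the paper does not apply directly. Instead I would use Azuma-type concentration for each of $|U_t|$, $|\YY_t|$, $|\mathcal{L}_t|$, leveraging the $O(\log n)$ per-step Lipschitz bound established in \Cref{lem:lipschitz_randomized}, to show that these quantities track a deterministic template with error $o(\sqrt{\eps}n)$ throughout the phase. A secondary technical point is the endgame: P\'osa rotations are classical but are typically analyzed in settings where both endpoints of each added edge are random, whereas in our semi-random setting only $u_{t+1}$ is random and $v_{t+1}$ is chosen; verifying that the boosting lemma still produces a rotated endpoint adjacent to the targeted $u$ within $O(\log^2 n)$ rounds, uniformly over the $\le n^{3/4}$ targets, is where the bulk of the endgame analysis will lie.
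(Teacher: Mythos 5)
Your plan shares the same high-level split as the paper's --- build a reservoir of red vertices, then absorb via path augmentations, then close the cycle --- but the paper organizes the bulk phase as a sequence of $O(\log n)$ iterations, each of which halves the leftover set and, crucially, \emph{uncolours all vertices at the start of each iteration}. This discarding step is not cosmetic: it, together with the degree-greedy choice of $v_t$ (``minimum number of red neighbours''), ensures that each unsaturated vertex receives at most $m_k/j_{k-1}+1 = O(1/\sqrt{\eps})$ red edges \emph{within an iteration}, so that absorbing a vertex destroys a controlled number of red edges and the reservoir stays at size $\Omega(m_k)$ throughout step (iii). Your single continuous bulk phase has no analogous control: you create $\Theta(\sqrt{\eps}\,n)$ red edges whose random ends land in a set of size $O(\eps n)$, so the typical red degree of an unsaturated vertex is $\Theta(1/\sqrt{\eps})$ and grows unboundedly as $\eps \to 0$. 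This breaks your appeal to \Cref{lem:lipschitz_randomized}, which is proved under the hypothesis $n - X(t) \ge n/\log n$ (false here once $\eps < 1/\log n$) and gives an $O(\log n)$ per-step change --- your per-step change can be $\Omega(1/\sqrt{\eps})$, and the Azuma deviation $O\left(\eps^{-1/2}\sqrt{T_1 \log n}\right) = O\left(\eps^{-1/4} n^{1/2}\sqrt{\log n}\right)$ may not be $o(\sqrt{\eps}\,n)$ uniformly over the allowed range of $\eps$. The paper sidesteps all of this by re-initializing each iteration, reducing the analysis to a few Chernoff bounds rather than a full martingale-tracking argument.

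For the endgame, the paper simply runs the same reservoir-plus-augmentation iterations with $m_k = \sqrt{n}$, one absorbed vertex per iteration, giving $n^{1/4}$ iterations of length $O(n^{1/2}\log^2 n)$ each. Your P\'osa-rotation alternative is a genuinely different idea, but it is not developed to the point where it can be checked. In particular, after the main phase, $P_t$ may have no chords at all, so the initial rotation-accessible endpoint set is trivial; you must first create enough chords before the rotation set can grow, and in the semi-random model each round gives you only one edge with one prescribed endpoint. Whether a targeted $u$ can be absorbed within $O(\log^2 n)$ rounds uniformly over all $n^{3/4}$ targets therefore hinges on a boosting lemma you have not stated, and your own ``main obstacles'' paragraph flags exactly this. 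Until that lemma is formulated and proved, the endgame is a gap. I would encourage you to look at the paper's actual clean-up: it is quite short, avoids both P\'osa rotations and martingale concentration entirely, and the closing-to-a-cycle step (colouring left-neighbours of vertices that hit the left endpoint, then waiting for a right-endpoint edge to hit a coloured vertex) is also much simpler than a rotation argument.
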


\begin{remark}
The constant hidden in the $O(\cdot)$ notation does not depend on $\eps$. The strategy used in the clean-up algorithm is similar to that in~\ref{alg:fully_randomized} but the analysis is done in a much less accurate way, as we only need to prove an $o(n)$ bound on the number of steps required to absorb $\eps n$ vertices, assuming $\eps = \eps(n) \to 0$ as $n \to \infty$.
\end{remark}

\begin{proof} [Proof of Lemma~\ref{lem:clean_up}]
Let $j_0=\eps n$. For each $k\ge 1$, let $j_k=(1/2) j_{k-1}$ if $j_{k-1}>n^{1/4}$, and let $j_k=j_{k-1}-1$ otherwise. Clearly, $j_k$ is a decreasing function of $k$. Let $\tau_1$ be the smallest natural number $k$ such that $j_k\le n^{1/4}$. Let $\tau$ be the natural number $k$ such that $j_k=0$. It is easy to check that $\tau_1=O(\log n)$ and $\tau=O(n^{1/4})$.

We use a clean-up algorithm, which runs in iterations. The $k$-th iteration repeatedly absorbs $j_{k-1}-j_k$ vertices into $P$, leaving $j_k$ unsaturated vertices (vertices that have not been added to $P$) in the end. The $k$-th iteration of the clean-up algorithm works as follows.

\begin{itemize}
    \item[(i)] ({\em Initialising}): Uncolour all vertices in the graph;
    \item[(ii)] ({\em Building reservoir}): Let $m_k:=\sqrt{\eps}(1/2)^{k/2}n$ for $k\le \tau_1$ and $m_k:=\sqrt{n}$ if $\tau_1<k\le\tau$. Add $m_k$ semi-random edges as follows. If $u_t$ lands on an unsaturated vertex, a red vertex, or a neighbour of a red vertex in $P$, then let $v_t$ be chosen arbitrarily. The edge $u_tv_t$ will not be used in our construction. Otherwise, colour $u_t$ red and choose an arbitrary $v_t$ among those unsaturated vertices with the minimum  number of red neighbours. Colour $u_tv_t$ red. Note that each red vertex is adjacent to exactly one red edge; 
    \item[(iii)] ({\em Absorbing via path augmentations}): Add semi-random edges as follows. Suppose that $u_t$ lands on $P$ and at least one neighbour of $u_t$ on $P$ is red. (Otherwise, $v_t$ is chosen arbitrarily, and this edge will not be used in our construction.) Let $x$ be such a red vertex (if $u_t$ has two neighbours on $P$ that are red, then select one of them arbitrarily). Let $y$ by the neighbour of $x$ such that $xy$ is red, and let $v_t=y$. Extend $P$ by deleting the edge $xu_t$ and adding the edges $xy$ and $yu_t$. Uncolour all red edges incident to $y$ and all red neighbours of $y$ (which, of course, includes vertex $x$). 
\end{itemize}

\noindent 
Notice that, in each iteration, $m_k\ge n^{1/2}$. Indeed, this is true for $\tau_1<k\le\tau$. On the other hand, if $k\le \tau_1$, then $j_k = \eps n (1/2)^k$ and so $m_k = \sqrt{n j_k} \ge \sqrt{n}$ (in fact, $m_k = \Omega(n^{5/8})$).

Let $T_k$ denote the length of the $k$-th iteration of the clean-up algorithm. It remains to prove that a.a.s.\ $\sum_{k\le \tau} T_k = O(\sqrt{\eps} n + n^{3/4}\log^2 n)$. Let $R_k$ be the number of red vertices obtained after step (ii) of iteration $k$. Clearly, $R_k\le m_k$.  On the other hand, each $u_t$ is coloured red with probability at least $1-j_{k-1}/n-3m_k/n \ge 1 - \eps - 3\sqrt{\eps} \ge 0.95$. Hence, $R_k$ can be stochastically lower bounded by the binomial random variable $\Bin(m_k, 0.95)$. By the Chernoff bound, with probability at least $1-n^{-1}$, $R_k \ge 0.9 m_k$, as $m_k\ge n^{1/2}$. 

First, we consider iterations $k\le \tau_1$. Let $\tilde R_k$ be the number of red vertices at the end of step~(iii). Note that the minimum degree property of step~(ii) ensures that each unsaturated vertex is adjacent to at most $R_k/j_{k-1}+1 \le m_k/j_{k-1}+1$ red vertices. Moreover, exactly $j_{k-1}-j_k=(1/2)j_{k-1}$ vertices are absorbed in step (iii). As a result, 
\[
\tilde R_k\ge R_k-\left(\frac{m_k}{j_{k-1}}+1\right) \cdot \frac{j_{k-1}}{2} \ge 0.9 m_k-\frac{m_k}{2}-\frac{j_{k-1}}{2} \ge 0.3m_k,
\]
as $j_{k-1} = 2 j_k \le 2 \sqrt{\eps} m_k \le 0.1 m_k$. It follows that throughout step (iii), there are at least $0.3 m_k$ red vertices. Thus, for each semi-random edge added to the graph during step (iii), the probability that a path extension can be performed is at least $0.3m_k/n=0.3\sqrt{\eps}(1/2)^{k/2}$. Again, by the Chernoff bound, with probability at least $1-n^{-1}$, the number of semi-random edges added in step (iii) is at most
\[
2(j_{k-1}-j_k) \cdot \frac{2^{k/2}}{0.3\sqrt{\eps}} \le 7\sqrt{\eps}(1/2)^{k/2} n.
\]
Combining the number of semi-random edges added in step (ii), it follows that with probability at least $1-n^{-1}$, $T_k\le m_k+7\sqrt{\eps}(1/2)^{k/2} n = 8\sqrt{\eps}(1/2)^{k/2} n$.

Next, consider iterations $\tau_1<k\le \tau$. In each iteration, exactly one unsaturated vertex gets absorbed. The number of semi-random edges added in step (ii) is $m_k=n^{1/2}$. We have argued above that with probability at least $1-n^{-1}$, $R_k\ge 0.9m_k$. Thus, for each semi-random edge added to the graph, the probability that a path extension can be performed is at least $0.9m_k/n=0.9n^{-1/2}$. By the Chernoff bound, with probability at least $1-n^{-1}$, the number of semi-random edges added in step (iii) is at most $n^{1/2}\log^2 n$. Thus, with probability at least $1-n^{-1}$, $T_k\le n^{1/2}+n^{1/2}\log^2 n \le 2n^{1/2}\log^2 n$. 

By taking the union bound over all $k\le\tau$, since $\tau = O(n^{1/4})$, it follows that a.a.s.\ 
\[
\sum_{k\le \tau}T_k \le \sum_{k\le \tau_1} 8\sqrt{\eps}(1/2)^{k/2} n + \sum_{\tau_1<k\le \tau} 2n^{1/2}\log^2 n 
=O(\sqrt{\eps}n + n^{3/4}\log^2 n). 
\]

We have shown that a.a.s.\ by adding $O(\sqrt{\eps}n + n^{3/4}\log^2 n)$ additional semi-random edges we can construct a Hamiltonian path $P$. To complete the job and turn it into a Hamiltonian cycle, let $u$ and $v$ denote the left and, respectively, the right endpoint of $P$. We proceed in two stages. In the first stage, add $n^{1/2}\log n$ semi-random edges $u_tv_t$ where $v_t$ is always $u$, discarding any multiple edges that could possibly be created. For each such semi-random edge $u_tu$, colour the left neighbour of $u_t$ on $P$ blue. In the second stage, add $n^{1/2}\log n$ semi-random edges $u_tv_t$ where $v_t$ is always $v$. Suppose that some $u_t=x$ is blue in the second stage. Then, a Hamiltonian cycle is obtained by deleting $xy$ from $P$ and adding the edges $xv$ and $uy$, where $y$ is the right neighbour of $x$ on $P$. By the Chernoff bound, a.a.s.\ a semi-random edge added during the second stage hits a blue vertex, completing the proof. 
\end{proof}

By setting $N=0$ we immediately get an algorithm which a.a.s.\ constructs a Hamiltonian cycle in $\hat\alpha n$ steps, where $\hat\alpha \le 1.84887$. To obtain the better bound in Theorem~\ref{thm:main_upper_bound}, we set $N=100$, and the execution of~\ref{alg:degree_greedy} will be analysed in the next subsection.

\begin{theorem}\label{thm:warm_up}
$C_{\mathtt{HAM}} \le \hat{\alpha} \le 1.84887$, where $\hat{\alpha}$ is defined in~(\ref{alpha_star}) with initial conditions for~(\ref{ode1})--(\ref{ode3}) set by $x(0)=y(0)=\ell_1(0)=\ell_2(0)=0$.
\end{theorem}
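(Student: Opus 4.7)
The plan is to instantiate the three-stage framework of Section~\ref{sec:upper_bound_overview} with parameter $N=0$, so that the first (degree-greedy) stage is entirely skipped and the semi-random process is run from the empty graph. In this case the triple $(P,\YY,{\mathcal E})$ handed to~\ref{alg:fully_randomized} is $(\emptyset,\emptyset,\emptyset)$, which vacuously satisfies properties~\ref{item:pair_distribution} and~\ref{item:red_edge_distribution}, and the initial values of the tracked random variables are $X(0)=Y(0)=L_1(0)=L_2(0)=0$. I would therefore take the initial condition for the differential equation method to be $(\hat x,\hat y,\hat \ell_1)=(0,0,0)$ with concentration parameter $\lambda=0$, and verify that no part of the setup in Section~\ref{warm-up-upper-bound} relies on the input being nonempty (in particular, the counts $\mathcal{Q}_0$, $\mathcal{L}^1_0$, $\mathcal{L}^2_0$ are all empty, consistent with the stated properties).

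With these trivial initial conditions, the Boundedness Hypothesis (Lemma~\ref{lem:lipschitz_randomized}) and the Trend Hypothesis (Lemma~\ref{lem:randomized_expected_differences}) apply verbatim, so Lemma~\ref{lem1:concentration_random} yields a.a.s.\ concentration of $(X(t),Y(t),L_1(t),L_2(t))$ around $n\cdot(x(t/n),y(t/n),\ell_1(t/n),\ell_2(t/n))$ uniformly for $0\le t\le(\sigma(\eps)-\delta)n$, where $(x,y,\ell_1,\ell_2)$ is the unique solution of~(\ref{ode1})--(\ref{ode3}) with zero initial data. Setting $\hat\alpha=\lim_{\eps\to 0^+}\sigma(\eps)$ as in~(\ref{alpha_star}), Proposition~\ref{p:boundary} then yields the dichotomy that a.a.s.\ either $\hat\alpha=3$, or $X(t)>(1-\eps)n$ for some $t\le(\hat\alpha+\delta)n$. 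To rule out the first branch I would invoke the numerical integration of the ODE system—carried out in the authors' Julia code—to show $\hat\alpha\approx 1.84887<3$, confirming that a near-Hamiltonian path has been constructed within $\hat\alpha n+o(n)$ rounds.

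The final step is to apply the clean-up procedure of Lemma~\ref{lem:clean_up} (with $\eps=\eps(n)\to 0$ sufficiently slowly) to absorb the remaining $\eps n$ vertices and close the path into a Hamiltonian cycle using only $O(\sqrt{\eps}n+n^{3/4}\log^2 n)=o(n)$ further semi-random edges. Concatenating the two stages, a Hamiltonian cycle is built a.a.s.\ in at most $\hat\alpha n+o(n)$ rounds, which by the definition~(\ref{Cdef}) of $C_{\mathtt{HAM}}$ gives $C_{\mathtt{HAM}}\le\hat\alpha\le 1.84887$. The only nontrivial obstacle beyond invoking the already-established lemmas is the numerical verification that the integral curve of~(\ref{ode1})--(\ref{ode3}) starting at the origin leaves $\mathcal{D}_\eps$ through the face $x=1-\eps$ (rather than running out to $s=3$) and does so at a parameter value below $1.84887$; this is purely a computational check, and it is the only place in the argument where the bound $1.84887$—as opposed to some other constant—actually enters.
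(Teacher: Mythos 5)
Your proposal is correct and follows essentially the same route as the paper: the paper's proof of Theorem~\ref{thm:warm_up} is simply the one-line invocation of Proposition~\ref{p:boundary}, the numerical value of $\alpha^*$, and Lemma~\ref{lem:clean_up}, which you have unpacked faithfully (setting $N=0$, noting that the empty input trivially satisfies~\ref{item:pair_distribution}--\ref{item:red_edge_distribution}, applying the concentration Lemma~\ref{lem1:concentration_random}, and finishing with the clean-up stage). One small imprecision: you write ``concentration parameter $\lambda=0$,'' but Theorem~\ref{thm:differential_equation_method} requires $\lambda \ge \delta\min\{S,L^{-1}\}+R/n>0$; since the initial values are exactly zero, the Initial Condition holds for any such $\lambda=o(1)$, so the argument is unaffected.
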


\begin{proof} 
This follows from Proposition~\ref{p:boundary}, the numerical value of $\alpha^*$, and Lemma~\ref{lem:clean_up}.
\end{proof}

\subsection{A Degree-Greedy Algorithm} \label{sec:degree_greedy}

Let us suppose that after $t \ge 0$ steps, we have constructed the graph $G_t$  which contains the path $P_t$ and a collection of vertex disjoint edges $\scr{Y}_t$ where $V(\scr{Y}_t) \subseteq [n] \setminus V(P_t)$. We refer to $V(P_t)$ (respectively, $[n] \setminus V(P_t)$) as the \textit{saturated} (respectively, \textit{unsaturated}) vertices of $[n]$.

As before, our algorithm uses path augmentations, and we colour the edges and vertices of $G_t$ to help keep track of when these augmentations can be made. We now use two colours, namely red and blue, to distinguish between edges which are added randomly (red) and greedily (blue). Our blue edges will be chosen so as to minimize the number of blue edges destroyed by path augmentations in future rounds. 

We say that $x \in V(P_t)$ is \textit{blue}, provided it is adjacent to a single blue edge of $G_t$, and no red edge. Similarly, $x \in V(P_t)$ is \textit{red}, provided it is adjacent to a single red edge of $G_t$, and no blue edge. Finally, we say that $x \in V(P_t)$ is \textit{magenta (mixed)}, provided it is adjacent to a single red edge, and a single blue edge. We denote the blue vertices, red vertices, and magenta (mixed) vertices by $\scr{B}_t, \scr{R}_t$ and $\scr{M}_t$, respectively, and define $\scr{L}_t := \scr{B}_t \cup \scr{R}_t \cup \scr{M}_t$ to be the \textit{coloured} vertices. By definition, $\scr{B}_t, \scr{R}_t$ and $\scr{M}_t$ are disjoint. It will be convenient to once again define $U_t$ as the vertices not in $P_t$ or any edge of $\scr{Y}_t$. Finally, we maintain a set of \textit{permissible} vertices $\scr{Q}_t$ which indicate which vertices of the path are allowed to be coloured blue. Specifically, using the same reasoning as before, we ensure the following:
\begin{enumerate}
    \item[(i)] $|\scr{Q}_t| = \max\{|V(P_t)| - 5|\scr{L}_t|, 0\}$. \label{eqn:greedy_permissible_size}
    \item[(ii)] If $\scr{L}_t \neq \emptyset$, then each $x \in \scr{Q}_t$ satisfies $d_{P_t}(x,\scr{L}_t) \ge 3$. \label{eqn:greedy_permissible_distance}
\end{enumerate}

Upon the arrival of $u_\tp$, there are six main cases our algorithm must handle. The first three cases involve extending $P_t$ or $\scr{Y}_t$, whereas the latter three describe how to add edges so that the path can be extended in later rounds.
\begin{enumerate}
    \item If $u_\tp$ lands in $U_t$ and $|U_t| \ge 2$, then choose $v_\tp$ u.a.r. amongst $U_t \setminus \{u_\tp\}$ and extend $\scr{Y}_t$.
    \item If $u_\tp$ lands in $V(\scr{Y}_t)$, then greedily extend $P_{t}$. \label{eqn:greedy_extend}
    \item If $u_\tp$ lands at path distance one from $x \in \scr{L}_t$, then augment $P_t$ via a coloured edge of $x$, where a blue edge is taken over a red edge if possible. \label{eqn:path_augment}
    \item If $u_\tp$ lands in $\scr{Q}_t$, then choose $v_\tp$ u.a.r.\ amongst those vertices of $U_t$ with \textit{minimum} blue degree. The edge $u_\tp v_\tp$ is then coloured blue, and a single blue vertex is created. \label{eqn:blue_vertex} 
    \item If $u_\tp$ lands in $\scr{R}_t$, then choose $v_\tp$ u.a.r.\ amongst those vertices of $U_t$ with minimum blue degree. The edge $u_\tp v_\tp$ is then coloured blue, and a single red vertex is converted to a magenta (mixed) vertex. \label{eqn:red_vertex} 
    \item If $u_\tp$ lands in $\scr{B}_t$, then choose $v_\tp$ u.a.r.\ amongst $U_t$ and colour $u_\tp v_\tp$ red. This case converts a blue vertex to a magenta vertex. \label{eqn:magenta_vertex}
\end{enumerate}
In all the remaining cases, we choose $v_\tp$ arbitrarily, and interpret the algorithm as \textit{passing} on the round. As in \ref{alg:fully_randomized}, we ensure that all of the algorithm's coloured vertices are at path distance at least $3$ from each other, and we define a coloured vertex to be \textit{well-spaced} in the same way. Step $\tp$ of the algorithm when $u_\tp$ is drawn u.a.r.\ from $[n]$ is formally described by the \ref{alg:degree_greedy} algorithm. We describe how the algorithm chooses $v_\tp$, how it constructs $P_\tp$ and $\scr{Y}_\tp$, and how it adjusts the colours of $G_\tp$, thus updating $\scr{B}_t, \scr{M}_t$ and $\scr{R}_t$. 
\begin{varalgorithm}{$\mathtt{DegreeGreedy}$}
\caption{Step $\tp$} 
\label{alg:degree_greedy}
\begin{algorithmic}[1]
\If{$u_\tp \in U_t$ and $|U_{t}| \ge 2$}     \Comment{greedily extend $\scr{Y}_t$}
\State Choose $v_\tp$ u.a.r. from $U_t \setminus V( \scr{Y}_t) \cup \{u_\tp\}$.
\State Set $P_\tp = P_t$ and $\YY_\tp=\YY_t\cup \{u_\tp v_\tp\}$.  
\ElsIf{$u_\tp y \in \scr{Y}_t$ for some $y$} \Comment{greedily extend the path}
\State Let $v_\tp$ be an arbitrarily chosen endpoint of $P_t$. 
\State Update $P_\tp$ from $P_t$ by adding edges $u_\tp v_\tp$ and $u_\tp y$. 
\State Set $\YY_\tp=\YY_{t}\setminus \{u_\tp y\}$.
\State Uncolour all of the edges adjacent to $u_\tp$ or $y$. \label{line:unsaturated_uncolour}

\ElsIf{$d(u_\tp, \scr{L}_t) =1$}     \Comment{path augment via coloured vertices}
\State Let $x \in \scr{L}_t$ be the (unique) coloured vertex adjacent to $u_\tp$

\If{$x$ is red}
\State Denote $x y \in E(G_t)$ the red edge of $x$.
\Else{}         \Comment{$x$ is blue or magenta}
\State Denote $x y \in E(G_t)$ the blue edge of $x$.
\EndIf
\If{$y \in U_t$}
\State Set $v_\tp = y$
\State Update $P_\tp$ from $P_{t}$ by adding edges $u_\tp v_\tp, v_\tp x$ and removing edge $u_\tp x$.
\State Set $\scr{Y}_\tp = \scr{Y}_t$.
\ElsIf{$yy' \in \scr{Y}_t$}
\State Set $v_\tp = y'$
\State Update $P_\tp$ from $P_t$ by adding edges $u_\tp v_\tp, v_\tp y, y x$ and removing edge $u_\tp x$.
\State Set $\scr{Y}_\tp = \scr{Y}_t \setminus \{yy'\}$
\EndIf

\State Uncolour all of the edges adjacent to $y$ (as well as $y'$ if applicable). \label{line:saturated_uncolour}

\Else                                         \Comment{construct coloured vertices or pass}
\If{$u_\tp \in \scr{Q}_t \cup \scr{R}_t$}   
\State Choose $v_\tp$ u.a.r.\ from the vertices of $U_t$ of minimum blue degree.
\State Colour $u_\tp v_\tp$ blue.       \Comment{create a blue or magenta vertex} 

\ElsIf{$u_\tp \in \scr{B}_t$}
\State Choose $v_\tp$ u.a.r.\ from $U_t$.
\State Colour the edge $u_\tp v_\tp$ red.  \Comment{create a magenta vertex}

\Else{} \Comment{pass on using edge $u_\tp v_\tp$}
\State Choose $v_\tp$ arbitrarily \ from $[n]$. 
\EndIf

\State Set $P_\tp = P_t, \scr{Y}_\tp = \scr{Y}_t$.

\EndIf
\State Update $\scr{Q}_\tp$ if needed, such that $|\scr{Q}_\tp| = |V(P_\tp)| - 5|\scr{L}_\tp|$.    \Comment{update permissible vertices}
\end{algorithmic}
\end{varalgorithm}
Note that red vertices are only created when the blue edges of magenta vertices are uncoloured as a side effect of path extensions and augmentations (see lines \eqref{line:unsaturated_uncolour} and \eqref{line:saturated_uncolour} of \ref{alg:degree_greedy}).

For each $t \ge 0$, define the random variables $X(t) := |V(P_t)|$, $B(t):= |\scr{B}_t|$, $R(t) := |\scr{R}_t|$, $M(t) := |\scr{M}_t|$, $L(t):=|\scr{L}_t|=B(t)+R(t)+M(t)$, and $Y(t):= |V(\scr{Y}_t)|$. For each $q \ge 0$  define $D_{q}(t)$ to be the number of unsaturated vertices adjacent to precisely $q$ blue edges.  We define the stopping time $\tau_{q}$ to be the smallest $t \ge 0$ such that $D_{j}(t) = 0$ for all $j<q$, and $D_{q}(t) > 0$. It is easy to check that $\tau_q$ is well-defined and is non-decreasing in $q$. By definition, $\tau_{0}=0$. Let us refer to \textit{phase} $q$ as those $t$ such that $\tau_{q-1} \le t < \tau_{q}$. Observe that during phase $q$, each unsaturated vertex (i.e., vertex of $[n] \setminus V(P_t)$) has blue degree $q-1$ or $q$.

\subsection{Analyzing phase $q$}

Suppose that $\tau_{q-1} \le t < \tau_{q}$. It will be convenient to denote $D(t):=D_{q-1}(t)$. Given $k_1, k_2 \ge 0$, we say that $y \in [n] \setminus V(P_t)$ is of \textit{type} $(k_1,k_2)$, provided it is adjacent to $k_1$ blue edges within $\scr{B}_t$ and $k_2$ blue edges within $\scr{M}_t$. Similarly, $x \in \scr{B}_t \cup \scr{M}_t$ is of type $(k_1,k_2)$, provided its (unique) \textit{blue} edge connects to a vertex of type $(k_1,k_2)$. We denote the number of unsaturated vertices of type $(k_1,k_2)$ by $C_{k_1,k_2}(t)$, the blue vertices of type $(k_1,k_2)$ by $B_{k_1,k_2}(t)$, and the magenta (mixed) vertices of type $(k_1,k_2)$ by $M_{k_1,k_2}(t)$. Observe that $B_{k_1,k_2}(t) = k_1 \cdot C_{k_1,k_2}(t)$ and $M_{k_1,k_2}(t) = k_2 \cdot C_{k_1,k_2}(t)$. Moreover, $D_{j}(t) = \sum_{\substack{k_1, k_2: \\ k_{1} + k_{2} = j}} C_{k_1,k_2}(t)$.

In \Cref{sec:inductive_functions}, we inductively define the functions $x, r, y$ and $c_{k_1,k_2}$ for $k_1 + k_2 \ge 0$, as well as a constant $\sigma_q \ge 0$, such that the following lemma holds:

\begin{lemma}\label{lem:inductive}
A.a.s.\ $\tau_{q} \sim \sigma_{q} n$ for every $0 \le q\le N$.\footnote{For functions $f = f(n)$ and $g=g(n)$, $f \sim g$ is shorthand for $f =(1+ o(1)) g$.} Moreover, at step $\tau_{q}$, a.a.s.
\begin{eqnarray*}
&&X(\tau_{q}) \sim x(\sigma_{q})n, \quad R(\tau_{q}) \sim r(\sigma_{q})n, \quad Y(\tau_q) \sim y(\sigma_q)n, \\
&&C_{k_1,k_2}(\tau_{q}) \sim c_{k_1,k_2}(\sigma_{q})n \quad \mbox{for all $(k_1,k_2)$ where $k_1+k_2=q$.}
\end{eqnarray*}
\end{lemma}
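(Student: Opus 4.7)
My plan is to prove Lemma~\ref{lem:inductive} by induction on $q$, in each phase applying the differential equation method to a finite collection of scaled random variables whose expected one-step changes are matched by the ODEs that define the functions $x, r, y, c_{k_1,k_2}$ in Section~\ref{sec:inductive_functions}. For the base case $q=0$, we have $\tau_0=0$ by definition, and the initial values at $t=0$ match $x(0), r(0), y(0), c_{k_1,k_2}(0)$ by the initialization from which the process starts (no path, no coloured edges, every unsaturated vertex of blue degree $0$, so only $c_{0,0}$ is nonzero). For the inductive step, assume the claim for $q-1$. Then at time $\tau_{q-1}$, all the relevant counts are concentrated around $n$ times their target values, and throughout phase $q$ each unsaturated vertex has blue degree $q-1$ or $q$, so the collection of random variables $\{X(t), R(t), Y(t), (C_{k_1,k_2}(t))_{k_1+k_2\in\{q-1,q\}}\}$ is finite and closed under the one-step dynamics of \ref{alg:degree_greedy}.

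Within phase $q$, I would verify the Trend and Boundedness Hypotheses for these variables, exactly as was done for \ref{alg:fully_randomized} in Lemmas~\ref{lem:lipschitz_randomized} and~\ref{lem:randomized_expected_differences}. The Boundedness Hypothesis follows from the same Chernoff argument: conditional on the coarse macroscopic history $H_t$, the distribution of blue, red, and $\scr{Y}_t$-edges satisfies an analogue of properties~\ref{item:pair_distribution}--\ref{item:red_edge_distribution} (blue edges incident to each unsaturated vertex are determined by the uniform minimum-degree choice, which can be viewed as pairing uniformly among vertices of the current minimum blue degree), so the number of coloured edges incident to any single unsaturated vertex is dominated by $\Bin(3n, O(\log n/n))$ and hence $O(\log n)$ with high probability. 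For the Trend Hypothesis, I would compute $\E[\Delta Z(t)\mid H_t]$ for each tracked variable $Z$ by conditioning on which of the six cases of \ref{alg:degree_greedy} occurs: $\YY$-extension, path extension, path augmentation via a coloured edge, or the creation/upgrade of a blue/magenta vertex. The probability of each case is expressible in terms of $X(t)/n$, $L(t)/n$, $Y(t)/n$, and the $C_{k_1,k_2}(t)/n$, and the effect on the type counts is read off from how uncolouring propagates at Lines~\ref{line:unsaturated_uncolour} and~\ref{line:saturated_uncolour}.

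The main obstacle will be the bookkeeping for $\E[\Delta C_{k_1,k_2}(t)\mid H_t]$ under the degree-greedy rule. When $u_\tp \in \scr{Q}_t \cup \scr{R}_t$, the vertex $v_\tp$ is drawn u.a.r.\ among those $U_t$-vertices of minimum blue degree (which, throughout phase $q$, equals $q-1$), so the probability that $v_\tp$ is of type $(k_1,k_2)$ with $k_1+k_2=q-1$ is proportional to $C_{k_1,k_2}(t)$, and landing on such a vertex shifts it (and its newly-created blue neighbour on the path) into type $(k_1+1,k_2)$ or $(k_1,k_2+1)$ depending on whether the new path vertex is blue or magenta. In parallel, path extensions and augmentations uncolour edges, which can demote both blue and magenta vertices and alter the types of their unsaturated neighbours — effects that must be expressed in terms of the $C_{k_1,k_2}$ and of $Y(t)/(n-X(t))$ as in the warm-up analysis. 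Once all the expected changes are assembled, they match, by construction, the ODE system of Section~\ref{sec:inductive_functions}, and Wormald's theorem yields concentration on compact subdomains. The stopping time $\tau_q$ is then identified with the first $s$ at which $\sum_{k_1+k_2=q-1} c_{k_1,k_2}(s)=0$, which is exactly $\sigma_q$ by definition; continuity plus the a.a.s.\ concentration just established gives $\tau_q\sim\sigma_q n$ and the claimed asymptotics of all tracked variables at step $\tau_q$, completing the inductive step.
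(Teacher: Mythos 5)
Your proposal follows the paper's inductive strategy and DEM set-up closely, and most of the steps (base case, Trend and Boundedness Hypotheses, the bookkeeping for $\Delta C_{k_1,k_2}$ via the six cases of \ref{alg:degree_greedy}, and the uncolouring cascade at Lines~\ref{line:unsaturated_uncolour} and~\ref{line:saturated_uncolour}) are exactly right. But the final step is incomplete, and this is precisely where the paper has to work hardest.

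You write that ``continuity plus the a.a.s.\ concentration just established gives $\tau_q\sim\sigma_q n$,'' but this is a genuine gap. The differential equation method only gives concentration on compact subdomains of $\scr{D}_\eps$, i.e.\ up to time $(\sigma(\eps)-\xi)n$ for fixed $\eps,\xi>0$. It does \emph{not} give concentration at the boundary, and here the boundary is singular: the ODE right-hand sides contain $c_{k_1,k_2}/d$, and $d=\sum_{j+h=q-1}c_{j,h}$ tends to zero as $s\to\sigma_q^-$, so the domain must keep $d>\eps$. Consequently, the DEM shows only that at some step $t$ near $(\sigma_q-\xi)n$ one has $D(t)<\eps n$; it does \emph{not} show $D(t)=0$. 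Two further ingredients are needed. First, you must rule out the other ways of exiting the domain (namely $X(t)/n\to 1-\eps$ and $t/n\to 3$), which the paper does via Proposition~\ref{prop:degree_boundary} together with the numerical fact that $\sigma_q<3$ and $x(\sigma_q)<1$. Second, once $D(t)<\eps n$, you need a separate ``finishing off'' argument to absorb those last $\eps n$ minimum-degree vertices in $o(n)$ rounds. The paper observes that $(X(t)-5L(t))/n\ge p$ for some absolute constant $p>0$ (again read off from the numerics), so each round destroys a type-$(q-1)$ vertex with probability at least $p$, and a Chernoff bound shows all such vertices are gone after $O(\eps n/p)=o(n)$ more steps, which is what yields $\tau_q/n\to\sigma_q$. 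Without this tail argument, you cannot conclude that $\tau_q$ is ever reached close to $\sigma_q n$, only that the tracked variables are close to their ODE trajectories strictly inside the phase.

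A minor side remark: for the Boundedness Hypothesis, you invoke a ``uniform minimum-degree pairing'' picture for the blue edges, but this isn't really necessary. During phase $q$ every unsaturated vertex has blue degree in $\{q-1,q\}\subseteq\{0,\ldots,N\}$ deterministically, so blue degrees are automatically $O(1)$; the only stochastic bound needed is on the red degree, exactly as in Lemma~\ref{lem:lipschitz_randomized}, which is what the paper's Lemma~\ref{lem:boundedness_greedy} does. This simplifies rather than damages your argument, but it is worth stating the correct reason.
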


Although the method in the proof of Lemma~\ref{lem:inductive} is similar to that of Lemmas~\ref{lem:lipschitz_randomized},~\ref{lem:randomized_expected_differences},~\ref{lem1:concentration_random} and Proposition~\ref{p:boundary}, the analysis is much more intricate and involved. We postpone the proof until afterwards, and first complete the proof of Theorem~\ref{thm:main_upper_bound}.

\subsection{Proving \Cref{thm:main_upper_bound} assuming \Cref{lem:inductive}} \label{sec:proof_main}

\begin{proof}[Proof of Theorem~\ref{thm:main_upper_bound}] 
Set $N=100$. By Lemma~\ref{lem:inductive}, the execution of \ref{alg:degree_greedy} ends at some step $t_0\sim \sigma_N n$. Moreover, $X(t_0)\sim x(\sigma_N)n$, $Y(t_0) \sim y(\sigma_N)n$, $R(t_0) \sim r(\sigma_N)n$ and $M(t_0) \sim m(\sigma_N)n$. Numerical computations show that $\sigma_N\approx 1.80249$. Let $P$ be the path constructed after the first $t_0$ rounds, $\scr{Y}$ be the edges of $\scr{Y}_{t_0}$, and $\scr{E}$ be the red edges adjacent to the vertices of $\scr{M}_{t_0} \cup \scr{R}_{t_0}$.  By the definition of \ref{alg:degree_greedy}, in particular by the way that $\YY_t$ is extended, and the way that the red edges are created, $\YY$ has the uniform distribution over all possible $|\YY|$ pairs over vertices that are not on the path $P$; and for each red edge, its end that is not on the path $P$ is also uniformly distributed. Thus, $(P, \scr{Y},\scr{E})$ has the distribution required for the analysis of \ref{alg:fully_randomized}. Let 
\begin{align*}
\hat{x}&:=x(\sigma_N) \approx 0.99991\\
\hat{y}&:=y(\sigma_N)\approx 0.0000029724 \\
\hat{\ell_1}&:=m(\sigma_N)+r(\sigma_N)\approx 0.00019429.  
\end{align*}
Then, $\hat{x}$, $\hat{y}$, and $\hat{\ell}_{1}$ satisfy $|P| \sim \hat{x}n$, $|\scr{Y}| \sim \hat{y}n$ and $|\scr{E}| \sim \hat{\ell}_1 n$. (The final equation holds since each vertex of $\scr{M}_{t_0} \cup \scr{R}_{t_0}$ is adjacent to one red edge.)

We next execute \ref{alg:fully_randomized} with initial input $(P, \scr{Y}, \scr{E})$. Let $\alpha^*$ be as defined in (\ref{alpha_star}) where the initial conditions to the differential equations (\ref{ode1})--(\ref{ode3}) are set by $x(0)=\hat{x}$, $\ell_1(0)= \hat{\ell}_1$ and $\ell_{2}(0)=0$. Numerical computations show that $\alpha^*\approx 0.014468$. By Proposition~\ref{p:boundary} and the fact that  $\alpha^*<3$, the execution of the first two stages (\ref{alg:degree_greedy} and \ref{alg:fully_randomized}) finishes at some step $(\sigma_N + \alpha^* + o(1))n \le 1.81696 n$, and the number of unsaturated vertices remaining is $o(n)$. Finally, the clean-up algorithm constructs a Hamiltonian cycle with an additional $o(n)$ steps by Lemma~\ref{lem:clean_up}. The theorem follows.
\end{proof}

\subsection{Proving Lemma \ref{lem:inductive}} \label{sec:inductive_functions}

We once again must first argue that our random variables cannot change drastically in one round during phase $q$.
\begin{lemma}[Lipschitz Condition -- \ref{alg:degree_greedy}] \label{lem:boundedness_greedy}
If $|\Delta C(t)| := \max_{\substack{k_1, k_2 \in \Nn \cup \{0\}: \\ k_1 + k_2 \in \{q-1,q\}}} |\Delta C_{k_1,k_2}(t)|$, then with probability $1- O(n^{-1})$,
$$\max\{ |\Delta X(t)|, |\Delta C(t)|, |\Delta R(t)|, |\Delta Y(t)| \} = O(\log n)$$
for all $\tau_{q-1} \le t < \tau_{q}$ with $n - X(t) = \Omega(n)$. 
\end{lemma}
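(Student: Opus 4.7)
My plan is to handle the four random variables in two groups. The path size $X(t)$ and the pair-cover size $Y(t)$ admit deterministic small-constant bounds, while the coloured counts $R(t)$ and $|\Delta C(t)| = \max_{k_1,k_2} |\Delta C_{k_1,k_2}(t)|$ will be bounded by combining a \emph{deterministic} bound on blue degree (coming from the phase structure) with a Chernoff bound on red degree that mirrors the argument used in Lemma~\ref{lem:lipschitz_randomized}.

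For $|\Delta X(t)|$ and $|\Delta Y(t)|$, inspection of \ref{alg:degree_greedy} shows that at most two vertices of $U_t \cup V(\scr{Y}_t)$ are absorbed into the path in any single round, and $\scr{Y}_t$ can gain or lose at most one edge; hence $|\Delta X(t)|, |\Delta Y(t)| \le 2$ deterministically. For $|\Delta R(t)|$ and $|\Delta C(t)|$, the structural point I would make is that the only ways these change in step $\tp$ are (i) a direct recolouring at $u_\tp$, (ii) uncolouring of all coloured edges adjacent to at most two newly absorbed vertices of $U_t \cup V(\scr{Y}_t)$ during a path extension or augmentation (lines~\eqref{line:unsaturated_uncolour} and~\eqref{line:saturated_uncolour}), and (iii) the addition of the single new coloured edge $u_\tp v_\tp$. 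So it suffices to show that with probability $1 - O(n^{-1})$, every unsaturated vertex $v$ has coloured degree $O(\log n)$ throughout the process while $n - X(t) = \Omega(n)$.

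The blue-degree bound is deterministic and is the place where the specific design of \ref{alg:degree_greedy} pays off: by the definition of phase $q$, every vertex of $U_t$ has blue degree exactly $q-1$ or $q$ throughout phase $q$, and since the algorithm runs for only $N = 100$ phases, this is $O(1)$. For the red degree, I would follow the Chernoff argument from the proof of Lemma~\ref{lem:lipschitz_randomized}: red edges are created only when $u_\tp \in \scr{B}_t$, in which case $v_\tp$ is chosen u.a.r.\ from $U_t$; using $|U_t| \ge n - X(t) = \Omega(n)$, each such selection hits a fixed $v$ with probability $O(1/n)$. Since at most $3n$ red edges are ever created, the red degree at $v$ is stochastically dominated by $\Bin(3n, O(1/n))$, which has constant mean, so by Chernoff it exceeds $O(\log n)$ with probability $O(n^{-3})$. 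A union bound over the $O(n^2)$ pairs $(v,t)$ then yields the claim.

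The only bookkeeping subtlety I expect is in (ii): absorbing a single vertex $v$ of blue degree $k$ can change the type of up to $k$ other unsaturated vertices simultaneously, so several counts $C_{k_1,k_2}$ are modified in one step. I would verify that each individual $C_{k_1,k_2}(t)$, rather than the sum over types, shifts by at most the coloured degree of $v$, so that the maximum defining $|\Delta C(t)|$ remains $O(\log n)$. With the $O(1)$ blue and $O(\log n)$ red degree bounds just established, this accounting is immediate and completes the proof.
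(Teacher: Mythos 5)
Your proposal follows the same strategy as the paper: a Chernoff/union-bound argument on red degree together with the deterministic bound on blue degree coming from the phase structure. Indeed, the paper's terse proof of Lemma~\ref{lem:boundedness_greedy} simply invokes Lemma~\ref{lem:lipschitz_randomized} for the red-edge bound and remarks that ``$q \le N$ is a constant which does not depend on $n$,'' which is exactly your deterministic $O(1)$ blue-degree observation. You correctly identify that $|\Delta X|,|\Delta Y|\le 2$ deterministically, that $|\Delta R|$ and $|\Delta C|$ are driven by the coloured degrees of the at most two absorbed vertices, and that the red degree at a fixed unsaturated vertex is dominated by a binomial with $O(\log n)$ mean, so the conclusion follows by union bounding over vertices and time steps.

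Two minor points. First, you write ``$|U_t| \ge n - X(t)$,'' which has the inequality backwards: $|U_t| = n - X(t) - Y(t) \le n - X(t)$, and the hypothesis $n - X(t) = \Omega(n)$ alone does not deterministically give $|U_t| = \Omega(n)$; one needs that $Y(t)$ stays bounded away from $n - X(t)$ (which the process dynamics do ensure, and which the paper likewise elides in Lemma~\ref{lem:lipschitz_randomized}). Second, a small sharpening of your bookkeeping: when a vertex $y$ is absorbed, uncolouring its \emph{blue} edges turns path vertices from $\scr{B}_t$ to uncoloured or from $\scr{M}_t$ to red, neither of which alters the type of any remaining unsaturated vertex (types count only blue edges); it is only the uncolouring of $y$'s \emph{red} edges (turning magenta path vertices blue) that propagates to other unsaturated vertices' types. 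So $|\Delta C_{k_1,k_2}|$ is controlled by the red degree of the absorbed vertices alone, while the deterministic blue-degree bound is what is genuinely needed for $|\Delta R(t)|$. Neither point changes the conclusion; your argument is in essence the paper's.
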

\begin{proof}
Since $q \le N$ is a constant which does not depend on $n$, we can apply the same argument to bound the red edges of each $\Delta C_{k_1,k_2}(t)$ as in Lemma \ref{lem:lipschitz_randomized}, and then union bound over all $k_1,k_2 \ge 0$ such that $k_1 + k_2 \in \{q-1,q\}$.
\end{proof}

Let $H_t$ denote the history of the above random variables during the first $t$ rounds. We now state the conditional expected differences of our random variables, where we assume that $\tau_{q-1} \le t < \tau_{q}$ is such that $n - X(t) = \Omega(n)$. It will be convenient to define auxiliary random variables $A(t):= 2 Y(t)/(1-X(t))$ and $\Gamma(t):=1 + Y(t)/(1-X(t))$. Then,
\begin{eqnarray}\label{eqn:degree_x_change}
\E[\Delta X(t)\mid H_t]&=& \frac{2 Y(t)}{n}  + \frac{2 L(t)}{n}\ \Gamma(t) +O(1/n)
\end{eqnarray}
and
\begin{equation} \label{eqn:degree_y_change}
\E[ \Delta Y(t) \mid H_t] = -\frac{2 Y(t)}{n} + 2 \left(1 -\frac{X(t) -Y(t)}{n} \right) - \frac{2 L(t)}{n} \ A(t).
\end{equation}
We omit the proofs of \eqref{eqn:degree_x_change} and \eqref{eqn:degree_y_change}, as the derivation is the same as the analogous equations of \Cref{lem:randomized_expected_differences}. For the remaining random variables, we state the expected differences and derive them afterwards.

First, consider $\Delta R(t)$:
\begin{eqnarray} \label{eqn:degree_r_change}
\E[\Delta R(t) \mid H_t] &=& \frac{Y(t)}{n} \left(\frac{2M(t)}{n-X(t)} -\frac{2R(t)}{n-X(t)} \right) -\frac{2 (B(t) +M(t)) }{n} \frac{R(t) \Gamma(t)}{n - X(t)}  \nonumber \\
&&+ \sum_{\substack{j,h: \\ j+ h \in \{q-1,q\}}} \frac{2 B_{j,h}(t)}{n} \left( h + \frac{Y(t)}{n-X(t)} \frac{M(t)}{n - X(t)} \right) \nonumber \\
&&+ \sum_{\substack{j,h: \\ j+ h \in \{q-1,q\}}} \frac{2 M_{j,h}(t)}{n} \left( h + \frac{Y(t)}{n-X(t)} \frac{M(t)}{n - X(t)} \right)  \nonumber \\
&&-\frac{2 R(t) }{n} \left(1 + \frac{R(t) \Gamma(t)}{n - X(t)} \right) + \frac{2 R(t)}{n} \frac{M(t) \Gamma(t)}{n-X(t)} - \frac{R(t)}{n} +O(1/n).
\end{eqnarray}
If $k_1 + k_2 = q-1$, then $\Delta C_{k_1,k_2}(t)$ satisfies: 
\begin{eqnarray} \label{eqn:degree_c_min_change}
\E[ \Delta C_{k_1,k_2}(t) \mid H_t] 
&=& \frac{Y(t)}{n} \left( \frac{2 M_{k_1 -1,k_2 +1}(t)}{n - X(t)} \cdot \bm{1}_{k_1 >0} -\frac{2 C_{k_1,k_2}(t)}{n - X(t)} - \frac{2 M_{k_1,k_2}(t)}{n - X(t)} \right) \nonumber \\
&&+ \frac{2 (B(t) +M(t))}{n} \left( \frac{M_{k_1 -1,k_2 +1}(t)}{n - X(t)} \cdot \bm{1}_{k_1 > 0} - \frac{M_{k_1,k_2}(t)}{n - X(t)} \right) \Gamma(t)  \nonumber \\
&&- \frac{2(B(t) +M(t))}{n} \frac{A(t)}{2} \frac{C_{k_1,k_2}(t)}{n-X(t)} - \frac{2 B_{k_1,k_2}(t) }{n} - \frac{2  M_{k_1,k_2}(t)}{n} \nonumber\\
&&+ \frac{2 R(t)}{n } \left( \frac{M_{k_1 -1,k_2 +1}(t)}{n - X(t)} \cdot \bm{1}_{k_1 >0} - \frac{M_{k_1,k_2}(t)}{n - X(t)} - \frac{C_{k_1,k_2}(t)}{n - X(t)} \right) \Gamma(t) \nonumber \\
&&-\frac{(X(t)- 5 L(t))}{n} \frac{C_{k_1,k_2}(t)}{D(t)}  -\frac{R(t)}{n} \frac{C_{k_1,k_2}(t)}{D(t)} \nonumber \\
&&+ \frac{B_{k_1 +1,k_2 -1}(t)}{n} \cdot \bm{1}_{k_2 > 0} -\frac{B_{k_1,k_2}(t)}{n} + O(1/n).
\end{eqnarray}
When $k_1 + k_2 =q$, two terms from the above expression are modified slightly, and have their signs reversed:
\begin{eqnarray} \label{eqn:degree_c_max_change}
\E[ \Delta C_{k_1,k_2}(t) \mid H_t] 
&=& \frac{Y(t)}{n} \left( \frac{2 M_{k_1 -1,k_2 +1}(t)}{n - X(t)} \cdot \bm{1}_{k_1 >0} -\frac{2 C_{k_1,k_2}(t)}{n - X(t)} - \frac{2 M_{k_1,k_2}(t)}{n - X(t)} \right) \nonumber \\
&&+ \frac{2 (B(t) +M(t))}{n} \left( \frac{M_{k_1 -1,k_2 +1}(t)}{n - X(t)} \cdot \bm{1}_{k_1 > 0} - \frac{M_{k_1,k_2}(t)}{n - X(t)} \right) \Gamma(t) \nonumber \\
&&- \frac{2(B(t) +M(t))}{n} \frac{A(t)}{2} \frac{C_{k_1,k_2}(t)}{n-X(t)} - \frac{2 B_{k_1,k_2}(t) }{n} - \frac{2  M_{k_1,k_2}(t)}{n} \nonumber\\
&&+ \frac{2 R(t)}{n } \left( \frac{M_{k_1 -1,k_2 +1}(t)}{n - X(t)} \cdot \bm{1}_{k_1 >0} - \frac{M_{k_1,k_2}(t)}{n - X(t)} - \frac{C_{k_1,k_2}(t)}{n - X(t)} \right) \Gamma(t) \nonumber \\
&&+\frac{(X(t)- 5 L(t))}{n} \frac{C_{k_1-1,k_2}(t)}{D(t)} +\frac{R(t)}{n} \frac{C_{k_1,k_2-1}(t)}{D(t)}  \nonumber  \\
&&+ \frac{B_{k_1 +1,k_2 -1}(t)}{n} \cdot \bm{1}_{k_2 > 0} -\frac{B_{k_1,k_2}(t)}{n} + O(1/n).
\end{eqnarray}

In order to prove the expected differences, we analyze the expected values of the random variables $\Delta R(t)$ and $\Delta C_{k_1,k_2}(t)$ when $u_\tp$ lands in a subset $A \subseteq [n]$ for a number of choices of $A$. More precisely, we derive tables for $\mb{E}[ \Delta R(t) \cdot \bm{1}_{u_\tp \in A} \mid H_t]$ and $\mb{E}[ \Delta C_{k_1,k_2}(t) \cdot \bm{1}_{u_\tp \in A} \mid H_t]$ when $A \subseteq [n]$ varies across a number of subsets. Since these are disjoint subsets of $[n]$, and the random variables are $0$ if $u_\tp$ lands outside of these subsets, we can sum the second column entries to get the claimed expected differences. Note that the entries of our tables do \textit{not} contain the often necessary $O(1/n)$ term. 

In all our below explanations, we abuse notation and simultaneously identify our random variables as sets (i.e., $C_{k_1,k_2}(t)$ denotes the set of unsaturated vertices of type $(k_1,k_2)$ after $t$ steps).
\begin{table}[H]
\caption{Expected Changes to $\Delta R(t)$.}\label{table:red_changes}
\begin{tabular}{|c|c|}
\hline
$A \subseteq [n]$  & $\mb{E}[ \Delta R(t) \cdot \bm{1}_{u_\tp \in A} \mid H_t]$ \\
\hline
$U_t$ & $0$ \\
$V(\scr{Y}_t)$ & $\frac{Y(t)}{n} \left(\frac{2M(t)}{n-X(t)} -\frac{2R(t)}{n-X(t)} \right)$ \\
Path distance $1$ from $\scr{B}_t $  & $-\frac{2 B(t) }{n} \frac{R(t) \Gamma(t)}{n - X(t)} + \sum_{\substack{j,h: \\ j+ h \in \{q-1,q\}}} \frac{2 B_{j,h}(t)}{n} \left( h + \frac{Y(t)}{n-X(t)} \frac{M(t)}{n - X(t)} \right) $  \\ 
Path distance $1$ from $ \scr{M}_t$  & $-\frac{2 M(t) }{n} \frac{R(t) \Gamma(t)}{n - X(t)} + \sum_{\substack{j,h: \\ j+ h \in \{q-1,q\}}} \frac{2 M_{j,h}(t)}{n} \left( h + \frac{Y(t)}{n-X(t)} \frac{M(t)}{n - X(t)} \right) $   \\ 
Path distance $1$ from $\scr{R}_t$  & $-\frac{2 R(t) }{n} \left(1 + \frac{R(t) \Gamma(t)}{n - X(t)} \right) + \frac{2 R(t)}{n} \frac{M(t) \Gamma(t)}{n-X(t)}$ \\
$\scr{R}_t$ & $\frac{-R(t)}{n}$ \\
\hline
\end{tabular}
\end{table}

\begin{proof}[Proof Sketch of Table \ref{table:red_changes}]
We provide complete proofs only of row entries $2$ and $3$, as the remaining entries follow similarly.

In order to see the second entry, expose the blue edges adjacent to $\scr{M}_t$, and the red edges adjacent to $\scr{R}_t$. If we then fix an arbitrary edge $y y' \in \scr{Y}_t$, we know that it is distributed u.a.r. amongst $\binom{[n] \setminus V(P_t)}{2}$. Thus, in expectation there are $2 M(t)/(n-X(t))$ blue edges adjacent to the vertices of $yy'$. Now, if $u_\tp$ lands on $y$ or $y'$, then a path augmentation is made, and these blue edges are destroyed, thus converting $2 M(t)/(n-X(t))$ blue vertices to red vertices in expectation. Since this event occurs with probability $Y(t)/n$, this accounts for the $\frac{Y(t)}{n} \frac{2M(t)}{n-X(t)}$ term. An analogous argument applies to the $\frac{-Y(t)}{n} \frac{2R(t)}{n-X(t)}$ term.

Consider now the third row entry, where we shall first derive the $-\frac{2 B(t) }{n} \frac{R(t)}{n - X(t)} \left(1 + \frac{Y(t)}{n-X(t)} \right)$ term. Suppose that $b$ is a blue vertex, with blue edge $b x$. Observe that there are $R(t)/(n-X(t))$ red edges adjacent to $x$ in expectation. Thus, $\frac{R(t)}{n - X(t)}$ red edges are destroyed in expectation if $u_\tp$ lands next to a blue vertex. Since $u_\tp$ lands next to a blue vertex with probability $2 B(t)/n$, the $-\frac{2 B(t) }{n} \frac{R(t)}{n - X(t)}$ term follows. The $-\frac{2 B(t) }{n} \frac{R(t)}{n - X(t)}\frac{Y(t)}{1-X(t)}$ term follows by observing that $x =y$ for some $yy' \in \scr{Y}_t$ with probability $\frac{Y(t)}{1-X(t)}$. When this occurs, by computing the expected number of red edges adjacent to $y'$, an additional $\frac{2 B(t) }{n} \frac{R(t)}{n - X(t)}$ red edges are destroyed in expectation. These two cases account for the $-\frac{2 B(t) }{n} \frac{R(t)}{n - X(t)} \left(1 + \frac{Y(t)}{n-X(t)} \right)$ term.

We now derive the term:
$$
\sum_{\substack{j,h: \\ j+ h \in \{q-1,q\}}} \frac{2 B_{j,h}(t)}{n} \left( h + \frac{Y(t)}{n-X(t)} \frac{M(t)}{n - X(t)}\right).
$$
Suppose that $u_\tp$ lands next to a blue vertex of $B_{j,h}(t)$, which occurs with probability $2 B_{j,h}(t)/n$. Let $b$ be such a vertex, and denote its blue edge by $bx$. Now, by definition, there are $h$ magenta vertices whose blue edge is also incident to $x$. We claim that all $h$ of these magenta vertices will be reclassified as red vertices provided the following event occurs:
\begin{itemize}
    \item All the red edges of these $h$ magenta vertices are \textit{not} adjacent to $x$.
\end{itemize}
The latter occurs with probability $\left(1 - \frac{1}{n - X(t)} \right)^h = 1 - O(1/n)$, since $h$ is a constant, and $n-X(t) = \Omega(n)$. By summing over $j,h \in \{q-1,q\}$, this yields the expression $\sum_{\substack{j,h: \\ j+ h \in \{q-1,q\}}} \frac{2 h B_{j,h}(t)}{n} + O\left(\frac{1}{n} \right),$ and the $\sum_{\substack{j,h: \\ j+ h \in \{q-1,q\}}} \frac{2 B_{j,h}(t)}{n} \frac{Y(t)}{n-X(t)} \frac{M(t)}{n - X(t)}$ term follows similarly.
\end{proof}

Consider now $\Delta C_{k_1,k_2}(t)$, where $k_{1} + k_{2} = q-1$. 
\begin{table}[H]
\caption{Expected Changes to $\Delta C_{k_1,k_2}(t)$ for $k_{1} + k_{2} = q-1$.} \label{table:type_changes_min}
\begin{tabular}{|c|c|} 
\hline
$A \subseteq [n]$  & $\mb{E}[ \Delta C_{k_1,k_2}(t) \cdot \bm{1}_{u_\tp \in A} \mid H_t]$ \\
\hline
$U_t$ & $0$ \\
$V(\scr{Y}_t)$ & $\frac{Y(t)}{n} \left( \frac{2 M_{k_1 -1,k_2 +1}(t)}{n - X(t)} \cdot \bm{1}_{k_1 >0} -\frac{2 C_{k_1,k_2}(t)}{n - X(t)} - \frac{2 M_{k_1,k_2}(t)}{n - X(t)} \right)$  \\
Path distance $1$ from $\scr{B}_t$ & $\frac{2 B(t) }{n} \left( \frac{M_{k_1 -1,k_2 +1}(t)}{n - X(t)} \cdot \bm{1}_{k_1 > 0} - \frac{M_{k_1,k_2}(t)}{n - X(t)} \right) \Gamma(t) - \frac{2B(t)}{n} \frac{A(t)}{2} \frac{C_{k_1,k_2}(t)}{n-X(t)} - \frac{2 B_{k_1,k_2}(t)}{n}$ \\ 
Path distance $1$ from $\scr{M}_t$ & $\frac{2 M(t)}{n} \left( \frac{M_{k_1 -1,k_2 +1}(t)}{n - X(t)} \cdot \bm{1}_{k_1 > 0} - \frac{M_{k_1,k_2}(t)}{n - X(t)} \right) \Gamma(t) - \frac{2M(t)}{n} \frac{A(t)}{2} \frac{C_{k_1,k_2}(t)}{n-X(t)} - \frac{2  M_{k_1,k_2}(t)}{n} $ \\ 
Path distance $1$ from $\scr{R}_t$  &  $\frac{2 R(t)}{n } \left( \frac{M_{k_1 -1,k_2 +1}(t)}{n - X(t)} \cdot \bm{1}_{k_1 >0} - \frac{M_{k_1,k_2}(t)}{n - X(t)} - \frac{C_{k_1,k_2}(t)}{n - X(t)} \right) \Gamma(t)$ \\
$\scr{Q}_t$ & $-\frac{(X(t)- 5 L(t))}{n} \frac{C_{k_1,k_2}(t)}{D(t)}$ \\
$\scr{R}_t$ & $-\frac{R(t)}{n} \frac{C_{k_1,k_2}(t)}{D(t)}$ \\
$\scr{B}_t$ & $-\frac{B_{k_1,k_2}(t)}{n} + \frac{B_{k_1 +1,k_2 -1}(t)}{n} \cdot \bm{1}_{k_2 > 0}$ \\
\hline
\end{tabular}
\end{table}

\begin{proof}[Proof Sketch of Table \ref{table:type_changes_min}]
Assume that $k_1, k_2$ are both non-zero, as this is the most involved case. We provide complete  proofs of row entries $2,5$ and $6$.

We begin with row entry $2$. Observe that $u_\tp$ lands on a vertex of an edge of $\scr{Y}_t$, say $yy'$, with probability $Y(t)/n$. At this point, a path augmentation occurs, and $yy'$ is added to the current path $P_t$, thus destroying the red and blue edges adjacent to $y$ and $y'$. We claim that $$\mb{E}[ \Delta C_{k_1,k_2}(t) \mid H_t, \{u_\tp \in V(\scr{Y}_t)\}] =\frac{2 M_{k_1 -1,k_2 +1}(t)}{n - X(t)} \cdot \bm{1}_{k_1 >0} -\frac{2 C_{k_1,k_2}(t)}{n - X(t)} - \frac{2 M_{k_1,k_2}(t)}{n - X(t)}.$$

Let us focus on the $-\frac{2 C_{k_1,k_2}(t)}{n - X(t)} - \frac{2 M_{k_1,k_2}(t)}{n - X(t)}$ term. In order to see this, fix an unsaturated vertex $c$ of type $(k_1,k_2)$. We shall prove that $c$ is \textit{destroyed} (i.e., removed from $C_{k_1,k_2}(t)$) with probability $(k_2 + 1)/(n - X(t)) + O(1/n^2)$, conditional on $H_t$ and $\{u_\tp \in V(\scr{Y}_t)\}$. By summing over all $c \in C_{k_1,k_2}(t)$ and using the fact that $k_2 \cdot C_{k_1,k_2}(t) = M_{k_1,k_2}(t)$, this yields the $-\frac{2C_{k_1,k_2}(t)}{n -X(t)} - \frac{2M_{k_1,k_2}(t)}{n -X(t)}$ term. The  $\frac{2M_{k_1 -1,k_2 +1}(t)}{n-X(t)}$ term follows similarly, where reclassifying unsaturated vertices of type $(k_1 -1, k_2 +1)$ to type $(k_1, k_2)$ causes $C_{k_1,k_2}(t)$ to increase.

Let us now prove the above claim regarding $c \in C_{k_1,k_2}(t)$, where we condition on $H_t$ and $u_\tp = y$ for some $yy' \in \scr{Y}_t$ in the below explanations. Suppose that $m_1, \ldots , m_{k_2}$ are the magenta neighbours of $c$ of type $(k_1,k_2 )$. By definition, $c m_i$ is coloured blue, and each $m_i$ also has a red edge $m_i x_i$ for $i=1,\ldots, k_2$. Observe that if either $y$ or $y'$ is equal to $c$, then $c$ will be added to the path (and thus destroyed). Similarly, if $x_i$ is added to the path, then the edge $m_i x_i$ is no longer red. In particular, $m_i$ is converted to a blue vertex, and so the type of $c$ is reclassified as $(k_1 +1, k_2 -1)$. In either case, $c$ is destroyed. Now, $x_1, \ldots ,x_{k_2}$ are distributed u.a.r. and independently amongst $[n] \setminus V(P_t)$, and so the vertices $c, x_1, \ldots ,x_{k_2}$ are distinct with probability $ \prod_{i=1}^{k_2}\left(1 - \frac{i}{n-X(t)}\right) = 1 - O(1/n)$, where we have used the fact that $k_2$ is a constant and $n - X(t) = \Omega(n)$. Moreover, $yy'$ is distributed u.a.r. and independently amongst $\binom{[n] \setminus V(P_t)}{2}$. Thus, conditional on the vertices $c, x_{1}, \ldots ,x_{k_2}$ being distinct, $c$ is destroyed with probability $$ \frac{ 2(k_2 +1)}{n-X(t)} -\frac{ \binom{k_2 +1}{2}}{ \binom{n -X(t)}{2}} =  \frac{2 (k_2 + 1)}{n - X(t)} - O(1/n^2).$$ As such, $c$ is destroyed with the claimed probability of $2(k_2 + 1)/(n - X(t)) + O(1/n^2)$.

Consider row entry $6$. We begin by deriving the expression:
\begin{equation} \label{eqn:dist_1_red_destroy}
\frac{2 R(t)}{n } \left(1 + \frac{Y(t)}{1- X(t)} \right) \left(  -\frac{M_{k_1,k_2}(t)}{n - X(t)} - \frac{C_{k_1,k_2}(t)}{n - X(t)} \right).
\end{equation}
First, condition on the event when $u_\tp$ lands at path distance one from some (red) vertex $r \in \scr{R}_t$. This occurs with probability $2 R(t)/n$. Let $rx$ be the unique red edge of $r$, where $x \in [n] \setminus V(P_t)$. We also condition on the event that $x =y$ for some $y y' \in \scr{Y}_t$, which occurs with probability $Y(t)/(1 - X(t))$. Observe that when these events occur, \ref{alg:degree_greedy} adds $x$ and $y'$ to the path $P_t$ via a path augmentation, and thus destroys the red and blue edges adjacent to $x$ and $y'$.

Fix a vertex $c \in C_{k_1,k_2}(t)$. Conditional on the above events, we claim that $c$ is destroyed with probability $2(k_2 +1)/(n-X(t)) + O(1/n^2)$. To see this, observe that $xy'$ is distributed u.a.r. amongst $\binom{[n] \setminus V(P_t)}{2}$. Thus, the same argument used to derive row entry $2$ applies in this case. By summing over all $c \in C_{k_1,k_2}(t)$ (and multiplying by $2R(t)/n$ and $Y(t)/(1-X(t))$), this yields the term 
$$
\frac{2 R(t)}{n } \frac{Y(t)}{1- X(t)} \left( -\frac{2M_{k_1,k_2}(t)}{n - X(t)} - \frac{2C_{k_1,k_2}(t)}{n - X(t)} \right).
$$
In order to complete the derivation of \eqref{eqn:dist_1_red_destroy}, we consider the event when $x$ is \textit{not} a vertex of an edge of $\scr{Y}_t$. In this case, we only need to account for the red and blue edges destroyed when $x$ is added to the path. This yields the following term:
$$
\frac{2 R(t)}{n } \left( 1-\frac{Y(t)}{1- X(t)} \right) \left( -\frac{M_{k_1,k_2}(t)}{n - X(t)} - \frac{C_{k_1,k_2}(t)}{n - X(t)} \right).
$$
The remaining term $\frac{2 R(t)}{n } \left(1 + \frac{Y(t)}{1- X(t)} \right)  \frac{M_{k_1 -1,k_2 +1}(t)}{n - X(t)} \cdot \bm{1}_{k_1 >0}$ missing from \eqref{eqn:dist_1_red_destroy} follows by a similar argument, where we destroy vertices of $C_{k_1 -1,k_2 +1}(t)$ to create new vertices of $C_{k_1,k_2}(t)$.
 
Let us now consider row entry $6$ when $u_\tp$ lands on a permissible vertex $x \in \scr{Q}_t$. Clearly, this event occurs with probability $|\scr{Q}_t|/n = (X(t) - 5L(t))/n$. In this case, the algorithm chooses $v_\tp$ u.a.r. amongst $D(t)$, the unsaturated vertices of minimum degree $q-1$, and colours the edge $x v_\tp$ blue. Thus, if we fix $c \in C_{k_1,k_2}(t)$, then $c$ will be chosen with probability $1/D(t)$ since $k_1 + k_2 = q-1$. In this case, $c$ gains a blue edge connected to a blue vertex, and thus will be reclassified as type $(k_1 + 1, k_2)$. Thus, each $c \in C_{k_1,k_2}(t)$ will be reclassified with probability $\frac{X(t) - 5L(t)}{n} \frac{1}{D(t)}$. By summing over all $c \in C_{k_1,k_2}(t)$, we get the $-\frac{(X(t)- 5 L(t))}{n} \frac{C_{k_1,k_2}(t)}{D(t)}$ term.
\end{proof}

Finally, when $k_{1} + k_{2} = q$, the expressions in rows $6$ and $7$ are modified slightly.

\begin{table}[H]
\caption{Expected Changes to $\Delta C_{k_1,k_2}(t)$ for $k_{1} + k_{2} = q$} \label{table:type_changes_max}
\begin{tabular}{|c|c|} 
\hline
$A \subseteq [n]$  & $\mb{E}[ \Delta C_{k_1,k_2}(t) \cdot \bm{1}_{u_\tp \in A} \mid H_t]$ \\
\hline
$U_t$ & $0$ \\
$V(\scr{Y}_t)$ & $ \frac{Y(t)}{n} \left( \frac{2 M_{k_1 -1,k_2 +1}(t)}{n - X(t)} \cdot \bm{1}_{k_1 >0} -\frac{2 C_{k_1,k_2}(t)}{n - X(t)} - \frac{2 M_{k_1,k_2}(t)}{n - X(t)} \right)$ \\
Path distance $1$ from $\scr{B}_t$ & $ \frac{2 B(t) }{n} \left( \frac{M_{k_1 -1,k_2 +1}(t)}{n - X(t)} \cdot \bm{1}_{k_1 > 0} - \frac{M_{k_1,k_2}(t)}{n - X(t)} \right) \Gamma(t) - \frac{2B(t)}{n} \frac{A(t)}{2} \frac{C_{k_1,k_2}(t)}{n-X(t)} - \frac{2 B_{k_1,k_2}(t) }{n}$ \\ 
Path distance $1$ from $\scr{M}_t$ & $\frac{2 M(t)}{n} \left( \frac{M_{k_1 -1,k_2 +1}(t)}{n - X(t)} \cdot \bm{1}_{k_1 > 0} - \frac{M_{k_1,k_2}(t)}{n - X(t)} \right) \Gamma(t) - \frac{2M(t)}{n} \frac{A(t)}{2} \frac{C_{k_1,k_2}(t)}{n-X(t)} - \frac{2  M_{k_1,k_2}(t)}{n}$ \\ 
Path distance $1$ from $\scr{R}_t$ & $\frac{2 R(t)}{n } \left( \frac{M_{k_1 -1,k_2 +1}(t)}{n - X(t)} \cdot \bm{1}_{k_1 >0} - \frac{M_{k_1,k_2}(t)}{n - X(t)} - \frac{C_{k_1,k_2}(t)}{n - X(t)} \right) \Gamma(t)$ \\
$\scr{Q}_t$ & $\frac{(X(t)- 5 L(t))}{n} \frac{C_{k_1 -1,k_2}(t)}{D(t)}$ \\
$\scr{R}_t$ & $\frac{R(t)}{n} \frac{C_{k_1,k_2 -1}(t)}{D(t)}$ \\
$\scr{B}_t$ & $-\frac{B_{k_1,k_2}(t)}{n} + \frac{B_{k_1 +1,k_2 -1}(t)}{n} \cdot \bm{1}_{k_2 > 0}$ \\
\hline
\end{tabular}
\end{table}

\begin{proof}[Proof Sketch of Table \ref{table:type_changes_max}]
The explanations for the case of $k_1 + k_2 = q$ are identical to those of $k_1 + k_2 = q-1$, except that vertices of type $(k_1,k_2)$ are created (instead of destroyed) when $u_\tp$ satisfies $u_\tp \in \scr{Q}_t$ or $u_\tp \in \scr{R}_t$.
\end{proof}

We are now ready to inductively prove Lemma~\ref{lem:inductive}. Firstly, when $q=0$, by definition $\tau_{0} =0$, and so $\sigma_{0} := 0$ trivially satisfies the conditions of Lemma~\ref{lem:inductive}. Let us now assume that $q\ge 1$ and for each of $0\le i\le q-1$ we have defined $\sigma_i$ and functions $x,r,y$ and $c_{j,h}$ on $[0, \sigma_{i}]$ for each $j,h \ge 0$ with  $j + h = i$, and Lemma~\ref{lem:inductive} holds for all $0\le i\le q-1$. We shall define $\sigma_q$ which satisfies  $\sigma_{q} > \sigma_{q-1}$, extend each $x, r,y$ and $c_{j,h}$ to $[0, \sigma_q]$, and define new functions $c_{k_1,k_2}$ on $[0,\sigma_q]$ for $k_1 + k_2 =q$. We shall then prove that these functions satisfy the assertion of Lemma~\ref{lem:inductive} with respect to $\tau_q$ and $\sigma_q$, which will complete the proof of the lemma.

Fix a sufficiently small constant $\eps > 0$, and define the bounded domain $\scr{D}_{\eps}$ as the points $(s,x,y,r, (c_{j,h})_{j + h \in \{q-1,q\}})$ such that
$$
\sigma_{q-1} - 1 <s < 3, |x| < 1 - \eps, |y|<2,|r| <2, |c_{j,h}| <2, \eps < \sum_{j,h: \, j+ h =q-1} c_{j,h} < 2.
$$
It will be convenient to define auxiliary functions to simplify our equations below. Specifically, set $b_{k_1,k_2} = k_1 \cdot c_{k_1,k_2}$ and $m_{k_1,k_2} := k_2 \cdot c_{k_1,k_2}$, as well as $b = \sum_{\substack{j,h: \\ j+ h \in \{q-1,q\} }} b_{j,h}$ and $m = \sum_{\substack{j,h: \\ j+ h \in \{q-1,q\}}} m_{j,h}$. Finally, set $\ell = b + m + r$ and $d = \sum_{\substack{j,h: \\ j+ h =q-1}} c_{j,h}$, as well as $\gamma = 1 + y/(1-x)$ and $a = 2y/(1-x)$. Observe the following system of differential equations:
\begin{eqnarray} \label{eqn:greedy_x_and_y}
x' &=& 2 (y + \ell \gamma) \\
y' &=& -2y + 2 \left(1 -x - y\right) - 2 a \ell, 
\end{eqnarray}
and
\begin{eqnarray} \label{eqn:greedy_r}
r'  &=& y \left(\frac{2m}{1-x} -\frac{2r}{1-x} \right) - 2 (b +m) \frac{r \gamma}{1 - x}  \nonumber \\
&&+ \sum_{\substack{j,h: \\ j+ h \in \{q-1,q\}}} 2 b_{j,h} \left( h + \frac{y}{1-x} \frac{m}{1 - x} \right) \nonumber \\
&&+ \sum_{\substack{j,h: \\ j+ h \in \{q-1,q\}}} 2 m_{j,h} \left( h + \frac{y}{1-x} \frac{m}{1 - x} \right)  \nonumber \\
&&-2r \left(1 + \frac{r \gamma}{1 - x} \right) + 2 r \frac{m \gamma}{1-x} - r +O(1/n).
\end{eqnarray}
If $k_1 + k_2 = q-1$, then:
\begin{eqnarray}\label{eqn:greedy_de_min_type}
c_{k_1,k_2}' &=& 
y\left( \frac{2 m_{k_1 -1,k_2 +1}}{1 - x} \cdot \bm{1}_{k_1 >0} -\frac{2 c_{k_1,k_2}}{1 - x} - \frac{2 m_{k_1,k_2}}{1 - x} \right) \nonumber \\
&& + 2 (b +m) \left( \frac{m_{k_1 -1,k_2 +1}}{1 - x} \cdot \bm{1}_{k_1 > 0} - \frac{m_{k_1,k_2}}{1 - x} \right) \gamma \nonumber \\
&& - 2(b +m) \frac{a}{2} \frac{c_{k_1,k_2}}{1-x} - 2 b_{k_1,k_2} - 2  m_{k_1,k_2} \nonumber\\
&& + 2 r \left( \frac{m_{k_1 -1,k_2 +1}}{1 - x} \cdot \bm{1}_{k_1 >0} - \frac{m_{k_1,k_2}}{1 - x} - \frac{c_{k_1,k_2}}{1 - x} \right) \gamma \nonumber \\
&&- (x- 5 \ell) \frac{c_{k_1,k_2}}{d} - r \frac{c_{k_1,k_2}}{d} + b_{k_1 +1,k_2 -1} \cdot \bm{1}_{k_2 > 0} - b_{k_1,k_2}.  
\end{eqnarray}
Otherwise, that is, if $k_1 + k_2 =q$, then:
\begin{eqnarray}\label{eqn:greedy_de_max_type}
c_{k_1,k_2}' &=& 
y\left( \frac{2 m_{k_1 -1,k_2 +1}}{1 - x} \cdot \bm{1}_{k_1 >0} -\frac{2 c_{k_1,k_2}}{1 - x} - \frac{2 m_{k_1,k_2}}{1 - x} \right) \nonumber \\
&& + 2 (b +m) \left( \frac{m_{k_1 -1,k_2 +1}}{1 - x} \cdot \bm{1}_{k_1 > 0} - \frac{m_{k_1,k_2}}{1 - x} \right) \gamma \nonumber \\
&& - 2(b +m) \frac{a}{2} \frac{c_{k_1,k_2}}{1-x} -2 b_{k_1,k_2} - 2  m_{k_1,k_2} \nonumber \\
&& + 2 r \left( \frac{m_{k_1 -1,k_2 +1}}{1 - x} \cdot \bm{1}_{k_1 >0} - \frac{m_{k_1,k_2}}{1 - x} - \frac{c_{k_1,k_2}}{1 - x} \right) \gamma \nonumber \\
&& + (x- 5 \ell) \frac{c_{k_{1} -1,k_2}}{d}  + r \frac{c_{k_1,k_2 -1}}{d} + b_{k_1 +1,k_2 -1} \cdot \bm{1}_{k_2 > 0} - b_{k_1,k_2}.
\end{eqnarray}
The right-hand side of each of the above equations is Lipchitz on the domain $\scr{D}_{\eps}$, as $d$ is bounded below by $\eps$, and $|x| < 1- \eps$. Define
\[
T_{\scr{D}_{\eps}}:=\min\{t \ge 0: (t/n, X(t)/n, Y(t)/n, R(t)/n, (C_{k_1,k_2}(t)/n)_{k_1 + k_2 \in \{q,q-1\}}) \notin \scr{D}_{\eps}\}.
\]
By the inductive assumption, the `Initial Condition' of Theorem~\ref{thm:differential_equation_method} is satisfied for some $\lambda = o(1)$ and values $\sigma_{q-1}, x(\sigma_{q-1}), r(\sigma_{q-1})$ and $c_{j,h}(\sigma_{q-1})$, where $c_{j,h}(\sigma_{q-1}):=0$ for $j +h = q$. Moreover, the `Trend Hypothesis' is satisfied with $\delta = O(1/n)$, by the expected differences of \eqref{eqn:degree_x_change}--\eqref{eqn:degree_c_max_change}. Finally, the `Boundedness Hypothesis' is satisfied with $\beta=O(\log n)$ and $\beta'=O(n^{-1})$ by Lemma~\ref{lem:boundedness_greedy}. Thus, by Theorem~\ref{thm:differential_equation_method}, for every $\xi>0$, a.a.s.\ $X(t)=nx(t/n)+o(n)$, $R(t)=n r(t/n)+o(n), Y(t) = n y(t/n) + o(n)$ and $C_{k_1,k_2}(t)=n c_{k_1,k_2}(t/n)+o(n)$ uniformly for all $\sigma_{q-1} n \le t \le (\sigma(\eps)-\xi) n$, where $x,r,y$ and $c_{k_1,k_2}$ are the unique solution to \eqref{eqn:greedy_x_and_y}--\eqref{eqn:greedy_de_max_type} with the above initial conditions, and $\sigma(\eps)$ is the supremum of $s$ to which the solution can be extended before reaching the boundary of $\scr{D}_{\eps}$. This immediately yields the following lemma.

\begin{lemma}[Concentration of \ref{alg:degree_greedy}'s Random Variables] \label{lem:concentration_random}
For every $\xi > 0$, a.a.s.\ for all  $\tau_{q-1} \le t \le (\sigma(\eps)-\xi) n$
and $k_1, k_2 \ge 0$ such that $k_1 + k_2 \in \{q,q-1\}$,
\[
    \max\{|X(t) -x(t/n) n|,|Y(t) - y(t/n)n|,|R(t) - r(t/n) n|, |C_{k_1,k_2}(t) - c_{k_1,k_2}(t/n) n| \} =  o(n). 
\]
\end{lemma}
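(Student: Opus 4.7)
The plan is to invoke the differential equation method (Theorem~\ref{thm:differential_equation_method}) in the form already referenced in the excerpt, so the task reduces to checking its three hypotheses on the extended domain $\scr{D}_{\eps}$ introduced just before the lemma. The right-hand sides of \eqref{eqn:greedy_x_and_y}--\eqref{eqn:greedy_de_max_type} are Lipschitz on $\scr{D}_{\eps}$ because the denominators $1-x$ and $d$ are uniformly bounded below by $\eps$ on this domain; hence the ODE system admits a unique solution, and $\sigma(\eps)$ is well-defined as the supremum of $s$ for which the solution stays in $\scr{D}_{\eps}$.

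First I would verify the \emph{Initial Condition} at time $\tau_{q-1}$. By the inductive hypothesis of Lemma~\ref{lem:inductive} applied at level $q-1$, a.a.s.\ $\tau_{q-1} \sim \sigma_{q-1} n$, together with $X(\tau_{q-1}) \sim x(\sigma_{q-1}) n$, $Y(\tau_{q-1}) \sim y(\sigma_{q-1}) n$, $R(\tau_{q-1}) \sim r(\sigma_{q-1}) n$, and $C_{j,h}(\tau_{q-1}) \sim c_{j,h}(\sigma_{q-1}) n$ for $j+h = q-1$. For the newly tracked variables with $k_1+k_2 = q$, by the definition of $\tau_{q-1}$ no unsaturated vertex has blue degree $q$ yet, so $C_{k_1,k_2}(\tau_{q-1}) = 0$, which matches the initial condition $c_{k_1,k_2}(\sigma_{q-1}) := 0$. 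Hence the initial values lie within $o(n)$ of the claimed starting point of the ODE, yielding some $\lambda = o(1)$ as required.

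Next I would confirm the \emph{Trend Hypothesis} by reading it off the expected-difference equations~\eqref{eqn:degree_x_change}--\eqref{eqn:degree_c_max_change}, whose right-hand sides, upon dividing through by $n$ and substituting the scaled variables, exactly match the right-hand sides of~\eqref{eqn:greedy_x_and_y}--\eqref{eqn:greedy_de_max_type} up to an additive error of $O(1/n)$; thus the trend error can be taken as $\delta = O(1/n)$. The \emph{Boundedness Hypothesis} is supplied directly by Lemma~\ref{lem:boundedness_greedy}: with probability $1-O(n^{-1})$, each one-step change in $X, Y, R$, and every $C_{k_1,k_2}$ with $k_1+k_2 \in \{q-1,q\}$ is $O(\log n)$, so we may set $\beta = O(\log n)$ and $\beta' = O(n^{-1})$.

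With all three hypotheses in place on $\scr{D}_{\eps}$, Theorem~\ref{thm:differential_equation_method} yields, for any $\xi > 0$, the uniform a.a.s.\ approximations
\begin{equation*}
X(t) = n\,x(t/n) + o(n),\ Y(t) = n\,y(t/n) + o(n),\ R(t) = n\,r(t/n) + o(n),\ C_{k_1,k_2}(t) = n\,c_{k_1,k_2}(t/n) + o(n),
\end{equation*}
valid throughout $\tau_{q-1} \le t \le (\sigma(\eps)-\xi)n$ and for all $(k_1,k_2)$ with $k_1+k_2 \in \{q-1,q\}$, which is precisely the conclusion of the lemma. No step is especially delicate; the only mildly subtle point is making sure that the bound $d \ge \eps$ built into $\scr{D}_{\eps}$ keeps the factor $c_{k_1,k_2}/d$ Lipschitz throughout the relevant time window, so that the stopping-time $T_{\scr{D}_{\eps}}$ is the only way to exit the domain and hence $\sigma(\eps)$ is the natural deterministic cutoff to which concentration can be pushed.
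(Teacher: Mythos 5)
Your proposal is correct and follows essentially the same route as the paper: the paper's own derivation of Lemma~\ref{lem:concentration_random} is precisely the verification that the Initial Condition (via the inductive hypothesis of Lemma~\ref{lem:inductive}, with $c_{j,h}(\sigma_{q-1})=0$ for $j+h=q$), the Trend Hypothesis (via \eqref{eqn:degree_x_change}--\eqref{eqn:degree_c_max_change} with $\delta=O(1/n)$), and the Boundedness Hypothesis (via Lemma~\ref{lem:boundedness_greedy} with $\beta=O(\log n)$) all hold on $\scr{D}_{\eps}$, followed by a direct application of Theorem~\ref{thm:differential_equation_method}. The additional justification you give that $C_{k_1,k_2}(\tau_{q-1})=0$ for $k_1+k_2=q$ is a sound and slightly more explicit version of what the paper leaves implicit.
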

As $\scr{D}_{\eps}\subseteq \scr{D}_{\eps'}$ for every $\eps>\eps'$, $\sigma(\eps)$ is monotonically nondecreasing as $\eps\to 0$, and so $\sigma_{q} := \lim_{\eps\to 0+}\sigma(\eps)$ exists. Moreover, the derivatives of the functions $x, r, y$ and $c_{k_1,k_2}$ are uniformly bounded on $(\sigma_{q-1}, \sigma_q)$, as $d = \sum_{\substack{j,h: \\ j+ h =q-1}} c_{j,h}$, so $c_{j,h}/d \le 1$ for $j+h = q-1$. This implies that the functions are uniformly continuous,
and so (uniquely) continuously extendable to $[\sigma_{q-1}, \sigma_q]$. The following limits thus exist:
\begin{eqnarray}
x(\sigma_{q})&:=&\lim_{s\to \sigma_{q}-} x(s) \label{eqn:x_limit} \\ 
y(\sigma_{q})&:=&\lim_{s\to \sigma_{q}-} y(s) \label{eqn:y_limit} \\ 
r(\sigma_{q})&:=&\lim_{s\to \sigma_{q}-} r(s)  \label{eqn:r_limit} \\ 
c_{k_1,k_2}(\sigma_{q})&:=&\lim_{s\to \sigma_{q}-} c_{k_1,k_2}(s). \label{eqn:c_limit}
\end{eqnarray}
The random variables $|R(t)/n|, |Y(t)/n|$ and $|C_{k_1,k_2}(t)/n|$ for $k_1 + k_2 \in \{q,q-1\}$ are bounded by $1$ for all $t$. Thus, when $t/n$ approaches $\sigma_q$, $X(t)/n$ approaches $1$, $t/n$ approaches $3$, or $D(t)/n:= \sum_{\substack{j,h :\\ j + h =q-1}} C_{j,h}(t)/n$ approaches $0$. Formally, we have the following proposition:
\begin{proposition}\label{prop:degree_boundary} For every $\eps>0$, there exists $\xi>0$ such that a.a.s.\ one of the following holds.
\begin{itemize}
    \item $D(t) <  \eps n$ for all $ t\ge (\sigma_q-\xi)n$;
    \item $X(t)>(1-\eps)n$ for all $ t\ge (\sigma_q-\xi)n$;
    \item $\sigma_q=3$.
\end{itemize}
\end{proposition}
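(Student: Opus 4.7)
The plan is to argue from the structure of the boundary $\partial\scr{D}_\eps$ together with Lemma~\ref{lem:concentration_random}. First, I would eliminate the constraints defining $\scr{D}_\eps$ that can never be first-hit as the ODE trajectory leaves the domain: the lower bound $s > \sigma_{q-1}-1$ is inactive since $s$ is time and therefore increasing, while the bounds $|y|, |r|, |c_{j,h}| < 2$ and the upper bound $d < 2$ are inactive because the underlying random variables $Y(t), R(t), C_{j,h}(t), D(t)$ each count a subset of $[n]$ and so lie in $[0,n]$, which by Lemma~\ref{lem:concentration_random} forces the ODE solutions $y, r, c_{j,h}, d$ to lie in $[0,1]$ uniformly on any closed subinterval of $[\sigma_{q-1}, \sigma_q)$. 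Hence for each $\eps > 0$, the first exit of $\scr{D}_\eps$ must be through $s = 3$, $x = 1 - \eps$, or $d = \eps$, corresponding respectively to alternatives (iii), (ii), (i) of the proposition.

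Next, taking a sequence $\eps_k \to 0$ and pigeonholing over these three possibilities, and using continuity of $x$ and $d$ on $[\sigma_{q-1}, \sigma_q]$ as established just before the proposition, one of the following limit statements must hold: $\sigma_q = 3$, $x(\sigma_q) = 1$, or $d(\sigma_q) = 0$. Given $\eps_0 > 0$ from the statement, I would pick $\xi > 0$ small enough that (by continuity) in the second case $x(s) > 1 - \eps_0/2$ for $s \in [\sigma_q - \xi, \sigma_q]$, and in the third case $d(s) < \eps_0/2$ for $s \in [\sigma_q - \xi, \sigma_q]$. Applying Lemma~\ref{lem:concentration_random} with the ODE parameter small enough that $\sigma(\eps) \ge \sigma_q - \xi/2$ then translates these bounds into $X(t)/n > 1 - \eps_0/2 - o(1)$ and $D(t)/n < \eps_0/2 + o(1)$ respectively, for $t \in [(\sigma_q - \xi)n, (\sigma_q - \xi/2)n]$ a.a.s.

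For the second case I would then extend the bound to all $t \ge (\sigma_q - \xi)n$ using the monotonicity $X(t+1) \ge X(t)$ (paths only grow), yielding $X(t) > (1-\eps_0)n$ uniformly and thus alternative (ii). The main obstacle is the third case, because unlike $X$, the random variable $D(t)$ is not monotone --- the uncoloring on lines~\ref{line:unsaturated_uncolour} and~\ref{line:saturated_uncolour} can strip a blue edge incident to an unsaturated vertex of current blue-degree $q$, temporarily moving it into $D(t)$. To obtain the desired uniform bound on $D(t)$ throughout $t \ge (\sigma_q - \xi)n$, I would concatenate concentration windows from the nested domains $\scr{D}_{\eps_k}$ for $\eps_k \to 0$; each such domain has bounded Lipschitz constants since the only delicate denominators $d$ and $1-x$ remain bounded away from zero on it, so Lemma~\ref{lem:concentration_random} extends across the concatenation up to $\sigma_q n$, on which $d(s) < \eps_0/2$ by continuity. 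This yields $D(t) < \eps_0 n$ uniformly, giving alternative (i) and completing the proof.
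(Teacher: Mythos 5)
The paper states Proposition~\ref{prop:degree_boundary} without an explicit proof, and your elimination of the non-binding constraints of $\scr{D}_{\eps}$, the pigeonhole over $\eps_k\to 0$, and the continuity/concentration-window treatment of case~(ii) correctly formalize the informal argument preceding the statement.

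However, the ``main obstacle'' you identify in case~(i) is not real: $D(t)=D_{q-1}(t)$ \emph{is} monotone nonincreasing throughout phase $q$ (and equals $0$ thereafter). The uncolourings on lines~\eqref{line:unsaturated_uncolour} and~\eqref{line:saturated_uncolour} only remove coloured edges incident to the one or two vertices being absorbed into $P_t$ at that step. A blue edge incident to an unsaturated vertex $z$ has its other endpoint $w\in\scr{B}_t\cup\scr{M}_t$, which already lies on the path; so that blue edge can only be uncoloured when $z$ itself is newly saturated, in which case $z$ is removed from $[n]\setminus V(P_t)$ and from every $D_j$. The only uncolouring that leaves the affected vertex unsaturated is the stripping of a magenta vertex's \emph{red} edge, which shifts a type $(k_1,k_2)$ to $(k_1+1,k_2-1)$ but leaves the total blue degree $k_1+k_2$ unchanged. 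Since blue degrees of unsaturated vertices never decrease, and since the minimum-blue-degree rule for choosing $v_\tp$ never creates a vertex of blue degree below $q-1$, $D_{q-1}(t)$ cannot increase. Consequently the concatenation-of-windows construction is both unnecessary and, as written, incomplete: it gives no information for $t > \sigma_q n$, and as $\eps_k\to 0$ the Lipschitz constant in Theorem~\ref{thm:differential_equation_method} blows up (through $1/d$), so the failure probabilities and error terms of the individual windows do not naively telescope. The correct fix is simply to invoke the same one-point-plus-monotonicity argument you already use for $X$ in case~(ii): establish $D(t^*)<\eps_0 n$ at a single $t^*=\lceil(\sigma_q-\xi)n\rceil$ inside a concentration window, and conclude $D(t)\le D(t^*)<\eps_0 n$ for all $t\ge t^*$.
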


The ordinary differential equations \eqref{eqn:greedy_x_and_y}--\eqref{eqn:greedy_de_max_type} again do not have an analytical solution. However, numerical solutions show that $\sigma_q < 3$, and $x(\sigma_q) < 1$.  Thus, after executing \ref{alg:degree_greedy} for $t = \sigma_{q} n + o(n)$ steps, there are $D(t) < \eps n$ vertices of type $q-1$ remaining for some $\eps = o(1)$. At this point, by observing the numerical solution (\ref{eqn:x_limit})--(\ref{eqn:c_limit}) at $\sigma_q$, we know that there exists some absolute constant $0 < p < 1$ such that $(X(t) - 5 L(t))/n \ge p$, where we recall that $L(t)$ counts the total number of coloured vertices at time $t$. Hence, at each step, some vertex of type $q-1$ becomes of type $q$ with probability at least $p$. Thus, by applying Chernoff's bound, one can show that a.a.s.\ after another $O(\eps n/p)=o(n)$ rounds, all vertices of type $q-1$ are destroyed. It follows that a.a.s.\ $|\tau_{q}/n - \sigma_q| =o(1)$, and so Lemma~\ref{lem:inductive} is proven.

\section{Proof of Theorem~\ref{thm:main_lower_bound}}

Suppose that $G_t$ is the graph constructed by a strategy after $t$ rounds, whose edges are $(u_i,v_i)_{i=1}^{t}$. When proving \Cref{thm:main_lower_bound}, it is convenient to refer to the \textit{random vertex} $u_i$ as a {\tt square}, and $v_i$ as a {\tt circle} so that every edge in $G_t$ joins a square with a circle. Recall that for a vertex $x \in [n]$, we say that the square $u_{i}$ \textit{lands} on $x$, or that $x$ is \textit{hit/covered} by $u_{i}$. We extend the analogous terminology to circles.

We begin with the following observations. Suppose that $H$ is a Hamiltonian cycle created in the process. Then,
\begin{enumerate}[label=(\subscript{O}{{\arabic*}})]
    \item $H$ uses exactly $n$ squares;     \label{eqn:exact_squares}
    \item $H$ uses at most $2$ squares on each vertex;  \label{eqn:two_squares}
    \item Suppose $(u_i,v_i)$ is an edge of $G_t$, and $v_i$ received at least two squares. Then, either $H$ uses at most one square on $v_i$, or $H$ does not contain the edge $(u_i,v_i)$.  \label{eqn:disallow_edge}
\end{enumerate}
The first two observations above are straightforward. For \ref{eqn:disallow_edge}, notice that if $H$ uses exactly $2$ squares on $v_i$, then these $2$ squares correspond to 2 edges in $H$ that are incident to $v_i$. Moreover, neither of these edges can be $(u_i,v_i)$, as $u_i$ is the square of $(u_i,v_i)$. Thus, the edge $(u_i,v_i)$ cannot be used by $H$ as $v_i$ has degree 2 in $H$.

Define $Z_x$ as the number of squares on vertex $x \in [n]$. The observation \ref{eqn:two_squares} above indicates the consideration of the random variable 
\[
Z=\sum_{x=1}^n \left( \bm{1}_{Z_x= 1} + 2\cdot \bm{1}_{Z_x\ge 2}\right)
= 2n - \sum_{x=1}^n \left( 2\cdot \bm{1}_{Z_x= 0} + \bm{1}_{Z_x= 1} \right),
\]
which counts the total number of squares that can possibly contribute to $H$, truncated at 2 for each vertex. Observation \ref{eqn:disallow_edge} above indicates the consideration of the following two sets of structures. Let $\W_1$ be the set of pairs of vertices $(x,y)$ at time $t$ such that
\begin{enumerate}
    \item[(a)] $x$ receives its first square at some step $i < t$, and $y$ receives the corresponding circle in the same step;
    \item[(b)] no more squares land on $x$ after step $i$;
    \item[(c)] at least two squares land on $y$ after step $i$.
\end{enumerate}
Let $\W_2$ be the set of pairs of vertices $(x,y)$ at time $t$ such that
\begin{enumerate}
    \item[(a)] $x$ receives a square at some step $i < t$, and $y$ receives the corresponding circle in the same step;
    \item[(b)] $x$ receives exactly two squares (the other square may land on $x$ either before or after step $i$); 
    \item[(c)] at least two squares land on $y$ after step $i$. 
\end{enumerate}

Note that for every $(x,y)\in \W_1$, at most 2 squares on $x$ and $y$ together can be used in $H$, although $x$ and $y$ together contribute $3$ to the value of $Z$. Similarly, for every $(x,y)\in \W_2$, at most 3 squares on $x$ and $y$ together can be used in $H$, although $x$ and $y$ together contribute $4$ to the value of $Z$. We prove the following upper bound on the total number of squares that can possibly contribute to the construction of $H$.

Let 
\begin{eqnarray*}
\T_1&=&\{((x_1,y_1),(x_2,y_2))\in \W_1\times \W_2:\  y_1=x_2\}\\
\T_2&=&\{((x_1,y_1),(x_2,y_2))\in \W_2\times \W_2:\ y_1=x_2\}.
\end{eqnarray*}
Let $W:=|\T_1|+|\T_2|$.

\begin{claim}\label{claim}
    The total number of squares contributing to $H$ is at most $Z-|\W_1|-|\W_2|+W$.
\end{claim}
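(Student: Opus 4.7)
The plan is to reformulate the claim as a statement about \emph{wasted slots} and then charge those slots to pairs in $\W_1 \cup \W_2$, using $W$ to control the over-counting. For each vertex $v$, let $H_v$ denote the number of squares on $v$ that $H$ uses, and let $w_v := \min(Z_v, 2) - H_v \ge 0$. Since each of the $n$ edges of $H$ contributes exactly one square, the total number of squares contributing to $H$ equals $\sum_v H_v = n$, and $\sum_v w_v = Z - n$. Hence the claim is equivalent to $\sum_v w_v \ge |\W_1| + |\W_2| - W$.

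Next, for each pair $P = (x,y) \in \W_1 \cup \W_2$ with witnessing step $i$, I would charge $P$ to a single vertex: to $y$ if $(u_i,v_i) \in H$, and to $x$ otherwise. In the first case, observation $(O_3)$ (applicable because $Z_y \ge 2$ for every pair in $\W_1 \cup \W_2$) gives $H_y \le 1$, so $w_y \ge 1$. In the second case, the specific square $u_i$ on $x$ contributes nothing to $H_x$, so $H_x < \min(Z_x, 2)$ and $w_x \ge 1$. Every pair is thus charged at least one unit of waste at the vertex it is assigned to.

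The crux is controlling the over-counting at vertices receiving multiple charges. For fixed $v$, let $a(v), b(v), c(v)$ count the charges at $v$ of the three types: (i) ``$v = y$ with edge in $H$'', (ii) ``$v = x$ from a $\W_1$-pair with edge not in $H$'', and (iii) ``$v = x$ from a $\W_2$-pair with edge not in $H$''. Since (ii) forces $Z_v = 1$ while (i) and (iii) each force $Z_v \ge 2$, type (ii) is incompatible with the other two at the same $v$; in the subcase $b(v) \ge 1$ we have $a(v) = c(v) = 0$ and $w_v \ge 1 \ge b(v)$, so there is no over-counting. When both $a(v)$ and $c(v)$ can be positive, a short analysis of the two $H$-edges incident to $v$ yields two simultaneous bounds on $H_v$: $H_v \le 2 - a(v)$, since each of the $a(v)$ pair-edges in $H$ at $v$ has its square off $v$; and $H_v \le 2 - c(v)$, since each of the $c(v)$ pairs selects a distinct, necessarily unused, square on $v$. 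Thus $w_v \ge \max(a(v), c(v))$, and the over-counting $a(v) + b(v) + c(v) - w_v$ at $v$ is at most $\min(a(v), c(v)) \le a(v) c(v)$.

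The final step is to identify $W$ as an upper bound on the total over-counting. Letting $\alpha(v) \ge a(v)$ denote the total number of pairs in $\W_1 \cup \W_2$ with $y = v$, and $\gamma(v) \ge c(v)$ the total number of pairs in $\W_2$ with $x = v$, the definitions of $\T_1$ and $\T_2$ give $W = \sum_v \alpha(v)\gamma(v) \ge \sum_v a(v) c(v)$. Summing the vertex-level bounds,
\[
|\W_1| + |\W_2| = \sum_v \bigl(a(v) + b(v) + c(v)\bigr) \le \sum_v w_v + \sum_v a(v) c(v) \le \sum_v w_v + W,
\]
which is the claim. The main technical obstacle I foresee is the justification of the two simultaneous bounds on $H_v$ in the mixed case: one must carefully separate the edge-level constraint coming from the $a(v)$ charges from the square-level constraint coming from the $c(v)$ charges, which relies on the fact that the squares on $v$ are never the squares of the pair-edges contributing to $a(v)$, and that distinct $\W_2$-pairs with $x = v$ correspond to distinct witnessing squares on $v$.
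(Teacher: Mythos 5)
Your proof is correct, and it takes a genuinely different and somewhat tighter route than the paper's. The paper's argument surveys all pairs of distinct overlapping structures in $\W_1\cup\W_2$, classified by how $\{x_1,y_1\}$ meets $\{x_2,y_2\}$, and for each pattern decides whether the naive offset of $-1$ per structure is or is not already correct; the troublesome patterns turn out to be exactly the chains $y_1=x_2$ recorded by $\T_1$ and $\T_2$, which are added back as $W$. You instead reformulate the claim as a lower bound on the total waste $\sum_v w_v$ and set up a deterministic charging scheme: each pair is charged to a single vertex, and the over-counting at $v$ is localized via the two independent degree-type bounds $H_v\le 2-a(v)$ (each of the $a(v)$ pair-edges in $H$ has $v$ as its circle end) and $H_v\le 2-c(v)$ (each of the $c(v)$ pairs identifies a distinct unused square on $v$), giving over-count at most $\min(a(v),c(v))\le a(v)c(v)$ per vertex, which summed over $v$ is dominated by $W=\sum_v\alpha(v)\gamma(v)$. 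The vertex-level accounting has a real advantage: it is fully local, makes the informally defined quantity $Z'$ in the paper's proof concrete, and cleanly handles an arbitrary number of structures meeting at a single vertex, which the paper's pairwise case analysis treats only implicitly. The paper's version, in turn, makes it more visible why the $y_1=x_2$ chains are the right correction terms while $y_1=y_2$ and $x_1=x_2$ overlaps cost nothing, a distinction your argument recovers abstractly through the identity $W=\sum_v\alpha(v)\gamma(v)$ and the $\min\le$ product step. Both reach the same bound, and the technical point you flag at the end (the two bounds on $H_v$ being simultaneously valid) is indeed the one place requiring care; your justification of it is sound.
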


{\em Proof of Claim~\ref{claim}.} By the discussions above, the total number of squares contributing to $H$ is at most $Z-|\W_1|-|\W_2|+Z'$ where $Z'$ accounts for double counting caused by distinct $(x_1,y_1), (x_2,y_2)\in \W_1\cup \W_2$ where $\{x_1,y_1\}\cap \{x_2,y_2\}\neq \emptyset$. We bound $Z'$ by $W$ by considering the following cases.
    
    {\em Case 1:}
    We first consider the case that $(x_1,y_1),(x_2,y_2)\in \W_1$ where $(x_1,y_1)\neq(x_2,y_2)$ and $\{x_1,y_1\}\cap \{x_2,y_2\}\neq \emptyset$. The only possible situation is $y_1=y_2$. In this case, only 2 squares out of the four squares on $x_1$, $x_2$ and $y_1$ that were counted by $Z$ can contribute to $H$; hence there is no double counting (i.e.\ the offset is -2 as correctly carried out in $-|\W_1|-|\W_2|$).

{\em Case 2:}
    The second case involves $((x_1,y_1),(x_2,y_2))\in \W_1\times \W_2$ where $\{x_1,y_1\}\cap \{x_2,y_2\}\neq \emptyset$. By definition, $x_1\neq x_2$ and $x_1\neq y_2$. We consider the two possible subcases:

{\em Case 2',}
    $x_2=y_1$: In this case, there are 5 squares on the three vertices $x_1$, $y_1=x_2$, and $y_2$ that were counted by $Z$, and at most 4 can contribute to $H$ by using the two squares on $y_1=x_2$ and the two squares on $y_2$. Thus, each such structure causes one double counting and this explains the term $|\T_1|$ in $W$;

{\em Case 2'',}
    $y_1=y_2$: In this case, there are 5 squares on the three vertices $x_1$, $x_2$, and $y_1=y_2$ that were counted by $Z$, and at most 3 can contribute to $H$. Thus, there is no double counting in this subcase.

{\em Case 3:}
    The third case involves $(x_1,y_1),(x_2,y_2)\in \W_2$ where $(x_1,y_1)\neq(x_2,y_2)$ and $\{x_1,y_1\}\cap \{x_2,y_2\}\neq \emptyset$.  We consider the two possible subcases:

{\em Case 3',}
    $y_1=x_2$ (and symmetrically $y_2=x_1$): in this subcase, at most 5 squares out of the 6 squares on the three vertices that were counted by $Z$ can be used to construct $H$ (by using two squres on $x_1$; two squares on $y_2$ and one square on $y_1=x_2$). Note also that for the two pairs of edges $(x_1,y_1)$ and $(x_2,y_2)$ considered here, we may always label them so that $y_1=x_2$. This explains the $|\T_2|$ term in $W$.
 
{\em Case 3'',}
    $y_1=y_2$ or $x_1=x_2$: at most 4 squares out of the 6 squares on the three vertices that were counted by $Z$ can be used to construct $H$; thus there is no double counting in this case.
\qed 

\medskip

The random variable $Z$ is well understood. From the limiting Poisson distribution of the number of squares in a single vertex, we immediately get that, a.a.s., $Z\sim (2-2e^{-s}-e^{-s}s)n$ for $s:=t/n$.

We will estimate the expectation of $|\W_1|, |\W_2|, |\T_1|, |\T_2|$ as well as the concentration of these random variables. However, concentration may fail if the semi-random process uses a strategy which places many circles on a single vertex. Intuitively, placing many circles on a single vertex is not a good strategy for quickly building a Hamiltonian cycle, as it wastes many edges. To formalise this idea, let $\mu:=\sqrt{n}$ (indeed, choosing any $\mu$ such that $\mu\to \infty$ and $\mu=o(n)$ will work). We say that a strategy for the semi-random process is $\mu$-\textit{well-behaved} up until step $t$, if no vertex receives more than $\mu$ circles in the first $t$ steps. In~\cite[Definition 3.2 -- Proposition 3.4]{gao2022perfect}, it was proven that it is sufficient to consider $\mu$-well-behaved strategies in the first $t = O(n)$ steps for establishing a lower bound on the number of steps needed to build a perfect matching. These definitions and proofs can be easily adapted for building Hamilton cycles in an obvious way. We thus omit the details and only give a high-level explanation below.

The key idea is that within $t = O(n)$ steps of any semi-random process, the number of vertices that received more than $\mu$ circles is at most $O(n/\mu)=o(n)$. Therefore, if a Hamiltonian cycle $C$ is built in $t$ steps, then the subgraph $H$ of $C$ induced by the set $S$ of vertices that received at most $\mu$ circles in $G_t$ is a collection of paths spanning all vertices in $S$ which must also contain $n-O(n/\mu)=(1-o(1))n$ edges. We call such a pair $(S,H)$ an \textit{approximate Hamiltonian cycle}. It follows from the above argument that it takes at least as long time to build a Hamiltonian cycle as to build an approximate Hamiltonian cycle. It is then easy to show by a coupling argument that if a strategy builds an approximate Hamiltonian cycle in $t = O(n)$ steps, then there exists a well-behaved strategy that builds an approximate Hamiltonian cycle in $t$ steps as well. Note that observations \ref{eqn:two_squares}--\ref{eqn:disallow_edge} hold for approximate Hamiltonian cycles, and \ref{eqn:exact_squares} holds for approximate Hamiltonian cycles with $n$ replaced by $(1-o(1))n$. Thus,  the following condition has to be satisfied at the time when an approximate Hamiltonian cycle is built: 
\[
Z-|\W_1|-|\W_2|+W\ge (1-o(1))n.
\]
We now estimate the sizes of $\W_1$, $\W_2$, $\T_1$, and $\T_2$ in the semi-random process when executing a well-behaved strategy $\scr{S}$. Crucially, the sizes of these sets do \textit{not} rely on the decisions made by $\scr{S}$. Recall that $(G^{{\mathcal S}}_{s})_{s\ge 0}$ denotes the sequence of graphs produced by ${\mathcal S}$. 

\begin{lemma} \label{lem:f}
Suppose ${\mathcal S}$ is $\mu$-well-behaved. For every $t=\Theta(n)$, a.a.s.\  the following  holds in $G^{{\mathcal S}}_{t}$, 
\[
Z-|\W_1|-|\W_2|+W\sim f(s)n,
\]
where $s:=t/n$ and $f(s)$ is defined as in Theorem~\ref{thm:main_lower_bound}.
\end{lemma}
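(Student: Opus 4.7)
The plan is to decompose each of $Z$, $|\W_1|$, $|\W_2|$, $|\T_1|$, $|\T_2|$ as a sum of indicator random variables indexed by single steps or pairs of steps, compute the asymptotic first moment of each via the Poisson approximation $Z_x \sim \mathrm{Poisson}(s)$, establish concentration about the mean, and finally verify that the resulting linear combination matches $f(s)n$.

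For the first moments, the key observation is that although $v_i$ is chosen adaptively by $\scr{S}$, the marginal probability of each indicator event is independent of the strategy. For example, $|\W_1| = \sum_{i=1}^t \bm{1}_{A_i}$, where $A_i$ asks that $u_j \neq u_i$ for all $j \neq i$ and that $v_i$ be hit by at least two of the future squares $u_{i+1}, \ldots, u_t$. Conditioning on $u_i = x$ and $v_i = y$, the future squares are i.i.d.\ uniform and independent of the past, so the conditional probability of the rest of $A_i$ depends only on $\tau = i/n$ and $s$, and not on the particular $y$ chosen by $\scr{S}$; summing out the strategy's choice yields a strategy-free expression. The same manoeuvre handles $|\W_2|$ (identical shape, but $u_i$ now required to receive exactly two total squares). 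For $|\T_1|$ and $|\T_2|$, the extra coupling $y_1 = x_2$ between two steps $i_1 < i_2$ contributes a clean factor $1/n$, because $u_{i_2}$ is uniform on $[n]$ and independent of $v_{i_1}$. Evaluating the resulting integrals in the Poisson limit gives explicit closed-form expressions.

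For concentration, the plan is to show each of the five quantities has variance $o(n^2)$ and then apply Chebyshev's inequality. The variance of $Z$ is $O(n)$ because the $Z_x$'s are only weakly correlated. For the other four sums, I would bound covariances between pairs of indicator terms by case analysis according to which of the underlying square/circle vertices coincide. Here the $\mu$-well-behaved hypothesis is crucial: it bounds by $\mu = \sqrt{n}$ the number of steps at which any fixed vertex serves as a circle, which in turn bounds the number of coincidence configurations contributing non-zero covariance. A careful accounting should give variance $o(n^2)$ for each of $|\W_1|, |\W_2|, |\T_1|, |\T_2|$.

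The main obstacle is the covariance bookkeeping for $|\T_1|$ and $|\T_2|$: their defining indicators already involve two step indices, so the variance sums range over quadruples of step indices with several patterns of shared vertices to enumerate. Fortunately, the $\mu$-well-behaved hypothesis should tame each pattern, since any fixed vertex appears as a ``shared'' coordinate in $O(\mu)$ configurations. Once the five asymptotic expectations are assembled, a purely algebraic calculation shows that $\E[Z - |\W_1| - |\W_2| + W]/n \to f(s)$, and combined with the concentration bounds this proves the lemma.
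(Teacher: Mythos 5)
Your proposal is correct and matches the paper's argument in essentially every particular: you decompose the counts into step-indexed indicators, note that the marginal law of those indicators is strategy-free because future squares are i.i.d.\ uniform and independent of the circle choices, prove concentration by bounding the variance (equivalently, showing $\E[X^2]\sim(\E X)^2$) with the $\mu$-well-behaved hypothesis taming the coincidence terms, and then assemble the asymptotic means to recover $f(s)$. The paper does exactly this, working out $|\W_1|$ in detail (the sum formula~\eqref{W1} and the second-moment computation splitting into ``all four vertices distinct'' versus ``$y_1=y_2$,'' the latter bounded by $\mu n = o(n^2)$) and indicating that $|\W_2|,|\T_1|,|\T_2|$ follow the same pattern.
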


\begin{proof}
Suppose ${\mathcal S}$ is $\mu$-well-behaved  until time $t = \Theta(n)$. We shall prove that a.a.s.\ the following properties hold for $G^{{\mathcal S}}_{t}$: 
\begin{align}
 |\W_1|&\sim n \sum_{i\le t} \frac{1}{n} \left(1-\frac{1}{n}\right)^{t} \sum_{i< j_1<j_2\le t} \frac{1}{n^2}\left(1-\frac{1}{n}\right)^{j_2}\label{W1} \displaybreak[0] \\
|\W_2|&\sim n \sum_{i_1< i_2\le t} \frac{1}{n^2}\left(1-\frac{1}{n}\right)^{t} \left(\sum_{i_1< j_1<j_2\le t} \frac{1}{n^2}\left(1-\frac{1}{n}\right)^{j_2} + \sum_{i_2<j_1<j_2\le t} \frac{1}{n^2}\left(1-\frac{1}{n}\right)^{j_2}\right) \label{W2} \\
 |\T_1|&\sim n \sum_{i\le t} \frac{1}{n} \left(1-\frac{1}{n}\right)^{t} \sum_{i< j_1<j_2\le t} \frac{1}{n^2}\left(1-\frac{1}{n}\right)^{t}\nonumber\\
 &\times\left(\sum_{j_1< h_1<h_2\le t} \frac{1}{n^2}\left(1-\frac{1}{n}\right)^{h_2} + \sum_{j_2<h_1<h_2\le t} \frac{1}{n^2}\left(1-\frac{1}{n}\right)^{h_2}\right)\label{T1} \\
 |\T_2|&\sim n \sum_{i_1<i_2\le t} \frac{1}{n^2} \left(1-\frac{1}{n}\right)^{t} \sum_{i_1<j_1<j_2\le t} \frac{1}{n^2}\left(1-\frac{1}{n}\right)^{t}\nonumber\\
 &\times\left(\sum_{j_1< h_1<h_2\le t} \frac{1}{n^2}\left(1-\frac{1}{n}\right)^{h_2} + \sum_{j_2<h_1<h_2\le t} \frac{1}{n^2}\left(1-\frac{1}{n}\right)^{h_2}\right)\nonumber\\
 & + \sum_{i_1<i_2\le t} \frac{1}{n^2} \left(1-\frac{1}{n}\right)^{t} \sum_{i_2< j_1<j_2\le t} \frac{1}{n^2}\left(1-\frac{1}{n}\right)^{t}\nonumber\\
 &\times\left(\sum_{j_1< h_1<h_2\le t} \frac{1}{n^2}\left(1-\frac{1}{n}\right)^{h_2} + \sum_{j_2<h_1<h_2\le t} \frac{1}{n^2}\left(1-\frac{1}{n}\right)^{h_2}\right).\label{T2}
\end{align}

We prove~(\ref{W1}) and briefly explain the expressions in~(\ref{W2})--(\ref{T2}) whose proofs are similar to that of~(\ref{W1}). Fix a vertex $x \in[n]$ and a square $u_i$ for $i\le t$. The probability that $u_i$ lands on $x$ in step $i$ is $1/n$. Condition on this event. The probability that $x$ receives no squares in any steps other than $i$ is $(1-1/n)^{t-1}\sim (1-1/n)^{t}$. Let $y$ be the vertex which the strategy chooses to pair $u_i$ with. Fix any two integers  $i<j_1<j_2\le t$, the probability that $y$ receives its first two squares at times $j_1$ and $j_2$ is asymptotically $n^{-2}(1-1/n)^{j_2}$. Summing over all possible values of $i,j_1,j_2$ and multiplying by $n$, the number of choices for $x$, gives $\ex |\W_1|$.

For concentration of $|\W_1|$ we prove that $\ex |\W_1|^2\sim (\ex |\W_1|)^2$. For any pair of $((x_1,y_1),(x_2,y_2))$ in $ \W_1\times \W_1$, either $x_1,y_1,x_2,y_2$ are pairwise distinct, or $y_1=y_2$. It is easy to see that the expected number of pairs where $x_1,y_1,x_2,y_2$ are pairwise distinct is asymptotically
\[
n^2 \sum_{\substack{i_1\le t\\ i_2\le t}} \frac{1}{n^2} \left(1-\frac{1}{n}\right)^{2t} \sum_{\substack{i_1\le j_1<j_2\le t\\
i_2\le h_1<h_2\le t
}} \frac{1}{n^4}\left(1-\frac{1}{n}\right)^{j_2+h_2} \sim (\ex|\W_1|)^2.
\]
The expected number of pairs where $y_1=y_2$ is at most $\mu n$ as there are most $n$ choices for $x_1$ and given $(x_1,y_1)$, there can be at most $\mu$ choices for $(x_2,y_2)$ since ${\mathcal S}$ is $\mu$-well-behaved. Since $\mu=o(n)$, $\mu n=o(n^2)$ which is $o((\ex|\W_1|)^2)$. Thus we have verified that $\ex |\W_1|^2\sim (\ex|\W_1|)^2$ and thus by the second moment method, a.a.s.\ $|\W_1|\sim \ex|\W_1|$. 

The proofs for the expectation and concentration of $|\W_2|$, $|\T_1|$ and $|\T_2|$ are similar. We briefly explain the expressions in~(\ref{W2})--(\ref{T2}):

In~(\ref{W2}), $i_1$ and $i_2$ denote the two steps at which $x$ receives a square. Since there are two squares on $x$, there are two choices of circles, namely $v_{i_1}$ and $v_{i_2}$. The two summations over $(j_1,j_2)$ accounts for the two choices of $v_{i_1}$ and $v_{i_2}$, depending on which is to be covered by two squares. Thus, $j_1$ and $j_2$ denote the steps where the first two squares on $v_{i_1}$ or $v_{i_2}$ arrive. 

In~(\ref{T1}), $i$ denotes the step where $x_1$ receives its only square; $j_1$ and $j_2$ denote the two steps where $y_1=x_2$ receives its two squares. Hence, there are two choices for $y_2$, and $h_1$ and $h_2$ denote the two steps of the first two squares $y_2$ receives.

In~(\ref{T2}), $i_1$ and $i_2$ denote the two steps where $x_1$ receives its two squares---hence there are two choices for $y_1$. Integers $j_1$ and $j_2$ denote the two steps where $y_1=x_1$ receives its two squares---hence there are two choices for $y_2$. Finally, $h_1$ and $h_2$ denote the steps where $y_2$ receives its first two squares. 

By applying the above equations, we deduce that for $t = sn$,
\begin{align*}
\displaybreak[0]
|\W_1|&\sim n e^{-s}\int_{0}^s dx\int_{x}^s dy_1 \int_{y_1}^s e^{-y_2} dy_2 = n e^{-s}\left(1-\frac{e^{-s}s^2}{2}-e^{-s}s-e^{-s}\right) \\
|\W_2|&\sim  ne^{-s}\int_{0}^s d x_1 \int_{x_1}^s d x_2 \left(\int_{x_1}^s d y_1 \int_{y_1}^s e^{-y_2} d y_2 + \int_{x_2}^s d y_1 \int_{y_1}^s e^{-y_2} d y_2  \right)\\
&= n e^{-s} \left(s-e^{-s}s^2 - \frac{e^{-s}s^3}{2} - e^{-s}s\right) \\
|\T_1|&\sim n e^{-2s} \int_{0}^s dx \int_{x}^s dy_1 \int_{y_1}^s d y_2 \left(\int_{y_1}^s d z_1 \int_{z_1}^s e^{-z_2} d z_2 + \int_{y_2}^s d z_1 \int_{z_1}^s e^{-z_2} d z_2 \right) \\
&= n e^{-2s}\left(-1+s - \frac{e^{-s}s^3}{3}-\frac{e^{-s}s^2}{2}-\frac{e^{-s}s^4}{8}+e^s\right)\\
|\T_2|&\sim n e^{-2s} \int_{0}^s dx_1 \int_{x_1}^s dx_2 \int_{x_1}^s dy_1 \int_{y_1}^s d y_2 \left(\int_{y_1}^s d z_1 \int_{z_1}^s e^{-z_2} d z_2 + \int_{y_2}^s d z_1 \int_{z_1}^t e^{-z_2} d z_2 \right)\\
&+n e^{-2s} \int_{0}^s dx_1 \int_{x_1}^s dx_2 \int_{x_2}^s dy_1 \int_{y_1}^s d y_2 \left(\int_{y_1}^s d z_1 \int_{z_1}^s e^{-z_2} d z_2 + \int_{y_2}^s d z_1 \int_{z_1}^s e^{-z_2} d z_2 \right)\\
&= n e^{-2s}\left(-s+s^2-e^{-s}s\left(\frac{s^4}{8}+\frac{s^3}{3}+\frac{s^2}{2}-1\right)\right).
\end{align*}
It follows now that $Z-|\W_1|-|\W_2|+W \sim f(s) n$ where we recall that
\[
f(s)=2+e^{-3s}(s+1)\left(1-\frac{s^2}{2}-\frac{s^3}{3}-\frac{s^4}{8}\right)+e^{-2s}\left(2s+\frac{5s^2}{2}+\frac{s^3}{2}\right)-e^{-s}\left(3+2s\right).
\]
This finishes the proof of the lemma.
\end{proof}

\begin{proof}[Proof of Theorem~\ref{thm:main_lower_bound}]
Recall that $\beta$ is the positive root of $f(s)=1$. Then, for every $\eps>0$, $Z-|\W_1|-|\W_2|+W\le (1-O(\eps)) n$  a.a.s.\ in $G^{{\mathcal S}}_{(\beta-\eps)n}$ for any $\mu$-well-behaved ${\mathcal S}$. Therefore, $C_{\texttt{HAM}}\ge \beta$.
\end{proof}

\section{Conclusion and Open Problems}

We have made significant progress on reducing the gap between the previous best upper and lower bounds on $C_{{\tt HAM}}$. That being said, we do not believe that any of our new bounds are tight. For instance, in the case of our lower bound, one could study the appearance of more complicated substructures which prevent any strategy from building a Hamiltonian cycle. One way to likely improve the upper bound would be to analyze an adaptive algorithm whose decisions are all made greedily. In the terminology of \ref{alg:degree_greedy}, when a (second) square lands on a blue vertex, choose the edge greedily chosen amongst unsaturated vertices of minimum blue degree (as opposed to u.a.r.). Unfortunately, it seems challenging to analyze this algorithm via the differential equation method.

Another direction is to understand which graph properties exhibit \textit{sharp thresholds}. Given property $\scr{P}$, the definition of $C_{\scr{P}}$ ensures that there exists a strategy $\scr{S}^*$ such that for all $\eps > 0$, $G_{t}^{\scr{S}^*}(n)$ satisfies $\scr{P}$ a.a.s.\ for $t \ge (C_{\scr{P}} + \eps)n$. On the other hand, $G_{t}^{\scr{S^*}}(n)$ may satisfy $\scr{P}$ with \textit{constant} probability for $t \le (C_{\scr{P}} - \eps)n$ without contradicting the definition of $C_{\scr{P}}$. For $\scr{P}$ to have a sharp threshold means that, for every strategy $\scr{S}$ and $\eps > 0$, if $t \le (C_{\scr{P}} - \eps)n$ then, a.a.s., $G_{t}^{\scr{S}}(n)$ does \textit{not} satisfy $\scr{P}$.  It is known that for basic properties, such as minimum degree $k \ge 1$, sharp thresholds do exist~\cite{ben2020semi}. Moreover, in \cite{ben-eliezer_fast_2020} it was shown that if $H$ is a spanning graph with maximum degree $\Delta = \omega( \log n)$, then the appearance of $H$ takes $(\Delta/2 + o(\Delta))n$ rounds, and $H$ (deterministically) cannot be constructed in fewer than $\Delta n/2$ rounds. However, in general it remains open as to whether or not a sharp threshold exists when $H$ is \textit{sparse} (i.e., $\Delta = O(\log n)$). Recently, the third author and Surya developed a general machinery for proving the existence of sharp thresholds in adaptive random graph processes~\cite{macrury2022sharp}. Applied to the semi-random graph process, they show that sharp thresholds exist for the property of being Hamiltonian and the property of containing a perfect matching. This provides some evidence that sharp thresholds \textit{do} exist when $\Delta = O(\log n)$, and we leave this as an interesting open problem. 

\bibliographystyle{plain}

\bibliography{refs.bib}

\appendix

\section{The Differential Equation Method}

In this section, we provide a self-contained \textit{non-asymptotic} statement of the differential equation method. The statement combines \cite[Theorem $2$]{warnke2019wormald}, and its extension \cite[Lemma $9$]{warnke2019wormald}, in a form convenient for our purposes, where we modify the notation of~\cite{warnke2019wormald} slightly. In particular, we rewrite \cite[Lemma $9$]{warnke2019wormald} in a less general form in terms of a stopping time $T$. We need only check the `Boundedness Hypothesis' (see below) for $0 \le t \le T$, which is exactly the setting of Lemmas \ref{lem:lipschitz_randomized} and \ref{lem:boundedness_greedy}. A similar theorem is stated in \cite[Theorem $2$]{bennett2023extending}.

Suppose we are given integers $a,n \ge 1$, a bounded domain $\scr{D} \subseteq \mb{R}^{a+1}$, and functions $(F_k)_{1 \le k \le a}$ where each $F_k: \scr{D} \to \mb{R}$ is $L$-Lipschitz-continuous on $\scr{D}$ for $L \ge 0$. Moreover, suppose that $R \in [1, \infty)$ and $S \in (0, \infty)$ are \textit{any} constants which satisfy $\max_{1 \le k \le a} |F_{k}(x)| \le R$ for all $x=(s,y_1,\ldots ,y_{a})\in \scr{D}$ and $0 \le s \le S$.

\begin{theorem}[Differential Equation Method, \cite{warnke2019wormald}] \label{thm:differential_equation_method} 
Suppose we are given $\sigma$-fields $\scr{F}_{0}  \subseteq \scr{F}_{1} \subseteq \cdots$, and for each $t \ge 0$, random variables $((Y_{k}(t))_{1 \le k \le a}$ which are $\scr{F}_t$-measurable. Define $T_{\scr{D}}$ to be the minimum $t \ge 0$ such that
\[
 (t/n, Y_{1}(t)/n, \ldots , Y_{k}(t)/n) \notin \scr{D}.
\]
Let $T \ge 0$ be an (arbitrary) stopping time\footnote{The stopping time $T\ge 0$ is \textit{adapted} to $(\scr{F}_t)_{t \ge 0}$, provided the event $\{\tau = t\}$ is $\scr{F}_t$-measurable for each $t \ge 0$.} adapted to $(\scr{F}_t)_{t \ge 0}$, and assume that the following conditions hold for $\delta, \beta, \gamma \ge 0$ and $\lambda \ge \delta \min\{S, L^{-1}\} + R/n$:
\begin{enumerate}
    \item[(i)] The `Initial Condition': For some $(0,\hat{y}_1,\ldots ,\hat{y}_a) \in \scr{D}$, \label{enum:initial_conditions}
    \[
    \max_{1 \le k \le a} |Y_{k}(0) - \hat{y}_k n| \le \lambda n.
    \] 
    \item[(ii)] The `Trend Hypothesis': For each  $t \le \min\{ T, T_{\scr{D}} -1\}$, \label{enum:trend_hypothesis}
    $$|\mb{E}[ Y_{k}(\tp) - Y_{k}(t) \mid \scr{F}_t] - F_{k}(t/n,Y_{1}(t)/n,\ldots ,Y_{a}(t)/n)| \le \delta.$$
    \item[(iii)] The `Boundedness Hypothesis': With probability $1 - \gamma$, \label{enum:boundedness_hypothesis}
    $$|Y_{k}(\tp) -  Y_{k}(t)| \le \beta,$$
    for each $t \le \min\{ T, T_{\scr{D}} -1\}$.
\end{enumerate}
Then, with probability at least $1 - 2a \exp\left(\frac{-n \lambda^2}{8 S \beta^2}\right) - \gamma$, we have that
\begin{equation}
    \max_{0 \le t \le \min\{T, \sigma n\}} \max_{1 \le k \le a} |Y_{k}(t) -y_{k}(t/n) n| < 3 \lambda \exp(L S)n,
\end{equation}
where $(y_{k}(s))_{1 \le k \le a}$ is the unique solution to the system of differential equations
\begin{equation} \label{eqn:general_de_system}
    y_{k}'(s) = F_{k}(s, y_{1}(s),\ldots ,y_{a}(s)) \quad \mbox{with $y_{k}(0) = \hat{y}_k$ for $1 \le k \le a$,}
\end{equation}
and $\sigma = \sigma(\hat{y}_1,\ldots ,\hat{y}_a) \in [0,S]$ is any choice of $\sigma \ge 0$ with the property that $(s,y_{1}(s),\ldots, y_{a}(s))$ has $\ell^{\infty}$-distance at least $3 \lambda \exp(LS)$ from the boundary of $\scr{D}$ for all $s \in [0, \sigma)$.
\end{theorem}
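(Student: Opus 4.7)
The plan is to follow the standard approach to the differential equation method, namely a combination of martingale concentration with a discrete Gronwall-type inequality. Since this theorem is essentially a restatement of results from \cite{warnke2019wormald}, the task reduces to verifying that our hypotheses align with the ones needed in that framework.

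First, I would define the error process $e_k(t) := Y_k(t) - y_k(t/n)n$ for $1 \le k \le a$ and set $E(t) := \max_k |e_k(t)|$. The one-step increment decomposes as $e_k(\tp) - e_k(t) = M_k(t) + D_k(t)$, where $M_k(t) := Y_k(\tp) - Y_k(t) - \mb{E}[Y_k(\tp) - Y_k(t) \mid \scr{F}_t]$ is a martingale difference and $D_k(t)$ collects the drift relative to the ODE. For $t < \min\{T, T_{\scr{D}}\}$, the Trend Hypothesis together with $y_k' = F_k$ (and the mean value theorem using $|F_k| \le R$) and the Lipschitz continuity of $F_k$ yield $|D_k(t)| \le L E(t)/n + \delta + O(R/n)$.

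Second, I would introduce the stopping time $\tau := \min\{t \ge 0 : E(t) > 3\lambda e^{LS} n\} \wedge T \wedge T_{\scr{D}} \wedge \lfloor Sn \rfloor$ and work with the stopped process. By the Boundedness Hypothesis, $|M_k(t)| \le 2\beta$ for $t < \tau$ outside an event of probability at most $\gamma$. Azuma--Hoeffding applied to $\sum_{s<t} M_k(s)$ over an interval of length at most $Sn$ shows that this sum exceeds $\lambda n$ in absolute value with probability at most $2\exp(-n\lambda^2/(8 S \beta^2))$; a union bound over the $a$ coordinates gives the failure probability claimed in the theorem. Conditional on this good event, summing the one-step decomposition telescopes to $E(t) \le E(0) + \lambda n + (\delta + O(R/n))t + (L/n) \sum_{s<t} E(s)$, and a discrete Gronwall argument together with $E(0) \le \lambda n$ and the hypothesis $\lambda \ge \delta \min\{S, L^{-1}\} + R/n$ yields $E(t) < 3\lambda e^{LS} n$ uniformly for $t \le \min\{T, \sigma n\}$.

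The main subtlety is self-consistency in the stopping argument: before $E(t)$ exceeds its threshold, one needs the Trend and Boundedness Hypotheses to be applicable, which requires $(t/n, Y(t)/n) \in \scr{D}$, i.e.\ $t < T_{\scr{D}}$. This is exactly where the condition on $\sigma$ enters---by requiring that the deterministic curve $(s, y_1(s), \ldots, y_a(s))$ stays at $\ell^{\infty}$-distance at least $3\lambda e^{LS}$ from $\partial \scr{D}$ on $[0, \sigma)$, we ensure that whenever $E(t) \le 3\lambda e^{LS} n$ and $t \le \sigma n$, we automatically have $t < T_{\scr{D}}$. Consequently $\tau$ can only be triggered by the threshold event or by $T$, and the Gronwall bound shows the threshold is never reached, establishing the theorem.
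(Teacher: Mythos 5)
The paper does not prove Theorem~\ref{thm:differential_equation_method}; it explicitly states that the theorem is a reformulation of \cite[Theorem~2]{warnke2019wormald} together with its extension \cite[Lemma~9]{warnke2019wormald}, so there is no internal proof for your sketch to be compared against. Your proposal correctly reconstructs the standard argument underlying Warnke's result --- the Doob decomposition of the error $e_k(t)$ into a martingale difference plus a drift term, Azuma--Hoeffding applied to the stopped martingale to get the $2a\exp(-n\lambda^2/(8S\beta^2))$ failure probability, a discrete Gronwall inequality to propagate the error bound forward, and the observation that the $3\lambda e^{LS}$-buffer built into the definition of $\sigma$ keeps the process inside $\scr{D}$ so that the Trend and Boundedness Hypotheses remain applicable before the threshold stopping time --- so your approach matches the cited proof at the level of detail you have given. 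One small caveat: the constant bookkeeping around the hypothesis $\lambda \ge \delta\min\{S,L^{-1}\}+R/n$ is slightly more delicate than your ``$(\delta+O(R/n))t$'' drift bound suggests (the $R/n$ term arises from a one-time discretization error in approximating $y_k'$ rather than an accumulated one, and the $\min\{S,L^{-1}\}$ reflects a case split absorbed by the Gronwall exponential), so a fully rigorous write-up would need to track these constants more carefully; but the structure of your argument is sound and consistent with the source.
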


\begin{remark}
Standard results for differential equations guarantee that \eqref{eqn:general_de_system} has a unique solution $(y_{k}(s))_{1\le k \le a}$ which extends arbitrarily close to the boundary of $\scr{D}$. 
\end{remark}

\end{document}